\begin{document}

\newtheorem{theorem}{Theorem}[section]
\newtheorem{prop}[theorem]{Proposition}
\newtheorem{lemma}[theorem]{Lemma}
\newtheorem{cor}[theorem]{Corollary}
\newtheorem{prob}[theorem]{Problem}
\newtheorem{defn}[theorem]{Definition}
\newtheorem{notation}[theorem]{Notation}
\newtheorem{fact}[theorem]{Fact}
\newtheorem{conj}[theorem]{Conjecture}
\newtheorem{claim}[theorem]{Claim}
\newtheorem{example}[theorem]{Example}
\newtheorem{rem}[theorem]{Remark}
\newtheorem{assumption}[theorem]{Assumption}
\newtheorem{scholium}[theorem]{Scholium}

\newcommand{\map}{\rightarrow}
\newcommand{\C}{\mathcal C}
\newcommand\AAA{{\mathcal A}}
\def\AA{\mathcal A}

\def\L{{\mathcal L}}
\def\al{\alpha}
\def\A{{\mathcal A}}

\newcommand\GB{{\mathbb G}}
\newcommand\BB{{\mathcal B}}
\newcommand\DD{{\mathcal D}}
\newcommand\EE{{\mathcal E}}
\newcommand\FF{{\mathcal F}}
\newcommand\GG{{\mathcal G}}
\newcommand\HH{{\mathbb H}}
\newcommand\II{{\mathcal I}}
\newcommand\JJ{{\mathcal J}}
\newcommand\KK{{\mathcal K}}
\newcommand\LL{{\mathcal L}}
\newcommand\MM{{\mathcal M}}
\newcommand\NN{{\mathbb N}}
\newcommand\OO{{\mathcal O}}
\newcommand\PP{{\mathcal P}}
\newcommand\QQ{{\mathbb Q}}
\newcommand\RR{{\mathbb R}}
\newcommand\SSS{{\mathcal S}}
\newcommand\TT{{\mathcal T}}
\newcommand\UU{{\mathcal U}}
\newcommand\VV{{\mathcal V}}
\newcommand\WW{{\mathcal W}}
\newcommand\XX{{\mathcal X}}
\newcommand\YY{{\mathcal Y}}
\newcommand\ZZ{{\mathcal Z}}
\newcommand\hhat{\widehat}
\newcommand\flaring{{Corollary \ref{cor:super-weak flaring} }}
\def\Ga{\Gamma}
\def\Z{\mathbb Z}

\def\diam{\operatorname{diam}}
\def\dist{\operatorname{dist}}
\def\hull{\operatorname{Hull}}
\def\id{\operatorname{id}}
\def\Im{\operatorname{Im}}

\def\barycenter{\operatorname{center}}

\def\length{\operatorname{length}}
\newcommand\RED{\textcolor{red}}
\newcommand\BLUE{\textcolor{blue}}
\newcommand\GREEN{\textcolor{green}}
\def\mini{\scriptsize}

\def\acts{\curvearrowright}
\def\embed{\hookrightarrow}

\def\ga{\gamma}
\newcommand\la{\lambda}
\newcommand\eps{\epsilon}
\def\geo{\partial_{\infty}}
\def\bhb{\bigskip\hrule\bigskip}

\title{Propagating quasiconvexity from fibers}

\author{Mahan Mj}
\address{School of Mathematics, Tata Institute of Fundamental Research, 1 Homi Bhabha Road, Mumbai 400005, India}

\email{mahan@math.tifr.res.in}
\email{mahan.mj@gmail.com}
\urladdr{http://www.math.tifr.res.in/~mahan}

\author{Pranab Sardar}
\address{Department of Mathematical Sciences,
	Indian Institute of Science Education and Research Mohali,
	Knowledge City, Sector 81, SAS Nagar,
	Punjab 140306,  India}

\email{psardar@iisermohali.ac.in}
\urladdr{https://sites.google.com/site/psardarmath/}

\thanks{MM is   supported by  the Department of Atomic Energy, Government of India, under project no.12-R\&D-TFR-14001.
	MM is also supported in part by a Department of Science and Technology JC Bose Fellowship, CEFIPRA  project No. 5801-1, a SERB grant MTR/2017/000513, and an endowment of the Infosys Foundation via the Chandrasekharan-Infosys Virtual Centre for Random Geometry.
	This material is based upon work supported by the National Science Foundation
	under Grant No. DMS-1928930 while MM participated in a program hosted
	by the Mathematical Sciences Research Institute in Berkeley, California, during the
	Fall 2020 semester. PS was partially supported by DST INSPIRE grant DST/INSPIRE/04/2014/002236 and DST MATRICS grant 
MTR/2017/000485 of the Govt of India. }

\subjclass[2010]{20F65, 20F67 (Primary), 30F60(Secondary) }

\keywords{quasiconvex subgroup, Cannon-Thurston map, lamination, convex cocompact subgroup, metric bundle, mapping class group, $Out(F_n)$}

\date{\today}

\begin{abstract}  Let
$1 \to K \longrightarrow G \stackrel{\pi}\longrightarrow Q$
be an exact sequence of hyperbolic groups. Let $Q_1 < Q$ be a quasiconvex
subgroup and let $G_1=\pi^{-1}(Q_1)$. Under  relatively mild conditions (e.g.\ if $K$ is a closed surface group or a free group and $Q$ is convex cocompact), we show that infinite index quasiconvex subgroups of $G_1$ are quasiconvex   in $G$. Related results are proven for metric bundles, developable complexes of groups and graphs of groups.
\end{abstract}
\maketitle

%\tableofcontents

\section{Introduction} The aim of this paper is to provide evidence in favor of the following Scholium.
\begin{scholium}\label{schol-propagate}
For an exact sequence $$1 \to K \longrightarrow G \stackrel{\pi}\longrightarrow Q\to 1$$  of hyperbolic groups, and more generally for hyperbolic metric bundles, quasiconvexity in fibers propagates to quasiconvexity in subbundles.
\end{scholium}
There are two specific examples in which the exact sequence in Scholium \ref{schol-propagate} has been studied:

\begin{enumerate}
\item When $K=\pi_1(\Sigma)$ is the fundamental group of a closed surface of genus $g>1$, and $Q<MCG(\Sigma)$. Here, the 
exact sequence comes from the Birman exact sequence
$$1 \to \pi_1(\Sigma) \longrightarrow MCG(\Sigma.\ast) \stackrel{\pi}\longrightarrow MCG(\Sigma)\to 1,$$ where $MCG(\Sigma.\ast)$ denotes the mapping class group of 
$\Sigma$ equipped with a marked point $\ast$.
\item When $K=F_n$ is the free group on $n$ generators ($n>2$), and $Q<Out(F_n)$. Here, the 
exact sequence comes from the Birman exact sequence for free groups
$$1 \to F_n \longrightarrow Aut(F_n) \stackrel{\pi}\longrightarrow Out(F_n)\to 1.$$
\end{enumerate}
 
The results in this paper are in the same vein as
 a series of theorems starting with work of Scott-Swarup \cite{scottswar}, followed by several authors \cite{mitra-pams,dkl,kl15,dkt,dt1,mahan-rafi,ghosh}. In all these papers, the setup was as follows: Consider
  an exact sequence of hyperbolic groups as in Scholium \ref{schol-propagate}. Then, under relatively mild conditions, 
 it was shown that an infinite index quasiconvex subgroup of $K$ is quasiconvex in the bigger group $G$.
 The purpose of this paper is to extend these results in a different direction. Let $Q_1 < Q$ be a quasiconvex
 subgroup and let $G_1=\pi^{-1}(Q_1)$. The main results of this paper show similarly that,
 under  some conditions, infinite index quasiconvex subgroups of $G_1$ are quasiconvex   in $G$. In other words the distortion \cite[Chapter 4]{gromov-ai} (see Definition \ref{defn-disto} below) is entirely captured by the fiber group. The earlier papers cited above all treat the case with $Q_1=\{1\}$. 
 
The simplest example that illustrates this
 is the following. Let $M_1, M_2$ be two
closed hyperbolic 3-manifolds fibering over the circle with the same topological fiber $F$. Suppose further that the 3-complex $M_1 \cup_F M_2 =M$ obtained by gluing $M_1, M_2$ along the fiber $F$ has
a hyperbolic fundamental group given by $G=\pi_1(M_1)\ast_{\pi_1(F)} \pi_1(M_2)$.
Then a prototypical theorem of this paper  shows that 
an infinite index quasiconvex subgroup of $\pi_1(M_1)$ is quasiconvex in $G$.
Analogous results are also shown for $K$ a free group.
Note that, using work of \cite{farb-coco,kl-coco,hamen,dt1}, these results can equivalently be formulated in terms of   quasiconvex
subgroups $Q_1$ of convex cocompact subgroups of the mapping class group $MCG(\Sigma)$
or the outer automorphism group $Out(F_n)$ of the free group as follows
(see Theorems \ref{surface fiber} and \ref{freefiber}):

\begin{theorem}\label{thm-intro}
 Let $$1 \to K \longrightarrow G \stackrel{\pi}\longrightarrow Q\to 1$$ 
 be an exact sequence of hyperbolic groups, 
 where $K$ is either  $\pi_1(\Sigma)$, 
with $\Sigma$ a closed surface of genus at least $2$; or a finitely generated free group $F_n$, $n>2$. Let $Q$ be   an infinite convex cocompact subgroup
of respectively $MCG(\Sigma)$ or $Out(F_n)$. 
Let $Q_1< Q$ be a qi embedded subgroup and $G_1=\pi^{-1}(Q_1)$. 
	Suppose $H<G_1$ is an infinite index, quasiconvex subgroup. Then $H$ is quasiconvex  in $G$.
\end{theorem}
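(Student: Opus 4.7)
My approach is to insert $G_1$ as an intermediate stratum and deduce quasiconvexity of $H$ in $G$ from the chain $H<G_1<G$: $H$ is quasiconvex in $G_1$ by hypothesis, while the main work is to show $G_1$ is quasiconvex in $G$. Once both links are in place and all three groups are hyperbolic, transitivity of quasiconvexity in hyperbolic groups finishes the argument.

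To set up the chain, I first establish hyperbolicity of $G_1$. Since $Q_1$ is qi embedded in $Q$ and $Q$ is convex cocompact in $MCG(\Sigma)$ (resp.\ $Out(F_n)$), the subgroup $Q_1$ is itself convex cocompact in the ambient mapping class (resp.\ outer automorphism) group. The Farb--Mosher, Kent--Leininger, Hamenst\"adt, and Dowdall--Taylor characterizations cited in the introduction then identify convex cocompactness of the quotient with hyperbolicity of the extension, so that $G_1=\pi^{-1}(Q_1)$ is hyperbolic.

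For the central step I would view $G$ as a hyperbolic metric bundle over $Q$ with fiber qi to $K$, inside which $G_1$ is precisely the subbundle lying over $Q_1$. Given a $G$-geodesic $\gamma$ with endpoints in $G_1$, its projection to $Q$ is a quasigeodesic ending in $Q_1$, and quasiconvexity of $Q_1$ keeps this projection close to $Q_1$. Lifting back via Mitra's ladder construction and exploiting the flaring / exponential divergence of $K$-fibers supplied by convex cocompactness of $Q$, one should be able to assemble a path in $G_1$ that fellow-travels $\gamma$, yielding quasiconvexity of $G_1$ in $G$. The infinite index hypothesis on $H$ in $G_1$ plays the same role it plays in the classical Scott--Swarup--Mitra case $Q_1=\{1\}$: it prevents the limit set of $H$ from exhausting $\partial G_1$, which is the key non-degeneracy needed for any Cannon--Thurston / ladder style argument aimed at quasiconvexity of $H$ in the larger group $G$.

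The principal obstacle is the subbundle quasiconvexity step. Abstractly, one wants: for a hyperbolic metric bundle $X \to B$ with flaring fibers, the preimage of a quasiconvex subset of $B$ is quasiconvex in $X$. Turning the flaring condition into a quantitative estimate that rules out long excursions of $X$-geodesics into ``wrong'' fibers, in the presence of a possibly large fiber $K$ which is itself highly distorted in $G$, is where the bulk of the technical work will lie and where the ladder/Cannon--Thurston machinery must be pushed beyond the base case $Q_1=\{1\}$ treated in the earlier literature.
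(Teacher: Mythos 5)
Your proof hinges on the claim that $G_1$ is quasiconvex in $G$, after which you invoke transitivity. That middle link is false in every nontrivial case. Whenever $[Q:Q_1]=\infty$, the subgroup $G_1=\pi^{-1}(Q_1)$ is \emph{not} quasiconvex in $G$: the fiber $K$ is an infinite normal subgroup of $G$ contained in $G_1$, so $\partial G=\Lambda_G(K)\subseteq \Lambda_G(G_1)$, whereas an infinite-index quasiconvex subgroup of a hyperbolic group has limit set a proper closed subset of the boundary (Lemma~\ref{limit set lemma}). Equivalently, the inclusion $G_1\map G$ admits a genuinely non-injective Cannon--Thurston map whose lamination is carried by the fibers (Theorem~\ref{thm-subbdlhyp}, Theorem~\ref{pullback lamination}); if $G_1$ were qi embedded this lamination would be empty. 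The same observation explains why the infinite-index hypothesis on $H$ in $G_1$ is indispensable: taking $H=G_1$ already defeats any argument that tries to push quasiconvexity from $G_1$ up to $G$. So the ``subbundle quasiconvexity step'' you identify as the principal obstacle is not merely hard --- it is false, and no amount of flaring or ladder technology will establish it.

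The argument has to go around $G_1$ rather than through it. The route taken in the paper is: if $H$ is quasiconvex and of infinite index in $G_1$ but distorted in $G$, then a leaf of the Cannon--Thurston lamination $\LL_G(H)$ exists and, by the structure theory of the lamination for the pair $(G_1,G)$, is carried by a fiber $K$; a coarse topological argument (Proposition~\ref{general thm}, via Corollary~\ref{general cor}) then extracts a finitely generated, infinite-index subgroup $K_1<H\cap K$ that is distorted in $G$ (Theorem~\ref{general fiber}). Convex cocompactness of $Q$ enters only at the last step, through the theorems of Dowdall--Kent--Leininger and Dowdall--Taylor/Mj--Rafi (Theorems~\ref{thm-dklmr1} and~\ref{thm-free}): every finitely generated infinite-index subgroup of the fiber is quasiconvex in $G$, contradicting the existence of $K_1$. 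Your proposal never invokes this input, and without it the convex cocompactness hypothesis does no work; indeed Proposition~\ref{prop-ctreg} shows the theorem fails without it precisely because such distorted fiber subgroups $K_1$ then exist.
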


The main new technical tool used in the proof of Theorem
\ref{thm-intro} is the existence and structure
of the Cannon-Thurston map for the pair $(G_1,G)$ established in \cite{mbdl2}.
We show also (Proposition \ref{prop-ctreg}) that in the absence of convex cocompactness, Theorem \ref{thm-intro} fails quite dramatically due to the existence of subgroups $K_1$ of $K$ that are quasiconvex in $K$ and $G_1$,
but not in $G$. The main theorems of the paper are more
general, and make the content of Scholium \ref{schol-propagate} precise in three contexts: 
\begin{enumerate}
\item exact sequences of groups (Theorem \ref{general fiber}),
\item complexes of groups  (Theorem \ref{thm-cx}), and 
\item general graphs of groups
(Theorem \ref{thm-graph}).
\end{enumerate}

The underlying geometric structure in all three cases is given by
a map $\pi:\XX \to \BB$ from a total space $\XX$ to a base space $\BB$. In the first two cases, $\pi:\XX \to \BB$ is a metric graph bundle (see Definition \ref{defn-mgbdl}), which may be thought of as a natural analog of a bundle in the context of coarse geometry. In the third case,  $\pi:\XX \to \BB$ is a tree of spaces (cf.\ \cite{BF}). We assume that both $\XX, \BB$ are hyperbolic
and consider a qi embedded subspace $\BB_1 \subset \BB$. The pullback
$\XX_1 =  \pi^{-1} (\BB_1)$ gives a subbundle or a subtree of spaces. In the cases we are interested in, $\XX$ corresponds to a group $G$, and $\XX_1$
to a subgroup $G_1$ of $G$ (a standard motif of geometric group theory via
the Milnor-{\v S}varc lemma
\cite[Proposition 8.19]{bridson-haefliger} for instance).
Then,
one looks at a subgroup $H<G_1$ such that
\begin{enumerate}
\item $H$ is qi embedded in $G_1$, 
\item $H$ is not qi embedded in $G$, or equivalently, $H$ is distorted in $G$
(see Definition \ref{defn-disto}).
\end{enumerate}
The main aim of all three Theorems \ref{general fiber}, 
\ref{thm-cx}, and \ref{thm-graph} is to identify the source of distortion
as coming from a fiber $\FF_v = \pi^{-1} (v)$ of $\pi:\XX \to \BB$. Again,
in all three cases, the fiber $\FF_v $ corresponds to a subgroup $K<G$ (in Scholium \ref{schol-propagate}, it is the normal subgroup). The output of 
Theorems \ref{general fiber}, 
\ref{thm-cx}, and \ref{thm-graph} is a finitely generated subgroup of 
$H \cap K$ that is distorted in $G$. Thus, the source of distortion of $H$ in $G$ is the existence of a finitely generated distorted subgroup in the intersection
of $H$ with the
 fiber group.

\section{A coarse topological fact}\label{sec-coarsetop} The main aim of this section is to prove Proposition \ref{general thm}, which is a  generalization in the geometric group theory
context of the following simple 3-manifold fact: 
\begin{fact}\label{fact}{\rm 
Let $M$ be a closed hyperbolic 3-manifold fibering over the circle with fiber $F$. Let $\Sigma \subset M$
be an immersed incompressible quasi-Fuchsian 
surface. Let $G=\pi_1(M), K=\pi_1(F), H=\pi(\Sigma)$. Let $M_F$ denote the
cover of $M$ corresponding to $K<G$, let $F_0$ denote a lift of $F$ to $M_F$ and $\Sigma_F$ denote a lift of $\Sigma$ to $M_F$. 
Then, given any  finitely generated subgroup $K_1 < K \cap H$, there is a compact subsurface $\Sigma_0
\subset \Sigma_F$, necessarily contained in a finite neighborhood of $F_0$, such that $K_1 < i_\ast (\pi_1(\Sigma_0))$, where $i: \Sigma_0 \to M_F$ denotes the inclusion map.}
\end{fact}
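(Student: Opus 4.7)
The plan is in two parts. First, explicitly construct $\Sigma_0$ by choosing loops in $\Sigma_F$ that represent a finite generating set of $K_1$ and then enlarging them to a compact subsurface. Second, observe that any compact subsurface of $\Sigma_F$ is automatically contained in a finite neighborhood of $F_0$ inside $M_F$, since both are compact.

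For the construction, fix a basepoint $\tilde x_0 \in \Sigma_F$ with image $x_0 = i(\tilde x_0) \in M_F$. By covering space theory, using $\pi_1$-injectivity of $\Sigma \hookrightarrow M$ (incompressibility), $i_\ast$ identifies $\pi_1(\Sigma_F, \tilde x_0)$ with $K \cap H$ as a subgroup of $\pi_1(M_F, x_0) = K$. Pick a finite generating set $k_1,\dots,k_n$ of $K_1$. Since $K_1 \leq K \cap H$, each $k_j$ is represented by a loop $\tilde\beta_j \subset \Sigma_F$ based at $\tilde x_0$. Let $W = \bigcup_{j=1}^n \tilde\beta_j$, a compact subset of $\Sigma_F$, and choose a compact connected subsurface $\Sigma_0 \subset \Sigma_F$ containing $W$ (for instance, a regular neighborhood of $W$ together with $\tilde x_0$). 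By construction, $\pi_1(\Sigma_0, \tilde x_0)$ contains $k_1,\dots,k_n$, and therefore $K_1 < i_\ast(\pi_1(\Sigma_0))$ as subgroups of $G$.

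For the location claim, $\Sigma_0$ is compact, hence so is its image $i(\Sigma_0) \subset M_F$. Since $F_0 \cong F$ is also compact, the continuous function $p \mapsto \dist_{M_F}(p, F_0)$ attains a finite maximum $R$ on $i(\Sigma_0)$, giving $i(\Sigma_0) \subset N_R(F_0)$.

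The fact itself presents essentially no real obstacle: the identification $\pi_1(\Sigma_F) = K \cap H$ follows from covering space theory, and the finite-neighborhood conclusion is automatic from compactness. The only genuine input in the argument is that the generators of $K_1$ lie in $K \cap H$ (not merely in $K$ or in $H$), so that they lift to loops in $\Sigma_F$ rather than only to arcs. The interest of the fact lies in its role as the prototype for the coarse-geometric Proposition \ref{general thm}, where the analogous step in the metric-bundle setting is genuinely more delicate; there one has no preferred compact fiber $F$ to absorb the loops and must extract the analogous compact model from the bundle structure together with the hyperbolicity and qi-embeddedness hypotheses.
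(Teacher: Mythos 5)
Your proof is correct and follows essentially the same route as the paper: identify $\pi_1(\Sigma_F)$ with $K\cap H$ via incompressibility and covering space theory, then capture a finite generating set of $K_1$ inside a compact subsurface of $\Sigma_F$. The paper phrases the key step as the standard fact that a finitely generated subgroup of the (infinitely generated free) group $\pi_1(\Sigma_F)$ is carried by a compact essential subsurface, whereas you build $\Sigma_0$ directly as a regular neighborhood of generating loops --- a harmless simplification, since the statement only asks for the containment $K_1 < i_\ast(\pi_1(\Sigma_0))$ rather than an essential carrier.
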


While we could not find an explicit reference for Fact \ref{fact}, the proof is fairly straightforward. Let $\Sigma_F^0$ denote the cover
of $\Sigma$ corresponding to $K \cap H$. Then 
 $\Sigma_F \subset M_F$ comes from an immersion $\iota$ of $\Sigma_F^0$ into 
 $M_F$ such that $$\iota_\ast: \pi_1(\Sigma_F^0) \to \pi_1(M_F) = \pi_1(F)$$ is injective. Then $\iota_\ast( \pi_1(\Sigma_F^0)) =
 K \cap H$ is infinitely generated free. Hence $K_1$ can be
 identified with a finitely generated free subgroup of 
$ \pi_1(\Sigma_F^0)$, and is therefore carried by a \emph{compact}
essential subsurface $W$ of $\Sigma_F^0$. Choosing $\iota (W)=\Sigma_0$,
Fact \ref{fact} follows.

\subsection{Preliminaries}\label{sec-prel}
{\em All  metric spaces  in this paper are proper geodesic metric spaces.} A {\em metric graph} is a 
simplicial connected graph where all the edges are assigned length $1$ and the graph is given the induced length metric \cite[Chapter I.1, Section 1.9]{bridson-haefliger}. 
For any graph $X$ we shall denote by $V(X)$ and $E(X)$ the vertex set and the edge set of $X$ respectively.
For any metric space $X$, $A\subset X$ and $r\geq 0$ we denote by $N_r(A)$ the set $\{x\in X: d(x,a)\leq r\,\mbox{for some}\, a\in A\}$
 and refer to it as the $r$-neighborhood of $A$ in $X$.
Also for $x\in X$, $r\geq 0$ we shall denote by $B(x,r)$ the closed ball of radius $r$ in $X$ centered at $x$.
Given $A,B\subset X$ the Hausdorff distance of $A,B$ is given by $Hd(A,B)=\inf \{r\geq 0: B\subset N_r(A), A\subset N_r(B)\}$.
We now recall some basic notions we shall use in this paper.
We refer the reader to \cite[Chapter I.8]{bridson-haefliger} for details on quasi-isometries and q(uasi)-i(sometric) embeddings,
 and to \cite[Chapter III.H]{bridson-haefliger} for
details on (Gromov-)hyperbolic groups. 
If $X$ is a geodesic metric space and $Y\subset X$ is a path-connected subspace we equip $Y$ with the
induced path metric
\cite[Chapter I.3]{bridson-haefliger} and assume that $Y$ is a geodesic metric space
with respect to this metric.

If $G$ is a group with a finite generating set $S$ then we shall always identify $G$ with the vertex set of the Cayley
graph $\Gamma(G,S)$. The metric on $G$ induced from $\Gamma(G,S)$ will be referred to as the {\em word metric} on $G$ with respect to
the generating set $S$.
Let $H$ be a finitely generated subgroup of a finitely generated group $G$.
Assume further that a finite generating set of $G$ is chosen extending a 
finite generating set of $H$, so that the Cayley graph $\Gamma_H$ embeds in
$\Gamma_G$.

\begin{defn}\label{defn-disto}\cite[Chapter 4]{gromov-ai}
Let $i: \Gamma_H\to \Gamma_G$ be the above embedding of  Cayley graphs. The \emph{distortion function} of $H$ in $G$ is given by 
$$disto(R) = Diam_H (\Gamma_H \cap B_G(R))$$ where $B_G(R)$ is the $R-$ball about $1\in \Gamma_G$. If the distortion is bounded below by a 
super-linear function, we say that $H$ is \emph{distorted} in $G$. Else, we say it is \emph{undistorted}.
\end{defn}

Note that $H$ is undistorted in $G$ if and only if it is qi-embedded.
Given a function $f:\mathbb N\map \mathbb N$, a family of maps $\phi_{i}: X_{i}\map Y_{i}, \, i \in I$
between metric spaces is called {\em uniformly metrically proper 
	as measured by $f$} \cite{mahan-sardar} if for all $i\in I$ and for all $x,x'\in X_{i}$, 
$$d_{Y_{i}}(\phi_{i}(x), \phi_{i}(x'))\leq R \Rightarrow d_{X_{i}}(x,x')\leq f(R)$$
for all $R\geq 0$.
We record the following elementary fact for easy reference.
\begin{lemma}\label{lem: prop emb}
(1) Suppose $H<G$ are finitely generated groups. Then the inclusion $H\map G$ is uniformly metrically proper
with respect to any given word metrics on $G,H$.\\
(2) Suppose $X$ is a geodesic metric space and $Y$ is a subspace such that it is also a geodesic metric space with respect to the induced length
metric $d_Y$. Suppose that the inclusion map $Y\map X$ is uniformly metrically
proper as measured by $f:\NN\map \NN$ with respect to these metrics. Then given a $k$-quasigeodesic $\alpha$ of $X$ contained in $Y$, 
it is a $k'=k'(k,f)$-quasigeodesic of $Y$ with respect to the metric $d_Y$.
\end{lemma}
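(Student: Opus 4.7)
The proof splits into two essentially independent parts, both routine. For part (1), fix finite generating sets $T\subset S$ of $H\subset G$, so that the Cayley graph $\Gamma(H,T)$ embeds in $\Gamma(G,S)$. Since $\Gamma(G,S)$ is locally finite, the ball $B_G(R)=\{g\in G:d_G(1,g)\leq R\}$ is a finite set for every $R\geq 0$, and hence so is $H\cap B_G(R)$. Define
$$f(R)=\max\{d_H(1,h):h\in H\cap B_G(R)\},$$
with the convention $f(R)=0$ when the set is empty. By left-invariance of both word metrics, if $h_1,h_2\in H$ satisfy $d_G(h_1,h_2)\leq R$ then $h_1^{-1}h_2\in H\cap B_G(R)$, so $d_H(h_1,h_2)=d_H(1,h_1^{-1}h_2)\leq f(R)$. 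This is the required uniform metric properness.

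For part (2), the lower bound in the $Y$-metric comes for free. Since $d_Y$ is the infimum of lengths of paths in $Y$ joining two points, and every such path is also a path in $X$, the inequality $d_X(y_1,y_2)\leq d_Y(y_1,y_2)$ holds for all $y_1,y_2\in Y$. Combined with the $k$-quasigeodesic hypothesis for $\alpha$ in $X$, this yields $d_Y(\alpha(s),\alpha(t))\geq d_X(\alpha(s),\alpha(t))\geq |t-s|/k-k$.

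For the upper bound I would discretize and apply uniform metric properness piecewise. Given parameters $s<t$ in the domain of $\alpha$, choose a partition $s=t_0<t_1<\cdots<t_N=t$ with $t_i-t_{i-1}\leq 1$ and $N\leq\lceil t-s\rceil+1$. The quasigeodesic upper bound in $X$ gives $d_X(\alpha(t_{i-1}),\alpha(t_i))\leq 2k$, and uniform metric properness of $Y\hookrightarrow X$ as measured by $f$ then yields $d_Y(\alpha(t_{i-1}),\alpha(t_i))\leq f(2k)$. Summing, $d_Y(\alpha(s),\alpha(t))\leq N f(2k)\leq f(2k)(|t-s|+2)$, and choosing $k'=\max\{k,f(2k)\}+2f(2k)$ packages both inequalities into a $k'$-quasigeodesic estimate with $k'=k'(k,f)$ as claimed. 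I do not anticipate any real obstacle; the mildest point worth noting is that quasigeodesics need not be continuous, but this does not affect the discretization argument, which only samples $\alpha$ at parameters chosen in its domain.
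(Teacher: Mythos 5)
Your proposal is correct and follows essentially the same route as the paper's own (source-level) argument for part (2): the lower bound from $d_X\leq d_Y$ together with the quasigeodesic inequality in $X$, and the upper bound by subdividing $[s,t]$ into subintervals of length at most $1$, bounding each $X$-increment by $2k$, converting to a $d_Y$-bound of $f(2k)$ via uniform metric properness, and summing. Part (1), which the paper leaves unproved as an elementary fact, is handled by the standard local-finiteness/finite-ball argument and is fine.
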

\iffalse

(2) Suppose $\alpha:J\map X$ is a $k$-quasigeodesic where $J$ is an interval of $\RR$.
Then for all $s,t\in J$ we have $-k+1/k |s-t|\leq d_X(\alpha(s), \alpha(t))\leq d_Y(\alpha(s), \alpha(t))$.
On the other hand if $|s-t|\leq 1$ then $d_Y(\alpha(s), \alpha(t))\leq f(2k)$ since $d_X(\alpha(s), \alpha(t))\leq 2k$.
Given any $s\leq t\in J$ we choose $s_0=s<s_1<s_2<\cdots< s_n=t$ such that $s_{i+1}-s_i=1$ for $0\leq i\leq n-2$
and $s_n-s_{n-1}\leq 1$. Then
$$d_Y(\alpha(s), \alpha(t))\leq \sum_{0\leq i\leq n-1} d_Y(\alpha(s_i), \alpha(s_{i+1}))\leq \sum_{0\leq i\leq n-1} f(2k)$$
which is at most $(|s-t|+1)f(2k)$. Hence we can take $k'=\max\{k, f(2k)\}$. 
\qed
\fi

A  geodesic metric space $X$    is $\delta$-{\em hyperbolic} 
if for any geodesic triangle $\Delta xyz$, $[x,y]\subset N_{\delta}([y,z]\cup [x,z])$.
A metric space $X$ is called {\em hyperbolic} if it is $\delta$-hyperbolic for some $\delta\geq 0$. 
A finitely generated group is hyperbolic if some (any) of its Cayley graphs (with respect to a finite generating set) is hyperbolic. 
Given a hyperbolic metric space $X$ its {\em geodesic boundary or visual boundary} is the set of equivalence classes of geodesic rays where two
rays $\alpha, \beta$ are equivalent if and only if $Hd(\alpha, \beta)<\infty$. The boundary of $X$ is denoted by $\partial X$. 
The set $\bar{X}:=X\cup \partial X$ comes equipped with a natural topology \cite[Chapter III.H]{bridson-haefliger}.
Suppose $X$ is a hyperbolic metric space and $\alpha:[0,\infty)\map X$ is a geodesic ray. 
Then the equivalence class of $\alpha$ is denoted by $\alpha(\infty)$. 
Also if $\alpha:[0,\infty)\map X$ is a quasigeodesic ray, then there is a geodesic ray $\beta:[0,\infty)\map X$, unique up to
the equivalence relation  above, such that $Hd(\alpha,\beta)<\infty$. We shall use the same notation $\alpha(\infty)$ to denote 
$\beta(\infty)$.
If $G$ is a hyperbolic group then we write $\partial G$ to denote the geodesic boundary of any of its
Cayley graphs since any two such boundaries of Cayley graphs are  homeomorphic (cf.\ Lemma \ref{ct elem lemma}(2)).

\begin{comment}

A map $\phi:X\map Y$ is a {\em quasi-isometric embedding or simply
qi embedding} if there is a constant $K\geq 1$ such that for all $x,x'\in X$ we have 
$$-K+\frac{1}{K}d_X(x,x')\leq d_Y(\phi(x), \phi(x'))\leq K+Kd_X(x,x').$$

	content...

\begin{enumerate}
\item Suppose  is a map between metric spaces. We will say that $\phi$ 
\item ()
\item By a geodesic (quasi geodesic) in a metric space $X$ we shall mean an isometric (resp. qi ) embedding of an interval $I\subset \RR$
in $X$. If $I=[0,\infty)$ we shall call it a geodesic (resp. quasi-geodesic) ray. If $I=\RR$ then we shall call it a geodesic (resp. quasi-geodesic)
line in $X$. If $x,y\in X$ we shall denote by $[x,y]$ a geodesic joining $x,y$. For all $x,y,z\in X$, we denote by $\Delta xyz$ any
set of three geodesics $[x,y], [y,z], [x,z]$ and call it a {\em geodesic triangle}. The segments $[x,y], [y,z], [x,z]$ are called the
{\em sides} of the triangle.

\item 
\end{enumerate}
\end{comment}

\iffalse
\begin{comment}
It is a standard fact that the identity map $G\map G$ extends to a quasi-isometry between
any two Cayley graph and that hyperbolicity is a qi invariant property; hence this is well-defined.

	content...

\begin{defn}
\begin{enumerate}
\item {\bf Hyperbolic metric space:} 

\item {\bf Boundary at infinity:} 
\end{enumerate}
\end{defn}
\end{comment}
\fi

If $\alpha:\RR\map X$ is a (quasi)geodesic line then by $\alpha(-\infty)$ we shall denote the point of $\partial X$ determined by 
the ray $t\mapsto \alpha(-t)$. In this case, we say that $\alpha$ joins the pair of points $\alpha(-\infty), \alpha(\infty)$. 
Suppose $\xi_i$, $i=1,2,3$
are any three distinct points of $\partial X$. Then any set of three geodesic lines joining these points in pairs is called an 
{\em ideal triangle} with vertices $\xi_1, \xi_2, \xi_3$. The geodesic lines are called the {\em sides} of the ideal triangle.

\begin{lemma}\label{lem-bary}
Suppose $X$ is proper $\delta$-hyperbolic. Then:\\
(1) {\em (Visibility \cite[Chapter III.H, Lemma 3.2]{bridson-haefliger}).}  Suppose $\xi_1, \xi_2\in \partial X$ are two distinct points. Then there is a geodesic line in $X$ joining them.\\
(2) {\em (Barycenters of ideal triangles) \cite[Chapter III.H, Lemmas 1.17, 3.3]{bridson-haefliger}} There are constants $D=D(\delta), R=R(\delta)$ such that the following holds.
For any three distinct points $\xi_1,\xi_2, \xi_3\in \partial X$ there is a point $x\in X$ which is contained in the $D$-neighborhood
of each of the three sides of any ideal triangle with vertices $\xi_1,\xi_2, \xi_3$. Moreover, 
if $x'$ is any other such point then $d(x,x')\leq R$.  
\end{lemma}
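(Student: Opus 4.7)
The plan is to handle the two parts in sequence, using only properness, $\delta$-slimness, and an Arzel\`a-Ascoli argument.

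For part (1), fix a basepoint $x_0 \in X$ and choose geodesic rays $\alpha,\beta:[0,\infty)\to X$ based at $x_0$ with $\alpha(\infty)=\xi_1$, $\beta(\infty)=\xi_2$. For each $n$, let $\gamma_n$ be a geodesic from $\alpha(n)$ to $\beta(n)$. I would first show that $d(x_0,\gamma_n)$ stays bounded in $n$: otherwise, applying $\delta$-slimness to the triangle with vertices $x_0,\alpha(n),\beta(n)$ would force the initial segments of $\alpha$ and $\beta$ of arbitrarily long length to be $\delta$-close, and hence $\xi_1=\xi_2$, a contradiction. Properness of $X$ then lets us invoke Arzel\`a-Ascoli to extract a subsequence of reparametrizations of the $\gamma_n$ (centered at a point realizing $d(x_0,\gamma_n)$) that converges locally uniformly to a bi-infinite geodesic $\gamma$. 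Again by slimness, the two ends of $\gamma$ must be $\xi_1$ and $\xi_2$.

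For part (2), use (1) to pick geodesic lines $\ell_{ij}$ joining $\xi_i$ to $\xi_j$. A standard limiting argument, applied to finite triangles on points $p_i^n \to \xi_i$, shows that any ideal triangle in $X$ is $\delta'$-slim for some $\delta'=\delta'(\delta)$: each side lies in the $\delta'$-neighborhood of the union of the other two. To produce the barycenter, consider $f:\ell_{12}\to\RR$ defined by $f(x)=d(x,\ell_{13})-d(x,\ell_{23})$. Since two geodesic rays with the same endpoint at infinity are eventually Hausdorff-close (another consequence of slimness plus properness), one sees that $d(x,\ell_{13})$ stays bounded as $x\to\xi_1$ along $\ell_{12}$ while $d(x,\ell_{23})\to\infty$; symmetrically at $\xi_2$. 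The function $f$ is $2$-Lipschitz and continuous, so it has a zero-ish point, and combined with $\delta'$-slimness this bounds $d(x,\ell_{13})$ and $d(x,\ell_{23})$ by a constant $D=D(\delta)$; one automatically has $d(x,\ell_{12})=0$.

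For the uniqueness clause, suppose $x,x'\in X$ both lie in the $D$-neighborhood of every side. The rays of $\ell_{12}$ and $\ell_{13}$ pointing to $\xi_1$ are asymptotic and, by the Morse-type lemma in hyperbolic geometry, they diverge at linear speed once one moves away from a bounded ``fellow-travelling'' region; the same holds at $\xi_2$ and $\xi_3$. The tripod-like locus where all three sides are simultaneously within $D+O(\delta)$ of one another is therefore bounded in diameter by some $R=R(\delta)$. Both $x$ and $x'$ must lie in this region, so $d(x,x')\le R$. The main obstacle is this last step, the diameter bound for the barycentric region: everything else is a direct Arzel\`a-Ascoli plus slimness exercise, while the uniqueness requires the quantitative divergence of asymptotic rays that lies at the heart of $\delta$-hyperbolicity.
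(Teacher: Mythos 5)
The paper does not actually prove this lemma; it is quoted directly from Bridson--Haefliger (III.H, Lemmas 3.2 and 3.3), and your argument is essentially the standard one given there: Arzel\`a--Ascoli plus slimness for visibility, slimness of ideal triangles plus an intermediate-value argument for the existence of a barycenter. That part of your write-up is correct, including the key observations that $d(x_0,\gamma_n)$ stays bounded and that $d(x,\ell_{23})\to\infty$ as $x\to\xi_1$ along $\ell_{12}$ (the latter because the only accumulation points of $\ell_{23}$ in $\partial X$ are $\xi_2,\xi_3$). One small point you should make explicit: the statement asks for a single $x$ that works for \emph{any} ideal triangle on $\xi_1,\xi_2,\xi_3$, and sides are not unique; you need the standard fact that two geodesic lines with the same pair of ideal endpoints are at Hausdorff distance $O(\delta)$ (a limiting/slimness argument again), and then enlarge $D$ accordingly.

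The genuinely soft spot is the one you flag yourself: the diameter bound $R(\delta)$ on the barycentric region. "Linear divergence of asymptotic rays" tells you that the set of points of $\ell_{12}$ within $D$ of $\ell_{13}$ is coarsely a sub-ray toward $\xi_1$, and the set within $D$ of $\ell_{23}$ is coarsely a sub-ray toward $\xi_2$; but that by itself bounds the \emph{gap} between these two sets (via slimness), not their \emph{overlap}, which is what you need. The cleanest way to close this is via Gromov products: if $x$ is within $D$ of all three sides then $(\xi_i\mid\xi_j)_x\le D+O(\delta)$ for all pairs, while the exact identity $(\xi\mid x')_x+(\xi\mid x)_{x'}=d(x,x')$ (up to the $O(\delta)$ error of extending to ideal points) shows that for each $i$ one of $(\xi_i\mid x')_x$, $(\xi_i\mid x)_{x'}$ is at least $d(x,x')/2-O(\delta)$; by pigeonhole two indices fall on the same side, and the hyperbolic inequality then forces some $(\xi_i\mid\xi_j)_x\ge d(x,x')/2-O(\delta)$, whence $d(x,x')\le 2D+O(\delta)$. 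With that (or an equivalent quantitative statement) supplied, your proof is complete and agrees with the cited source.
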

A point $x$ as in Lemma \ref{lem-bary} 
is referred to as a {\em barycenter} of the ideal triangle. %$\Delta (\xi_1,\xi_2,\xi_3)$.
Thus we have a coarsely well-defined map $\partial^3 X\map X$ sending distinct triples of points to a barycenter. We call such a map a {\em barycenter map.}
We call $X$ a {\em non-elementary hyperbolic metric space} if there exists a constant $L\geq 0$ such that the $L$-neighborhood of the image of a barycenter
map is the entire $X$ (the terminology is based on the notion of a non-elementary group of isometries 
of a hyperbolic metric space).

\begin{defn}
Suppose $X$ is a hyperbolic metric space and $A\subset X$.
Then the {\em limit set} of $A$ in $X$, denoted by $\Lambda_X(A)$,
is the collection of accumulation points of $A$ in $\partial X$.
\end{defn}
If $G$ is a hyperbolic group and $H$ is a subgroup then  the limit set
$\Lambda_G(H)$ of $H$ in $G$ is the limit set of some (any) $H-$orbit.
The following is a list of basic properties of the limit set that will be useful for us (see \cite[Chapter 8]{thurstonnotes} and \cite[Chapter III.H.3]{bridson-haefliger} for instance).

\begin{lemma}\label{limit set lemma}
(0) If $X$ is a hyperbolic metric space and $A\subset X$ then $\Lambda_X(A)$ is a 
closed subset of $\partial X$; $\Lambda_X(A)=\emptyset$ if and only if  $A$ is bounded.\\
(1) If $X$ is a hyperbolic metric space and $A,B\subset X$ then $\Lambda_X(A)=\Lambda_X(B)$ if
$Hd(A,B)<\infty$. If $Y $ is qi-embedded in $X$, then $\partial Y$ embeds
in $\partial X$.\\
(2) Suppose $H<G$ are hyperbolic groups. Then the following hold:\\
(i) If $H\trianglelefteq G$ then $\Lambda_G(H)=\partial G$.\\
(ii) If $H$ is a finitely generated subgroup of $G$ which is qi embedded and is of infinite index
then $\Lambda_G(H)$ is a proper closed subset of $\partial G$.
\end{lemma}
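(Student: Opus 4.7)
My plan is to handle the four assertions separately, as they involve rather different ideas, though they share common tools from hyperbolic geometry.

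For (0), I would first show $\Lambda_X(A)$ is closed by arguing that its complement in $\partial X$ is open in the usual topology on $\bar X$: any $\xi \in \partial X$ not a limit of points in $A$ admits a basic neighborhood in $\bar X$ disjoint from $A$, and this neighborhood consists of boundary points each of which also fails to be a limit of $A$. The emptiness criterion uses properness of $X$: if $A$ is bounded it lies in a compact ball in $X$, so cannot accumulate on $\partial X$; conversely, if $A$ is unbounded, pick $a_n \in A$ with $d(a_n, x_0) \to \infty$ for a basepoint $x_0$, pass to a subsequence so the geodesics $[x_0, a_n]$ converge to a geodesic ray (using the Arzel\`a--Ascoli argument available in proper hyperbolic spaces), and conclude that $(a_n)$ converges in $\bar X$ to the endpoint of that ray.

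For (1), if $Hd(A,B) \le r < \infty$, take any sequence $a_n \in A$ converging to $\xi \in \partial X$ and pick $b_n \in B$ with $d(a_n,b_n) \le r$; then $(b_n)$ has the same Gromov product behaviour with any basepoint up to the constant $r$, hence converges to the same $\xi$, giving $\Lambda_X(A) \subset \Lambda_X(B)$, and symmetry gives equality. For the qi-embedding statement, a qi embedding $Y \hookrightarrow X$ sends geodesic rays in $Y$ to quasigeodesic rays in $X$, each having a well-defined endpoint in $\partial X$; two rays at finite Hausdorff distance in $Y$ go to rays at finite Hausdorff distance in $X$, so the map $\partial Y \to \partial X$ is well defined. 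Injectivity follows because if two rays in $Y$ map to rays with a common endpoint in $\partial X$, then they are asymptotic in $X$; pulling back via the fact that the qi embedding is uniformly metrically proper (Lemma \ref{lem: prop emb}) shows they were already asymptotic in $Y$.

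For (2)(i), assume $H$ is infinite (otherwise $\Lambda_G(H) = \emptyset$ and the statement fails, so this is the intended hypothesis). By part (0), $\Lambda_G(H)$ is a nonempty closed subset of $\partial G$. Normality gives $G$-invariance: for $g \in G$ and any sequence $h_n \in H$ with $h_n \to \xi$, write $g h_n = (g h_n g^{-1}) g$ with $g h_n g^{-1} \in H$, so $g\xi \in \Lambda_G(H)$. Since $G$ is non-elementary hyperbolic, its action on $\partial G$ is minimal (every orbit is dense), so the only nonempty closed $G$-invariant subset of $\partial G$ is $\partial G$ itself.

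For (2)(ii), the main point is that quasiconvexity forces $H$ to sit at bounded Hausdorff distance from the ``hull'' of its limit set: the union of all bi-infinite geodesics with endpoints in $\Lambda_G(H)$ lies in a bounded neighborhood of $H$ in the Cayley graph of $G$ (this is the standard consequence of quasiconvexity combined with visibility, Lemma \ref{lem-bary}(1)). If, contrary to the claim, $\Lambda_G(H) = \partial G$, then this hull is the hull of all of $\partial G$, which in a non-elementary hyperbolic group is coarsely all of $G$. Hence $H$ would be cobounded in $G$, i.e.\ $G \subset N_R(H)$ for some $R$, forcing $[G:H] < \infty$ and contradicting infinite index. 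The main obstacle here is justifying cleanly that the hull of $\partial G$ coarsely covers $G$; I would invoke the barycenter map of Lemma \ref{lem-bary}(2) together with non-elementarity to produce, for each $g \in G$, an ideal triangle whose barycenter lies within bounded distance of $g$, hence each $g$ is near a geodesic side of such a triangle.
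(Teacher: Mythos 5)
The paper does not actually prove Lemma \ref{limit set lemma}; it is stated as a list of standard facts with citations to Thurston's notes and to Bridson--Haefliger. Your argument is the standard textbook proof of each part and is correct: (0) and (1) are routine uses of properness, Arzel\`a--Ascoli, and the Gromov product; (2)(i) is exactly the argument the paper itself reuses later in Corollary \ref{cor: fiber limit set} ($\Lambda_G(H)$ is a nonempty closed $G$-invariant set, hence all of $\partial G$ by minimality); and (2)(ii) is the usual hull argument (quasiconvexity puts the weak hull of $\Lambda_G(H)$ in a bounded neighborhood of $H$, while coarse surjectivity of the barycenter map makes the hull of $\partial G$ coarsely dense, forcing finite index).

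Two small caveats, neither fatal. First, as you note, (2)(i) needs $H$ infinite; this is implicit in the paper's usage (the fiber group $K$ is always infinite). Second, both your (2)(i) and (2)(ii) invoke non-elementarity of $G$ (minimality of the boundary action, coarse surjectivity of the barycenter map). You should dispose of the elementary cases separately, but they are trivial: if $G$ is finite both statements are vacuous, and if $G$ is two-ended then an infinite normal subgroup has finite index (giving (i)) while an infinite-index qi-embedded subgroup is finite with empty limit set (giving (ii)). With that remark added, the proof is complete.
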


We will also need the following elementary lemma. 

\begin{lemma}\label{Hd lemma}
Let $G$ be a finitely generated group acting properly and cocompactly 
by isometries
on a metric space $X$.
Let $\Gamma$ be a Cayley graph of $G$ with respect to a finite generating set.
Suppose $H,K$ are two finitely generated subgroups of $G$ and $A,B$ are two subsets of $X$ invariant under $H$ and $K$ respectively.
Lastly, assume that $A/H, \, B/K$ are compact. If $Hd_X(A,B)<\infty$ then $Hd(H,K)<\infty$ where  $Hd(H,K)$ 
is taken in $\Gamma$.
\end{lemma}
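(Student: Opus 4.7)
The plan is to reduce the statement to a standard consequence of the Milnor–Švarc lemma, namely that any orbit map $\Gamma \to X$ is a quasi-isometry, and then show that under this quasi-isometry the subgroups $H, K$ correspond (up to finite Hausdorff distance) to the subsets $A, B$.

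First, I would fix a basepoint $x_0 \in X$ and consider the orbit map $\phi : \Gamma \to X$ defined on vertices by $\phi(g) = g \cdot x_0$ and extended affinely on edges. By the Milnor–Švarc lemma \cite[Proposition 8.19]{bridson-haefliger}, the hypothesis that $G$ acts properly and cocompactly by isometries on $X$ guarantees that $\phi$ is a quasi-isometry; in particular there is a constant $C \geq 1$ such that
$$\tfrac{1}{C} d_\Gamma(g_1,g_2) - C \;\leq\; d_X(\phi(g_1),\phi(g_2)) \;\leq\; C\, d_\Gamma(g_1,g_2) + C$$
for all $g_1, g_2 \in G$.

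Next I would relate $A$ to the $H$-orbit $H \cdot x_0$, and $B$ to $K \cdot x_0$. Pick $a_0 \in A$ and $b_0 \in B$. Since $A$ is $H$-invariant, $H \cdot a_0 \subset A$, and since $A/H$ is compact, there is a constant $D_A$ with $A \subset N_{D_A}(H \cdot a_0)$; thus $Hd_X(A, H\cdot a_0) < \infty$. On the other hand, $d_X(h\cdot a_0, h\cdot x_0) = d_X(a_0,x_0)$ for every $h \in H$, so $Hd_X(H\cdot a_0, H\cdot x_0) \leq d_X(a_0,x_0) < \infty$. Combining, $Hd_X(A, \phi(H)) < \infty$, and symmetrically $Hd_X(B, \phi(K)) < \infty$. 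By the triangle inequality for Hausdorff distance, the hypothesis $Hd_X(A,B) < \infty$ gives
$$Hd_X(\phi(H), \phi(K)) \;<\; \infty.$$

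Finally, I would pull this bound back through $\phi$. If $M := Hd_X(\phi(H), \phi(K))$, then for each $k \in K$ there is $h \in H$ with $d_X(\phi(h), \phi(k)) \leq M$, and the lower quasi-isometry bound gives $d_\Gamma(h,k) \leq C(M+C)$; hence $K \subset N_{C(M+C)}(H)$ in $\Gamma$, and symmetrically $H \subset N_{C(M+C)}(K)$. This yields $Hd_\Gamma(H,K) < \infty$, as required.

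There is no real obstacle here — the content of the lemma is essentially the coarse equivariance between orbits and subgroups. The only thing to be careful about is making sure the passage from $A$ to the orbit $H \cdot x_0$ uses both the $H$-invariance of $A$ \emph{and} the cocompactness of the $H$-action on $A$; neither alone suffices.
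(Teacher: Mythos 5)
Your proof is correct and follows essentially the same route as the paper's: use cocompactness of the $H$- and $K$-actions to replace $A$ and $B$ by orbits at finite Hausdorff distance, use the isometric action to change basepoints within an orbit at bounded cost, and then pull the resulting finite Hausdorff distance between orbits back to the Cayley graph via the Milnor--{\v S}varc quasi-isometry. No issues.
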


\begin{proof}
 Let $S$ be a generating set of $G$ and let $\Gamma$ be the Cayley graph $\Gamma(G,S)$. On subsets of $G$ we shall
use the metric induced from $\Gamma$.
Let $x\in A, y\in B$. Then $Hd(K.x, K.y)\leq d_X(x,y)<\infty$. Since the actions of $H, K$ on $A,B$ respectively
are cocompact, we have $Hd(H.x,A)<\infty, Hd(K.y, B)<\infty$. Since $Hd(A,B)<\infty$,  it follows that
$Hd(H.x, K.y)<\infty$. Hence $Hd(H.x, K.x)<\infty$. Since the action of $G$ on $X$ is proper and
cocompact, the orbit map $G\map X$ given by $g\mapsto g.x$ for all $g\in G$ is a quasi-isometry by the Milnor-{\v S}varc lemma
(see \cite[Proposition 8.19]{bridson-haefliger}). Therefore,
 $Hd(H,K)<\infty$.
\end{proof}

\iffalse
\BLUE{It is a standard fact (\cite[Chapter I.8, section 8.17]{bridson-haefliger}) that any two Cayley graphs are quasi-isometric;
so $Hd(H,K)<\infty$ or not is independent of the Cayley graph where the Hausdorff distance is taken. }
\fi

\subsection{Finitely generated subgroups from bounded neighborhoods}
We are now in a position to generalize Fact \ref{fact}.

\begin{prop}\label{general thm}
Let $G$ be a finitely presented group and $H,K$ be two subgroups where
	$H$ is finitely generated. Let $S\subset G$, $S_1\subset H$ be finite generating sets and $\Gamma=\Gamma(G,S)$,
	$\Gamma_1=\Gamma(H,S_1)$ be the corresponding Cayley graphs.
	Let $D>0$ and suppose that there is an infinite set $A\subset N_D(K)\cap H$ where $N_D(K)$ is the $D$-neighborhood of $K$ in $\Gamma$.
	Suppose that %$A$ is coarsely connected in $\Gamma_1$, i.e. 
    there exists $r\geq 1$ such that any pair of points of $A$ can be connected by a path lying in
	the $r$-neighborhood of $A$ in $\Gamma_1$.	
	Then there is a finitely generated infinite subgroup $K_1<H\cap K$ such that  $A/K_1$ is finite.
In particular, $A$ is contained within a finite neighborhood of $K_1$ in $\Gamma$.
\end{prop}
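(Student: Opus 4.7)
The plan is to partition $A$ by its $K$-coset structure, discretize the $\Gamma_1$-connectedness hypothesis into chains with bounded steps, and then package the combinatorics into a finite labeled graph $Z$ whose loop-holonomy produces $K_1$.

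Since $A\subset N_D(K)$ in $\Gamma$, each $a\in A$ has the form $ks$ with $k\in K$ and $s\in B_\Gamma(D)$, so $A$ lies in finitely many right cosets of $K$; partitioning $A=A_1\sqcup\cdots\sqcup A_m$ according to these cosets $Ka_1,\ldots,Ka_m$ (with chosen representatives $a_i\in A_i$), pigeonhole forces some $A_1$ to be infinite, and for $a,b$ in the same piece one has $ab^{-1}\in H\cap K$. Discretizing the given $\Gamma_1$-path in $N_r(A)$ between any $a,b\in A$ (by replacing each path vertex by a nearby point of $A$) yields a chain $a=c_0,c_1,\ldots,c_n=b$ in $A$ with $d_{\Gamma_1}(c_{j-1},c_j)\leq 2r+1=:C$. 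Setting $\ell_j:=c_{j-1}^{-1}c_j$ and $L:=B_{\Gamma_1}(C)\subset H$ (a finite set), each $\ell_j\in L$, and telescoping gives $a^{-1}b=\ell_1\cdots\ell_n\in\langle L\rangle$.

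The central construction is the finite labeled graph $Z$ with vertices $Ka_1,\ldots,Ka_m$ and a directed edge $Ka_i\stackrel{\ell}{\to}Ka_i\ell$ for each $\ell\in L$ such that $Ka_i\ell$ is again a listed vertex. Chains in $A$ project to edge-paths in $Z$, so $Z$ is connected. A loop $\gamma$ in $Z$ based at $Ka_1$, with labels $\ell_1,\ldots,\ell_n$, satisfies $Ka_1(\ell_1\cdots\ell_n)=Ka_1$, so the element $\Phi(\gamma):=a_1(\ell_1\cdots\ell_n)a_1^{-1}$ lies in $K$; since $a_1,\ell_j\in H$ it lies in $H$ too, hence in $H\cap K$. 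Because $Z$ is a $1$-complex, $\pi_1(Z,Ka_1)$ is a finitely generated free group and free reductions preserve the label product, so $\Phi:\pi_1(Z,Ka_1)\to H\cap K$ is a well-defined homomorphism whose image $K_1^{(1)}$ is a finitely generated subgroup of $H\cap K$.

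To conclude, the chain from $a_1$ to any $a\in A_1$ projects to a loop at $Ka_1$ with label product $a_1^{-1}a$, so $\Phi(\mathrm{chain})=a_1(a_1^{-1}a)a_1^{-1}=aa_1^{-1}$; thus $A_1a_1^{-1}\subset K_1^{(1)}$, making $K_1^{(1)}$ infinite. For each $i\geq 2$ I fix a path in $Z$ from $Ka_1$ to $Ka_i$ with label $g_i\in\langle L\rangle$; the bridge element $k_i:=a_1g_ia_i^{-1}$ lies in $H\cap K$, and setting $K_1:=\langle K_1^{(1)},k_2,\ldots,k_m\rangle$ gives a finitely generated infinite subgroup of $H\cap K$. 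A change-of-basepoint computation shows that the re-based chain-loop for $a\in A_i$ has $\Phi$-image $k_i(aa_i^{-1})k_i^{-1}\in K_1^{(1)}$; since $k_i\in K_1$, conjugation stays inside $K_1$ and yields $aa_i^{-1}\in K_1$. Hence $A\subset\bigcup_{i=1}^m K_1a_i$, so $A/K_1$ is finite, and $A\subset N_M(K_1)$ with $M:=\max_i|a_i|_\Gamma$. The main obstacle I anticipate is ensuring the holonomy lands in $H\cap K$ rather than merely in $K$: this is what forces $Z$ to be labeled only by elements of $L\subset H$, and what forces the merging bridges $k_i$ to be chosen so that they themselves lie in $H\cap K$, so that the resulting $K_1$ remains inside $H\cap K$.
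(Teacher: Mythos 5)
Your proof is correct, and it reaches the conclusion by a genuinely different (and in one respect stronger) route than the paper. The paper's argument is topological: it realizes $G$ as $\pi_1$ of a finite $2$-complex $\YY$ containing a $1$-subcomplex $\ZZ$ carrying $H$, passes to the universal cover $\XX$, quotients by the left $K$-action, uses properness of $\XX$ to see that the image of $A$ lands in a compact set, encloses it in a finite subcomplex $\WW\subset\tilde\ZZ/K$, and takes $K_1$ to be the image of $\pi_1(\WW)$. Your labeled graph $Z$ is in effect a hands-on combinatorial avatar of that finite subcomplex (its vertices are exactly the right cosets $Kg$ that the relevant piece of $\tilde\ZZ/K$ sees, and your holonomy homomorphism $\Phi$ is the induced map on $\pi_1$), so in both proofs $K_1$ is ultimately the $\pi_1$-image of a finite object; but your execution dispenses with covering-space theory entirely. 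The payoff is that you never use finite presentability of $G$ — only that $G$ and $H$ are finitely generated — so you have actually proved a slightly more general statement (the paper needs finite presentability only to build $\YY$; in its applications $G$ is hyperbolic, hence finitely presented, so nothing is lost there). The one place where your argument has to do visible extra work is the passage from the single infinite coset $A_1$ to all of $A$: you must adjoin the finitely many bridge elements $k_i=a_1g_ia_i^{-1}$, each checked to lie in $H\cap K$, and rebase the loops, whereas in the paper this bookkeeping is absorbed into the connectedness of the lift $\tilde\WW$. All the delicate points — that the edge labels and basepoint conjugators lie in $H$ so that the holonomy lands in $H\cap K$ rather than just $K$, that $L$ is finite because $S_1$ is, and that the chain discretization gives the bound $2r+1$ — are handled correctly.
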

\begin{proof}
We start with a finite connected simplicial 2-complex $\YY$ together with a finite 1-subcomplex $\ZZ$ such that 
 $G= \pi_1(\YY)$ and $i^*_{\ZZ,\YY}(\pi_1(\ZZ))=H$
where $i_{\ZZ,\YY}:\ZZ\map \YY$ is the inclusion map and $i^*_{\ZZ,\YY}:\pi_1(\ZZ)\map \pi_1(\YY)$ is the induced map. One
may metrize $\YY$ in the standard way so that it is a geodesic metric space (\cite[Chapter I.7, Theorem 7.19]{bridson-haefliger}). 
Fix one such metric on $\YY$ once
and for all. Let $p:\XX\map \YY$ be the universal cover of $\YY$ endowed with the induced length metric from $\YY$.
(See \cite[Definition 3.24, Chapter I.3]{bridson-haefliger}.) We note that $\XX$ is a proper metric space 
(\cite[Exercise 8.4(1), Chapter I.8]{bridson-haefliger}).
Let $x\in \XX$ be a point such that $p(x)\in \ZZ$.  $G$ acts on $\XX$ by deck
transformations so that $\XX/G=\YY$. Let $\phi:G\map \XX$, $g\mapsto g.x$ be the orbit map. 
Then, identifying $G$ with $V(\Gamma)$ (equipped with the subspace metric),
 $\phi$ gives a Lipschitz map from $G$ 
to $\XX$. Similarly, the inclusion map $i_{H,G}:H\map G$ is also Lipschitz. Therefore, the composition $\phi\circ i_{H,G}$
is Lipschitz. Suppose that $\phi$ and $\phi\circ i_{H,G}$ are both $L$-Lipschitz.

Let $\tilde{\ZZ}$ be the connected component of $p^{-1}(\ZZ)$ containing $x$. It follows from the hypothesis that if $R=(D+r)L$ then
$K.B(x;R)$ contains a connected set $\FF\subset \tilde{\ZZ}$ such that $\phi(A)\subset \FF$. Now consider the quotient map $q:\XX\map \XX/K$.
Since $\XX$ is a proper metric space, $B(x,R)$ is compact. Hence, $q(\FF)$ is contained in the compact set $q(B(x;R))$. 
Thus $\overline{q(\FF)}$ is compact. 
We note that $q(\tilde{\ZZ})$ is a closed subcomplex of $\XX/K$. Hence, there is a finite subcomplex, say $\WW$, of $q(\tilde{\ZZ})$ 
containing $\overline{q(F)}$. Let $i: \WW\map q(\tilde{\ZZ})$, and $j: q(\tilde{\ZZ})\map \XX/K$ be the inclusion maps. 
Let $K_1$ be the image of $j^*\circ i^*$.
Clearly this is a finitely generated subgroup of $H\cap K$.
Since $q^{-1}(\WW)$ contains the set $\FF$ it follows that there is a connected component, say $\tilde{\WW}$, of $q^{-1}(\WW)$
containing $\FF$. Since $\tilde{\WW}/K_1$ is compact and $\phi(A)\subset \FF\subset \tilde{\WW}$, it follows that $\phi(A)/K_1$ is finite.

 Finally, we note that the orbit map $\phi: G\map \XX$ is a $G$-equivariant quasi-isometry by the Milnor-{\v S}varc lemma 
(\cite[Proposition 8.19]{bridson-haefliger}) since the $G$-action on
$\XX$ is proper and cocompact. It follows that $A/K_1$ is finite. Since $A$ is an infinite set,
it follows that $K_1$ is also infinite. \end{proof}

\section{Graphs of  spaces and Cannon-Thurston Maps} We shall recall some material from \cite{mahan-sardar,mahan-rafi,mbdl2} and deduce some consequences.
Informally a graph of metric spaces is a $1$-Lipschitz surjective map $\pi:\mathcal X\map \mathcal B$ from $\XX$ to a   metric graph $\BB$ satisfying
some additional  conditions. We refer to $\mathcal X$ as the {\em total space} and $\mathcal B$ as the {\em base}. 
We shall need  two specific instances of this:  metric graph bundles and trees of metric spaces.
The common feature in both these cases is that $\mathcal X$ is a graph, the map $\pi$ is simplicial and the fibers are uniformly properly 
embedded in the total space. As usual, we shall use $V(\GG)$ to denote the vertex set of a graph $\GG$.

\subsection{Metric graph bundles}
\begin{defn}\label{defn-mgbdl}\cite[Definition 1.2]{mahan-sardar}
Suppose $\mathcal X$ and $\BB$ are metric graphs and $f:\mathbb N \rightarrow \mathbb N$ is a function.
We say that $\XX$ is an $f$-{\em metric graph bundle} over $\BB$ if there exists a surjective simplicial 
map $\pi:\XX\rightarrow \BB$  such that the following hold.
\begin{enumerate}
\item For all $b\in V(\BB)$, $\FF_b:=\pi^{-1}(b)$ is a connected subgraph of $\XX$. Moreover, the inclusion maps
$\FF_b\rightarrow \XX$, $b\in V(\BB)$ are uniformly metrically proper as measured by $f$.
\item For all adjacent vertices $b_1,b_2\in V(\BB)$, any $x_1\in V(\FF_{b_1})$ is connected 
by an edge to some $x_2\in V(\FF_{b_2})$.
\end{enumerate}
\end{defn}

For all $b\in V(B)$ we shall refer to $F_b$ as the {\em fiber} over $b$ and denote its path metric by $d_b$.
Condition (2) of Definition \ref{defn-mgbdl} immediately gives:

\begin{lemma}\label{bundle lemma1}
If $\pi:\XX\rightarrow \BB$ is a metric graph bundle then for any points $v,w\in V(\BB)$ we have
$Hd(F_v, F_w)<\infty$.
\end{lemma}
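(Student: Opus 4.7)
The plan is to reduce the general statement to the case of adjacent base vertices, and then chain together along an edge path joining $v$ and $w$ in $\BB$. Since $\BB$ is a metric graph, it is connected by definition (see Section \ref{sec-prel}), so any two vertices $v,w\in V(\BB)$ are joined by an edge path $v=b_0,b_1,\ldots,b_n=w$ with $n=d_\BB(v,w)<\infty$.

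Next I would handle the case of a single edge. If $b_1,b_2\in V(\BB)$ are adjacent, then condition (2) of Definition \ref{defn-mgbdl} applied with source fiber $\FF_{b_1}$ says that every vertex of $\FF_{b_1}$ is joined by an edge of $\XX$ to some vertex of $\FF_{b_2}$, hence $V(\FF_{b_1})\subset N_1(\FF_{b_2})$ in $\XX$. Interchanging the roles of $b_1$ and $b_2$, condition (2) also gives $V(\FF_{b_2})\subset N_1(\FF_{b_1})$. Since each $\FF_{b_i}$ is a simplicial graph with unit edge-lengths, every point of $\FF_{b_i}$ lies within distance $1/2$ of a vertex, so extending from vertices to the whole fiber yields $Hd_\XX(\FF_{b_1},\FF_{b_2})\leq 2$ (the precise constant is unimportant; any universal bound independent of the edge suffices).

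Finally I would iterate along the path. Using the triangle inequality for the Hausdorff distance,
\[
Hd_\XX(\FF_v,\FF_w)\;\leq\;\sum_{i=0}^{n-1} Hd_\XX(\FF_{b_i},\FF_{b_{i+1}})\;\leq\;2n\;=\;2\,d_\BB(v,w)\;<\;\infty,
\]
which proves the lemma.

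There is no real obstacle here: the whole content is that the edge-connectivity condition (2) of Definition \ref{defn-mgbdl} is symmetric in the two endpoints of an edge and the base $\BB$ is connected. The only tiny point worth checking is the passage from vertex-level bounds to a bound on the full (edge-filled) fiber, which is handled by the standard observation that interior points of an edge in a simplicial metric graph are within distance $1/2$ of a vertex.
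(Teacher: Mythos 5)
Your proof is correct and follows essentially the same route the paper intends: the paper derives the lemma directly from condition (2) of Definition \ref{defn-mgbdl} for adjacent vertices and then chains along a path in the connected base. The symmetry observation and the vertex-to-edge-point refinement are fine, so nothing further is needed.
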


\begin{comment}
	content...

\proof This is clearly true by c. Once can then use induction on the
distance of $d(v,w)$ to prove it in general.
\qed

The following notion will be important for our purpose too.
\end{comment}

\begin{defn}
Suppose $\XX$ is an $f$-{\em metric graph bundle} over $\BB$.
Given $k\geq 1$ and a connected subgraph $\AAA\subset \BB$, a {\em $k$-qi section} over $\AAA$ is
a map $s:\AAA\map \XX$ such that $s$ is a $k$-qi embedding and $\pi\circ s$ is the identity
map on $\AAA$.
\end{defn}
We shall only  need qi sections over geodesics  in this paper.
We next discuss the main examples of metric graph bundles that we shall refer to later. 

\iffalse
\begin{comment}
The following lemma 
is taken from \cite{mbdl2}. It is very easy to verify and hence provide a proof for the sake of completion.	content...
\end{comment}
\fi

\begin{lemma}\label{lemma: restriction bundle}{\em (Restriction of metric graph bundles \cite[Lemma 3.17]{mbdl2})}
Suppose $\pi:\XX\map \BB$ is an $f$-metric graph bundle and $\BB_1$ is a connected subgraph of $\BB$. Let $\XX_1=\pi^{-1}(\BB_1)$.
Then clearly $\XX_1$ is a connected subgraph of $\XX$. Let    $\pi_1:\XX_1\map \BB_1$
denote the restriction of 
$\pi$ to $\XX_1$. Then $\pi_1:\XX_1\map \BB_1$ is also an $f$-metric graph bundle.
\end{lemma}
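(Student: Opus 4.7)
The plan is to verify, directly from Definition \ref{defn-mgbdl}, the two axioms (1) and (2) for $\pi_1: \XX_1 \to \BB_1$ using \emph{the same} gauge function $f$ as for $\pi$. The task breaks into three bookkeeping checks: (a) that $\XX_1$ is a connected subgraph of $\XX$, so that $\pi_1$ is a well-formed simplicial map of metric graphs; (b) that each fiber $\pi_1^{-1}(b)$ is still connected and uniformly metrically properly embedded in $\XX_1$ with gauge $f$; and (c) that the edge-lifting property along adjacent vertices of $\BB_1$ survives restriction.

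For (a) and (c) together, observe that $\XX_1 = \pi^{-1}(\BB_1)$ is the union of the fibers $\FF_b$ for $b\in V(\BB_1)$ together with all edges of $\XX$ mapping to edges of $\BB_1$ (since $\pi$ is simplicial, every edge of $\XX_1$ is of one of these two types). Each $\FF_b$ is connected by Definition \ref{defn-mgbdl}(1). Now given an edge $[b_1,b_2]$ of $\BB_1$ and any $x_1 \in V(\FF_{b_1})$, Definition \ref{defn-mgbdl}(2) produces a neighbor $x_2 \in V(\FF_{b_2})$ in $\XX$; since $\pi$ is simplicial and $[b_1,b_2]\subset \BB_1$, the connecting edge lies in $\XX_1$. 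This simultaneously gives axiom (2) for $\pi_1$ and, by iterating along any edge path in $\BB_1$ joining $b$ to $b'$ and using fiber connectedness at each vertex, produces an edge path in $\XX_1$ from any vertex of $\FF_b$ to some vertex of $\FF_{b'}$. Since $\BB_1$ is connected, this proves $\XX_1$ is connected.

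For (b), the fiber of $\pi_1$ over $b \in V(\BB_1)$ is literally the same subgraph $\FF_b \subset \XX$, with the same intrinsic path metric $d_b$. The only subtlety is that metric properness must be measured with respect to the induced length metric $d_{\XX_1}$ of $\XX_1$, not $d_{\XX}$. But since any edge path in $\XX_1$ is also an edge path in $\XX$, the inclusion $(\XX_1,d_{\XX_1}) \hookrightarrow (\XX,d_{\XX})$ is $1$-Lipschitz, so $d_{\XX}(x,y) \leq d_{\XX_1}(x,y)$ for all $x,y\in \XX_1$. Consequently, if $x,y \in V(\FF_b)$ satisfy $d_{\XX_1}(x,y) \leq R$ then $d_{\XX}(x,y) \leq R$, and hypothesis (1) for $\pi$ gives $d_b(x,y) \leq f(R)$. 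Thus the same $f$ serves as a properness gauge for the inclusions $\FF_b \hookrightarrow \XX_1$.

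The main obstacle is essentially none: this is pure bookkeeping. The only step that requires a moment of care is the direction of the inequality between $d_{\XX}$ and $d_{\XX_1}$ in (b), which could a priori force a larger properness gauge; it does not, precisely because paths inside $\XX_1$ form a subfamily of paths inside $\XX$, so the restricted metric dominates the ambient one.
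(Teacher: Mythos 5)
Your proof is correct and matches the paper's (essentially omitted) argument: the fibers of $\pi_1$ coincide with those of $\pi$, the $1$-Lipschitz inclusion $\XX_1\hookrightarrow\XX$ gives $d_{\XX}\le d_{\XX_1}$ so the same gauge $f$ works for condition (1), and condition (2) is immediate since the connecting edges project to edges of $\BB_1$. Your extra care about the direction of the inequality and the connectedness of $\XX_1$ is exactly the right bookkeeping.
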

\iffalse
\begin{comment}
	content...

\proof 
Condition (1) of the definition of metric graph bundle is checked because for all $v\in V(\BB_1)$, we have
$\pi^{-1}(v)=\pi^{-1}_1(v)$ and since the inclusion $\XX_1\map \XX$ is $1$-Lipschitz. Condition (2) is also
immediate.
\qed
\end{comment}
\fi
\begin{example}\label{ex: exact sequence}{\em (Metric graph bundles from short exact sequences \cite[Example 1.8]{mahan-sardar}, \cite[Example 5]{mbdl2}.)} 
{\em Suppose we have a short exact sequence of finitely generated groups 
$$1\rightarrow K\stackrel{i}{\rightarrow} G\stackrel{\pi}{\rightarrow} Q\rightarrow 1.$$
Suppose $S$ is a finite generating set of $G$ such that $S$ contains a generating set $A$ of $K$. 
Let $\XX=\Gamma(G,S)$ be the Cayley graph of $G$ with respect to the generating set $S$.
Let $B=\pi(S) \setminus \{1\}$ and $\BB:=\Gamma(Q,B)$ be the Cayley graph of the group $Q$ with respect to the generating set $B$.
Then  the map $\pi$ naturally induces a simplicial map  $\pi:\XX\rightarrow \BB$ between  Cayley graphs.
This is a metric graph bundle. The fibers are the translates of
$\Gamma(K,A)$ under left multiplication by elements of $G$.

Moreover, suppose $Q_1<Q$ is a finitely generated subgroup and $G_1=\pi^{-1}(Q_1)$. Suppose $B$ contains a generating set $B_1$ of $Q_1$.
Let $S_1=S\cap G_1$, $\XX_1=\Gamma(G_1, S_1)$, and $\BB_1=\Gamma(Q_1, B_1)$. Then $\pi$ restricts to a metric graph bundle map
$\pi_1:\XX_1\map \BB_1$ by Lemma \ref{lemma: restriction bundle}.}
\end{example}

%The following example subsumes both metric graph bundles and trees of metric spaces as special cases.

 \noindent {\bf Metric graph bundles  from complexes of groups \cite[Example 3]{mbdl2}:}\\
We refer to \cite{bridson-haefliger} and \cite{haefliger-cplx} for basics on developable complexes of (finitely generated) groups.

\begin{defn}
Suppose $\mathcal Y$ is a finite connected simplicial complex and $\GB(\mathcal Y)$ is a developable complex of finitely generated groups
over $\YY$. Let $G=\pi_1(\GB(\YY))$ be the fundamental group of the complex of groups. (See \cite[Definitions 3.1, 3.5, Chapter III.$\mathcal C$]{bridson-haefliger}).

We shall call $\GB(\mathcal Y)$  a {\em complex of groups with qi condition} if
for all faces $\tau \subset \sigma$ of $\mathcal Y$ the corresponding homomorphism $G_{\sigma}\map G_{\tau}$
is an isomorphism onto a finite index subgroup of $G_{\tau}$.

If moreover all face groups are (nonelementary) hyperbolic then we shall refer to $\GB(\mathcal Y)$ as a
 {\em complex of (nonelementary) hyperbolic groups with qi condition}. 
\end{defn}

\begin{prop}\cite[section 3.2]{mbdl2}\label{prop1: cplx gps}
 Suppose $\mathcal Y$ is a finite connected simplicial complex and $\GB(\mathcal Y)$ is a developable complex of finitely generated groups
with qi condition.

Then there is a metric graph bundle $\pi:\XX\map \BB$ where $G$ acts on both $\XX$ and $\BB$ by isometries such that the following hold.\\
(1) The map $\pi$ is $G$-equivariant.
\\
(2) The $G$-action is proper and cocompact on $\XX$.  The $G$-action is  cocompact (but not necessarily proper) on $\BB$. 
\\
(3) There is an isomorphism of graphs $p: \BB/G\map \YY^{(1)}$ such that for all $\sigma_0\in \YY^{(0)}$, 
 and $v\in p^{-1}(\sigma_0)$, $G_v$ is a conjugate of $G_{\sigma_0}$ in $G$.
\\
(4) For all $v\in V(\BB)$, the $G_v$-action on $V(\FF_v)$ is transitive but the action on $E(\FF_v)$ has a uniformly bounded number of orbits. Thus, the
$G_v$-action on $\FF_v$ is proper and cocompact. 
In particular if all the groups $G_{\sigma}$ are hyperbolic then the fibers of the metric graph bundle $\pi:\XX\map \BB$ are uniformly
hyperbolic. (Note, in particular, that since $\YY$ is finite, the collection of $G_{\sigma}$'s is uniformly
hyperbolic.)\\
(5) Suppose moreover that $\YY_1\subset \YY$ is a connected subcomplex such that the inclusion morphism $\GB(\YY_1)\map \GB(\YY)$
induces an injective homomorphism at the level of fundamental groups (see \cite[Proposition 3.6, Chapter III.$\mathcal C$]{bridson-haefliger}). 
Let $G_1$ be the image of $\pi_1(\GB(\YY_1))$ in $G$. Then we can construct a metric graph bundle $\pi:\XX\map \BB$ as above along with a
connected subgraph $\BB_1\subset \BB$ invariant under $G_1$ such that $p(\BB_1/G_1)=\YY_1$, and $G_1$ acts properly and
cocompactly on $\XX_1=\pi^{-1}(\BB_1)$.
\end{prop}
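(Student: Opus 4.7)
The plan is to build the base $\BB$ from the development of the complex of groups, lay Cayley graphs of the face groups equivariantly over it as fibers, and verify the bundle axioms using the qi hypothesis. The main obstacle will be a coherent $G$-equivariant choice of generating sets and of edges joining adjacent fibers, which is where the finite-index hypothesis on face inclusions (the qi condition) enters crucially to produce uniform metric control.

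\textbf{Base $\BB$ and item (3).} Since $\GB(\YY)$ is developable, it has a simply connected simplicial development $\tilde{\YY}$ on which $G$ acts simplicially with $\tilde{\YY}/G \cong \YY$ and stabilizers conjugate to the face groups. I would let $\BB$ be the $1$-skeleton of the first barycentric subdivision of $\tilde{\YY}$: vertices of $\BB$ are in bijection with simplices of $\tilde{\YY}$. The natural isomorphism $p : \BB/G \to \YY^{(1)}$ and the identification of vertex stabilizers as conjugates of face groups give (3), while cocompactness on $\BB$ is immediate from the finiteness of $\YY$. Properness on $\BB$ typically fails because face groups are infinite.

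\textbf{Total space $\XX$ and items (1), (2), (4).} For each $v \in V(\BB)$ with stabilizer $G_v$, I would set $V(\FF_v) := G_v$ and add horizontal edges forming the Cayley graph of $G_v$ with respect to a finite generating set $T_v$, chosen from a $G$-equivariant family indexed by the finitely many $\YY$-orbits. The qi condition then lets me add vertical edges across each edge of $\BB$, using coset representatives of $G_{v_1} \cap G_{v_2}$ (finite index in each $G_{v_i}$), that match every vertex in one adjacent fiber to a vertex in the other, $G$-equivariantly. Define $V(\XX) := \bigsqcup_v V(\FF_v)$ with these edges, and $\pi : \XX \to \BB$ as fiber projection. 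This is a $G$-equivariant simplicial surjection (item (1)); the $G$-action on $V(\XX)$ is free, since each vertex is a unique group element in a unique fiber, and cocompact by finiteness of $\YY$, giving (2); and for (4), the $G_v$-action on $V(\FF_v) = G_v$ is regular with finitely many edge orbits, while uniform hyperbolicity of fibers in the nonelementary hyperbolic case follows from the finitely many Cayley-graph isomorphism types of fibers.

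\textbf{Bundle axioms and item (5).} Definition \ref{defn-mgbdl}(1), uniform properness of $\FF_v \hookrightarrow \XX$, follows from the Milnor-\v{S}varc lemma applied to the proper cocompact actions $G_v \acts \FF_v$ and $G \acts \XX$: this makes $G_v \hookrightarrow G$ a qi-embedding with constants uniform over $v$ by finiteness of $\YY$. Definition \ref{defn-mgbdl}(2) is built into the vertical edges. For (5), $\pi_1$-injectivity of $\GB(\YY_1) \hookrightarrow \GB(\YY)$ implies the development of $\GB(\YY_1)$ embeds $G_1$-equivariantly as a connected subcomplex of $\tilde{\YY}$; I would take $\BB_1 \subset \BB$ to be the barycentric $1$-skeleton of this subcomplex and $\XX_1 := \pi^{-1}(\BB_1)$, and observe that applying the above construction directly to $\GB(\YY_1)$ recovers $\XX_1 \to \BB_1$, yielding the proper cocompact $G_1$-action on $\XX_1$.
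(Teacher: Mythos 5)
The paper itself does not prove Proposition~\ref{prop1: cplx gps}; it is imported from \cite[Section 3.2]{mbdl2}, so your construction has to be measured against the standard one there. Your overall architecture --- build the base from the development $\tilde\YY$, lay cosets of the face groups over it as fiber vertex sets with equivariantly chosen Cayley-graph (horizontal) edges, and use the finite-index (qi) condition to install vertical edges --- is indeed the right one. But one step of your verification is genuinely wrong. To check Definition~\ref{defn-mgbdl}(1) you assert that Milnor--{\v S}varc applied to $G_v\acts \FF_v$ and $G\acts\XX$ ``makes $G_v\hookrightarrow G$ a qi-embedding.'' This is false in general: the face groups are typically heavily distorted in $G$ (if they were qi embedded, the fibers would be quasiconvex and most of this paper would be vacuous). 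What Milnor--{\v S}varc actually gives is that $\FF_v\map\XX$ agrees, up to uniform quasi-isometries, with the inclusion $G_v\map G$ of a finitely generated subgroup, and such inclusions are only \emph{uniformly metrically proper} (Lemma~\ref{lem: prop emb}(1)) --- which is exactly what Definition~\ref{defn-mgbdl}(1) asks for; uniformity over the infinitely many $v\in V(\BB)$ then comes from the finitely many $G$-orbits of fibers and equivariance. So the conclusion you need is true, but the argument you offer proves something stronger and false.

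A second, more cosmetic mismatch: with $\BB$ the $1$-skeleton of the \emph{barycentric subdivision} of $\tilde\YY$, the quotient $\BB/G$ is the $1$-skeleton of the barycentric subdivision of $\YY$, whose vertices are all simplices of $\YY$; this is not isomorphic to $\YY^{(1)}$, so item (3) fails as stated for your choice of base. Taking $\BB=\tilde\YY^{(1)}$ fixes this: adjacent vertices of $\BB$ then span an edge of $\tilde\YY$ whose stabilizer has finite index in both vertex stabilizers by the qi condition, which is what you need to install the vertical edges --- and do note that Definition~\ref{defn-mgbdl}(2) is a two-sided requirement, so you must use finitely many right-coset representatives on \emph{both} sides of the edge stabilizer, not just a diagonal matching. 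The remaining items --- freeness and cocompactness of the $G$-action on $\XX$, the regular $G_v$-action on $V(\FF_v)$ with boundedly many edge orbits, uniform hyperbolicity from the finitely many isomorphism types, and the identification in (5) of $\BB_1$ with (the $1$-skeleton of) the $G_1$-invariant component of the preimage of $\YY_1$ in $\tilde\YY$, which is the development of $\GB(\YY_1)$ by $\pi_1$-injectivity --- are correct in outline.
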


\begin{rem}{\em
We note that $\BB$ in the above proposition is quasi-isometric to the metric graph obtained from the Cayley graph of $G$
after {\it coning off} (see \cite[Section 3.1]{farb-relhyp}) the various cosets of the face subgroups of $G$. A similar remark applies to $\BB_1$
in  part (5) of the above proposition. Later in the paper we shall also assume that $\BB_1$
is qi embedded in $\BB$. Since this hypothesis is used repeatedly we abstract this out
as a definition for  convenience of  exposition.

}
\end{rem}

\begin{defn}\label{defn: good subcomplex}
Suppose $\GB(\mathcal Y)$ is a developable complex of groups with qi condition and $\YY_1$ is a connected subcomplex of $\YY$. We shall say that $\YY_1 $ is
{\em injective} if the inclusion morphism $\GB(\YY_1)\map \GB(\YY)$ induces an injective homomorphism at the level of fundamental groups.

Let $G_1$ be the image of $\pi_1(\GB(\YY_1))$ in $\pi_1(\GB(\YY))=G$, say. Then $\YY_1 $ is {\em cone-injective} if coning off the cosets of all the face subgroups of $G$ gives 
a qi embedding from the coned off Cayley graph of $G_1$ to the coned off Cayley graph of $G$ (or equivalently the $G_1$-equivariant inclusion 
$\BB_1\map  \BB$ as in Proposition \ref{prop1: cplx gps}(5) is a qi embedding).

We shall say that $\YY_1$ is {\em good} if it is both injective and 
cone-injective.

\end{defn}

We  have the following corollary. 
\begin{cor}\label{cor: fiber limit set}
Suppose $\GB(\YY)$ is a developable complex of infinite hyperbolic groups with qi condition over
a finite connected simplicial complex $\YY$ and suppose $G=\pi_1(\GB(\YY))$
is also hyperbolic. Then for all $y\in V(\YY)$, $g\in G$ we have $\Lambda_G(gG_yg^{-1})=\partial G$.
\end{cor}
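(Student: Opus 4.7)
The plan is to use the metric graph bundle $\pi:\XX\to\BB$ provided by Proposition \ref{prop1: cplx gps}. Since the $G$-action on $\XX$ is proper and cocompact, the Milnor--{\v S}varc lemma gives a $G$-equivariant quasi-isometry from a Cayley graph of $G$ to $\XX$, and I identify $\partial G = \partial\XX$ throughout. By parts (3) and (4) of that proposition, every conjugate $gG_yg^{-1}$ is realised as the stabiliser $G_v$ of some vertex $v\in V(\BB)$, with $G_v$ acting cocompactly on the fiber $\FF_v$. A $G_v$-orbit in $\XX$ is therefore at finite Hausdorff distance from $\FF_v$, so by Lemma \ref{limit set lemma}(1) one has $\Lambda_G(gG_yg^{-1}) = \Lambda_\XX(\FF_v)$. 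It suffices to prove $\Lambda_\XX(\FF_v) = \partial\XX$.

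Set $\Lambda := \Lambda_\XX(\FF_v)$. The main step is to show that $\Lambda$ is a nonempty closed $G$-invariant subset of $\partial\XX$. Closedness is Lemma \ref{limit set lemma}(0), and non-emptiness follows because the infinite group $G_v$ acts cocompactly on $\FF_v$, forcing $\FF_v$ to be unbounded. For $G$-invariance, the map $\pi$ is $G$-equivariant by Proposition \ref{prop1: cplx gps}(1), so $g\FF_v = \FF_{gv}$ for every $g\in G$; separately, Lemma \ref{bundle lemma1} gives $Hd(\FF_v,\FF_w)<\infty$ for every pair $v,w\in V(\BB)$, whence by Lemma \ref{limit set lemma}(1) the limit set $\Lambda_\XX(\FF_w)$ is independent of $w$. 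Combining these, $g\Lambda = \Lambda_\XX(\FF_{gv}) = \Lambda$ for every $g\in G$.

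To conclude I would invoke the standard minimality of the action of a non-elementary hyperbolic group on its boundary: the only nonempty closed $G$-invariant subset of $\partial G$ is $\partial G$ itself. This is exactly the ingredient that powers Lemma \ref{limit set lemma}(2)(i) for normal subgroups, and the same conclusion yields $\Lambda=\partial G$. If $G$ happens to be elementary, it is two-ended (being infinite, as it contains $G_y$), and then the infinite subgroup $G_v$ has finite index in the virtually cyclic group $G$, so $\Lambda_G(G_v)=\partial G$ holds directly. The main conceptual point---and the only place where something beyond the quoted lemmas is used---is recognising the $G$-invariance of $\Lambda$ via $G$-permutation of fibers rather than via normality; I do not anticipate any further technical obstacle.
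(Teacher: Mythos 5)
Your proof is correct and follows essentially the same route as the paper's: both pass to the metric graph bundle of Proposition \ref{prop1: cplx gps}, use Lemma \ref{bundle lemma1} to see that all fibers (equivalently, all conjugates and cosets of the vertex group) share the same nonempty closed limit set, and conclude by minimality of the boundary action via Lemma \ref{ct elem lemma}(2). The only cosmetic difference is that you argue directly in $\XX$ through $g\FF_v=\FF_{gv}$, whereas the paper pulls the Hausdorff-distance statement back to the Cayley graph using Lemma \ref{Hd lemma} and phrases invariance as $\Lambda_G(K)=\Lambda_G(xK)=x\Lambda_G(K)$; your explicit handling of the elementary case is a small bonus the paper leaves implicit.
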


\begin{proof}
Let $\pi:\XX\map \BB$
 be a metric graph bundle satisfying the properties of Proposition \ref{prop1: cplx gps}.
Let $K=gG_vg^{-1}$. Then for all $x\in G$, $K$ and $xKx^{-1}$ each fix a vertex of $\BB$, say $u,w$ respectively. Also $\FF_u, \FF_w$ are invariant under $K, xKx^{-1}$ respectively and these two induced actions are cocompact. 
By Lemma \ref{bundle lemma1} $Hd(\FF_u,\FF_w)<\infty$. Since the $G$-action on $\XX$ is proper and cocompact,
Lemma \ref{Hd lemma} then implies that $Hd(K, xKx^{-1})<\infty$. Clearly, $Hd(xK, xKx^{-1})\leq d(1,x)<\infty$.
Thus $Hd(K, xK)<\infty$ for all $x\in G$. Hence, $\Lambda_G(K)=\Lambda_G(xK)$ for all $x\in G$ by Lemma \ref{limit set lemma}(1).
Also, since $K$ is an infinite subgroup of $G$, $\Lambda_G(K)\neq \emptyset$ by Lemma \ref{limit set lemma}(0).
Finally, by Lemma \ref{ct elem lemma}(2) we have $\Lambda_G(K)=\partial G$ because $\Lambda_G(K)=\Lambda_G(xK)=x\Lambda_G(K)$
for all $x\in G$ whence $\Lambda_G(K)$ is a nonempty closed $G$-invariant subset of $\partial G$.
\end{proof}

\iffalse
 This property is equivalent to the property that the simplicial map $D(\YY_1, \phi_1)\map D(\YY,\phi)$
is a qi embedding (see \cite[Theorem 2.18, Chapter III.$\mathcal C$]{bridson-haefliger} and is explicitly remarked on in \cite[Section 3]{mbdl2}). Here $\phi_1:\GB(\YY_1)\map \pi_1(\GB(\YY_1))=G_1$ and
$\phi:\GB(\YY)\map \pi_1(\GB(\YY))=G$ are the natural morphisms (see \cite[Definitions 3.1, 3.5, Chapter III.$\mathcal C$]{bridson-haefliger}).
\fi

\begin{defn}{\em (Trees of hyperbolic metric spaces \cite{BF})}
Suppose $\TT$ is a tree and $\XX$ is a metric space. Then a map $\pi:\XX\map \TT$ is called a tree of hyperbolic metric spaces
with qi embedded condition if there are constants $\delta\geq 0$, $K\geq 1$ and a function 
$f:\NN\map \NN$ with the following properties: 

(1) For all $v\in V(\TT)$, $\XX_v=\pi^{-1}(v)$ is a geodesic metric space with the induced path metric $d_v$, induced from
$\XX$. Moreover, with respect to these metrics the inclusion maps $\XX_v \map \XX$ are uniformly metrically proper
as measured by $f$.

(2) Suppose $e$ is an edge of $\TT$ joining $v,w\in V(\TT)$ and $m_e\in \TT$ is the midpoint of this edge.
Then $\XX_e=\pi^{-1}(m_e)$ is a geodesic metric space with respect to the induced path metric $d_e$ from $\XX$. 
Let $[v,w]$ denote the edge $e$ from $v$ to $w$. Then
moreover, there is a map $\phi_e:\XX_e\times [v,w]\map \pi^{-1}(e)\subset \XX$ such that\\ (i) $\pi\circ \phi_e$
is the projection map onto $[v,w]$. \\ (ii) $\phi_e$ restricted to $(v,w)\times \XX_e$ is an isometry onto $\pi^{-1}(\stackrel{\circ}{e})$ 
where $\stackrel{\circ}{e}$ is the interior of $e$.\\ (iii) $\phi_e$ restricted to $\XX_e\times \{v\}$ and
$\XX_e\times \{w\}$ are $K$-qi embeddings from $X_e$ into $\XX_v$ and $\XX_w$ respectively with respect to the
induced path metric $d_e$ on $\XX_e$, and $d_v, d_w$ on $\XX_v, \XX_w$ respectively.
\end{defn}

Given a tree of hyperbolic metric spaces with qi embedded condition it is convenient to replace the vertex and
edge spaces by quasi-isometric metric graphs and glue them using the maps $\phi_e$ to get a tree of hyperbolic metric graphs.
This is an example of a tree of metric graphs obtained by discretizing the classical Bass-Serre tree
of spaces (see \cite{scott-wall} for a topological exposition of Bass-Serre theory and \cite[Section 3]{ps-limset} for the discretized version).
The universal cover of a finite graph of spaces is a source of examples for
a tree of metric spaces \cite{scott-wall}. The notion of a graph of groups satisfying the qi embedded condition was introduced by Bestvina-Feighn in
\cite{BF}. Such a graph of groups gives rise to a tree of metric spaces satisfying the qi embedded condition, where the underlying tree is the corresponding Bass-Serre tree.

\iffalse
\begin{comment}
\begin{prop}\label{prop limset}
Suppose $(\GG, \YY)$ is a finite graph of finitely generated groups. Let $\TT$ be the Bass-Serre tree.
Then there is a canonically constructed graph $\XX$ on which $G=\pi_1(\GG,\YY)$ acts properly with finite 
quotient and there is a simplicial map $\pi:\XX\map \TT$ which is equivariant under the $G$-action. 
Moreover,\\ (i) if $\phi:G\map \XX$ is an orbit map then there exists
 $D\geq 0$ such that for all $y\in \YY$, 
$g\in G$ we have $Hd(\phi(gG_y), \XX_v)\leq D$ where $v=gG_y\in V(\TT)$ and $X_v=\pi^{-1}(v)$.\\
 (ii) 
$X_v$ is uniformly quasi-isometric to the Cayley graph of $G_y$ if one fixes a generating set for $G_y$ 
once and for all.
\end{prop}

The map $\pi:\XX\map \TT$ is a `coarse' tree of metric graphs.
\end{comment}
\fi

\subsection{Cannon-Thurston maps} In this subsection, we collect together various existence theorems for
Cannon-Thurston maps along with properties of CT laminations 
\begin{defn}\cite{CTpub,mitra-ct,mitra-survey}
Suppose $f:Y\map X$ is a map between hyperbolic metric spaces. We say that $f$ admits a Cannon-Thurston map (or a CT map for short)
if $f$ induces a continuous map $\partial f:\partial Y\map \partial X$. Equivalently, for all 
$\xi\in \partial Y$, there exists $\partial f(\xi) \in \partial X$ such that for
any sequence $\{y_n\}$ in $Y$ converging to $\xi$, $\{f(y_n)\}$ converges to $\partial f(\xi)$. Further,
$\partial f$ is required to be continuous.
\end{defn}
We refer to \cite{CTpub} for the origin of CT maps and to \cite{mahan-icm} for a survey.
Suppose $H<G$ are hyperbolic groups. Suppose $\Gamma_G$, $\Gamma_H$ are Cayley graphs of $G,H$ respectively
with respect to some finite generating sets. Since $G$ is identified with $V(\Gamma_G)$,
we have a natural map $i: H\map \Gamma_G$. This map can be extended to a coarsely well-defined map $\Gamma_H\map \Gamma_G$
by sending any point on an edge joining $h_1, h_2\in H$ to $i(h_1)$ or $i(h_2)$. If the map $\Gamma_H\map \Gamma_G$
admits a CT map then we  say that the inclusion map $H\map G$ admits a CT map. 

\iffalse
It is easy to check that 
(see Lemma \ref{ct elem lemma} below) this does not depend on the choices of the particular Cayley graphs since any two
Cayley graphs of a group defined with respect to two finite generating sets are naturally quasi-isometric (cf.\ Lemma \ref{ct elem lemma}(2)).}

	content...

One notes that once a CT map is admissible then it is unique. 
Generally one assumes that $f$ is an inclusion map where $Y$ is given the intrinsic path metric from $X$
whence $f$ is automatically Lipschitz. However, in \cite{mbdl2} it was pointed out that such hypotheses 
are unnecessary Cannon-Thurston map originated from the celebrated work of Cannon and Thurston (\cite{CTpub}) 
in the context of hyperbolic $3$-manifolds but the above definition in Geometric Group Theory arose out of  
work of the first author (\cite{mitra-ct}).  One is refereed to \cite{mahan-icm} for an account of history of the mathematical 
development around CT maps in Geometric Group Theory and Kleinian groups.
\fi

 The first and fourth parts of the following lemma are standard
and follow from the definitions of CT maps and limit sets. For the second part  see \cite[Chapter III.H]{bridson-haefliger}
for instance. The third part follows exactly as in \cite[Lemma 2.1]{mitra-pams}.

\begin{lemma}{(Properties of CT maps)}\label{ct elem lemma}
(1) Suppose $X,Y, Z$ are hyperbolic metric spaces and there
exist maps $g:Z\map Y, f: Y\map X$ admitting
CT maps $\partial g:\partial Z\map \partial Y$, $\partial f:\partial Y\map \partial X$. Then the composition
$f\circ g:Z\map X$ admits a CT map and $\partial (f\circ g)=\partial f\circ \partial g$.\\
(2) If $f:Y\map X$ is a qi embedding then there is an injective CT map $\partial f: \partial Y\map \partial X$. A
CT map induced by a quasi-isometry is a homeomorphism.
In particular the action of a hyperbolic group $G$ on its Cayley graph induces an action of $G$ on $\partial G$ by homeomorphisms.
This action is minimal, i.e. there is no proper nonempty closed subset of $\partial G$ invariant under $G$, provided $G$ is non-elementary.\\
(3) Suppose that 
a hyperbolic group $G$ acts by isometries on a hyperbolic metric space $X$ and the action
is properly discontinuous. Suppose $x\in X$ and that
  the orbit map $h:G\map X$ given by $h(g)= gx$ admits a CT map.
If $h$ is not a qi embedding then there are points $\xi_1\neq \xi_2\in \partial G$ such that
$\partial h(\xi_1)=\partial h(\xi_2)$. 
In particular this is true for a hyperbolic subgroup $H$ of a hyperbolic group $G$ if the inclusion
$H\map G$ admits a CT map.\\
(4) If $H<G$ are hyperbolic groups and the inclusion $i: H\map G$ admits a CT map $\partial i:\partial H\map \partial G$
then $\partial i(\partial H)=\Lambda_G(H)$. 
\end{lemma}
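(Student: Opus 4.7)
The plan is to handle the four clauses in order, treating (1), (2), (4) by standard boundary bookkeeping and concentrating the real work on (3).

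For (1), I would simply chase definitions. Given $\xi \in \partial Z$ and any sequence $z_n \to \xi$ in $Z \cup \partial Z$, the CT map $\partial g$ yields $g(z_n) \to \partial g(\xi)$ in $Y \cup \partial Y$, and then $\partial f$ yields $f(g(z_n)) \to \partial f(\partial g(\xi))$. Hence $f \circ g$ admits the CT map $\partial f \circ \partial g$, which is continuous as a composition of continuous maps. For (2), I would invoke stability of quasigeodesics: a qi embedding $f : Y \to X$ sends each geodesic ray in $Y$ to a quasigeodesic in $X$ that fellow-travels a unique geodesic ray, whose endpoint defines $\partial f(\xi)$; the lower bound of the qi inequality forces inequivalent rays in $Y$ to have unbounded Hausdorff separation in $X$, which gives injectivity. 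When $f$ is a quasi-isometry a quasi-inverse induces the inverse boundary map, so $\partial f$ is a homeomorphism; in particular left translation by each $g \in G$ on $\Gamma_G$ induces a homeomorphism of $\partial G$. Minimality for non-elementary $G$ follows by the usual North-South dynamics: any loxodromic $g \in G$ has distinct attracting/repelling fixed points $g^{\pm \infty} \in \partial G$; iterating $g^n$ on any point other than $g^{-\infty}$ drives it to $g^{+\infty}$, and the set of attracting fixed points of conjugates $hgh^{-1}$ is dense in $\partial G$, so any nonempty closed $G$-invariant subset must equal $\partial G$.

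For (3), I would follow Mitra's argument in \cite[Lemma 2.1]{mitra-pams}. Since $h$ is $G$-equivariant and not a qi embedding, one finds $a_n, b_n \in G$ with $d_G(a_n, b_n) \to \infty$ and $d_G(a_n, b_n)/d_X(h(a_n), h(b_n)) \to \infty$. Set $c_n = a_n^{-1} b_n$ and $L_n = d_G(1, c_n)$; equivariance gives $d_X(x, c_n x)/L_n \to 0$, and proper discontinuity together with properness of $X$ forces $d_X(x, c_n x) \to \infty$ as well (otherwise the $c_n$ would lie in a finite set). Pick geodesics $\alpha_n$ in $\Gamma_G$ from $1$ to $c_n$ and midpoints $p_n$, so $d_G(1, p_n) = d_G(p_n, c_n) = L_n/2$. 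Passing to a subsequence, $p_n \to \zeta \in \partial G$ (if $p_n$ stayed bounded then $h(p_n)$ would be bounded as well, contradicting the existence of the CT map applied to a divergent subsequence); likewise $c_n \to \xi \in \partial G$ and $c_n^{-1} p_n \to \zeta'$ after translating. The Gromov-product estimate $(p_n \mid c_n)_1 = L_n/2 \to \infty$ shows $\zeta = \xi$, whereas comparing $p_n$ to $1$ and $c_n^{-1} p_n$ to $c_n^{-1}$ shows the two divergent sequences $\{p_n\}$ and $\{c_n^{-1} p_n\}$ have limits in $\partial G$ forced apart by the condition $d_X(x, c_n x)/L_n \to 0$, yet both images $h(p_n)$ and $h(c_n^{-1} p_n) = c_n^{-1} h(p_n)$ converge to the same point of $\partial X$ (their $X$-distance is bounded in comparison to $L_n$ while both lie at $X$-distance $\to \infty$ from $x$). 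This yields the desired $\xi_1 \ne \xi_2$ in $\partial G$ with $\partial h(\xi_1) = \partial h(\xi_2)$.

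For (4), I would prove both inclusions. For $\partial i(\partial H) \subseteq \Lambda_G(H)$: given $\xi \in \partial H$, take $h_n \in H$ with $h_n \to \xi$ in $\Gamma_H$; then $i(h_n) \to \partial i(\xi)$, and $d_H(1, h_n) \to \infty$ together with local finiteness of $\Gamma_G$ forces $d_G(1, h_n) \to \infty$, so $\partial i(\xi) \in \Lambda_G(H)$. Conversely, any $\eta \in \Lambda_G(H)$ is a limit $i(h_n) \to \eta$ with $d_G(1, h_n) \to \infty$; local finiteness of $\Gamma_H$ forces $d_H(1, h_n) \to \infty$, and after passing to a subsequence $h_n \to \xi \in \partial H$, so $\eta = \partial i(\xi) \in \partial i(\partial H)$. (Compactness of $\partial H$ ensures $\partial i(\partial H)$ is closed, so no further accumulation points need be considered.)

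The main obstacle is clause (3): the delicate part is producing the two sequences whose limits in $\partial G$ are \emph{distinct} while their images in $X$ accumulate at a common point of $\partial X$. This requires simultaneously exploiting hyperbolicity of $\Gamma_G$ (via Gromov-product control on midpoints of long geodesics) and of $X$ (to ensure the $h$-images of folded paths do collapse at infinity), and proper discontinuity to rule out the trivial bounded case.
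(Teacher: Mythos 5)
Parts (1), (2) and (4) of your proposal are fine: they are the standard arguments, and they match what the paper intends (the paper itself only cites \cite{bridson-haefliger} for (2) and declares (1), (4) standard). The problem is part (3), which is precisely the clause the paper delegates to \cite[Lemma 2.1]{mitra-pams}, and your argument there has a genuine gap. You take $c_n$ with $L_n=d_G(1,c_n)\to\infty$ and $d_X(x,c_nx)=o(L_n)$, and propose the pair of sequences $p_n$ (midpoint of $[1,c_n]$ in $\Gamma_G$) and $c_n^{-1}p_n$. But (a) their limits in $\partial G$ need not be distinct: your own Gromov-product computation shows $p_n$ converges with $c_n$ and $c_n^{-1}p_n$ converges with $c_n^{-1}$, so distinctness amounts to $\lim c_n\neq\lim c_n^{-1}$, which is not forced by any hypothesis (e.g.\ $c_n=a^nba^{-n}$ in a free group has $\lim c_n=\lim c_n^{-1}$); and (b) their images need not be identified in $\partial X$: $d_X(h(p_n),h(c_n^{-1}p_n))=d_X(c_np_nx,p_nx)$ is only bounded by a quantity of the same order $L_n$ as $d_X(x,h(p_n))$, so the Gromov product $(h(p_n)\mid h(c_n^{-1}p_n))_x$ has no reason to diverge. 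Neither half of the conclusion follows from "$d_X(x,c_nx)/L_n\to 0$" as claimed.

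The actual mechanism of \cite[Lemma 2.1]{mitra-pams} is different and hinges on choosing the right point to recenter at. Since $h$ is Lipschitz and (by proper discontinuity) metrically proper, failure of qi embedding is equivalent, via the Morse lemma, to the existence of $a_n,b_n\in G$ and $q_n\in[a_n,b_n]_{\Gamma_G}$ with $d_X\bigl(h(q_n),[h(a_n),h(b_n)]_X\bigr)\to\infty$. Translating by $q_n^{-1}$ and setting $u_n=q_n^{-1}a_n$, $v_n=q_n^{-1}b_n$, equivariance gives $d_X\bigl(h(1),[h(u_n),h(v_n)]_X\bigr)\to\infty$, hence $(h(u_n)\mid h(v_n))_{h(1)}\to\infty$, while $1$ lies on $[u_n,v_n]_{\Gamma_G}$, so $(u_n\mid v_n)_1^{\Gamma_G}=0$. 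Properness forces $d_G(1,u_n),d_G(1,v_n)\to\infty$; after passing to subsequences $u_n\to\xi_1$, $v_n\to\xi_2$ in $\partial G$, the vanishing Gromov product gives $\xi_1\neq\xi_2$, and the divergent Gromov product of the images, combined with the convergence supplied by the CT map, gives $\partial h(\xi_1)=\partial h(\xi_2)$. In short: the two boundary points come from the two \emph{endpoints} of a geodesic recentered at its "deepest" point relative to the image geodesic in $X$, not from a midpoint and its translate. You should replace your part (3) with this argument (or simply with the citation, as the paper does).
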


The following is an immediate consequence of the Milnor-{\v S}varc lemma and the
above Lemma.

\begin{cor}\label{cor: ct lemma}
Suppose $G$ is a hyperbolic group acting on a hyperbolic metric space $X$  properly and
cocompactly by isometries. Suppose that $H$ is a finitely generated subgroup of $G$ and that there is a subset $Y$ of $X$ invariant under
the $H$-action with the following properties:\\ (1) $Y$ is a hyperbolic metric space with respect to
the induced length metric from $X$.\\ (2) The inclusion $Y\map X$ admits a CT map. \\(3) The $H$-action
on $Y$ is proper and cocompact. \\ Then $H$ is hyperbolic and the inclusion $H\map G$ admits a CT map.
\end{cor}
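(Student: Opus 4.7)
The plan is to deduce both conclusions as essentially formal consequences of the Milnor–{\v S}varc lemma and Lemma \ref{ct elem lemma}. First I would dispose of the hyperbolicity of $H$. By hypothesis (3), $H$ acts properly and cocompactly on $Y$, which is a proper geodesic metric space with respect to its induced length metric (one needs to check properness, but this follows because $Y$ is closed in the proper space $X$ and the induced length metric agrees, up to quasi-isometry, with any Cayley-graph model coming from the cocompact action). Hence by the Milnor–{\v S}varc lemma any orbit map $\phi_H : H \to Y$, $h \mapsto h \cdot y_0$, is a quasi-isometry. Since $Y$ is hyperbolic by (1), $H$ is hyperbolic, and by Lemma \ref{ct elem lemma}(2), $\phi_H$ induces a homeomorphism $\partial \phi_H : \partial H \to \partial Y$.

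Next I would set up the analogous picture for $G$. Since $G$ acts properly and cocompactly by isometries on $X$, the orbit map $\phi_G : G \to X$, $g \mapsto g \cdot x_0$, is again a quasi-isometry by Milnor–{\v S}varc, and by Lemma \ref{ct elem lemma}(2) it admits a homeomorphic CT extension $\partial \phi_G : \partial G \to \partial X$. I would choose the basepoint for the $G$-action to be $x_0 = y_0 \in Y$, which is allowed because $Y \subset X$ and we are free to move $x_0$ by a bounded amount. With this choice, the two compositions $\phi_G \circ i$ and $j \circ \phi_H$ agree as maps $H \to X$, where $i : H \hookrightarrow G$ is the inclusion of Cayley graphs and $j : Y \to X$ is the inclusion.

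Now the CT map for $i$ assembles by composition. By hypothesis (2), $j$ admits a CT map $\partial j : \partial Y \to \partial X$. Combining with $\partial \phi_H$ via Lemma \ref{ct elem lemma}(1), the composition $j \circ \phi_H : H \to X$ admits a CT map. Since this equals $\phi_G \circ i$, the latter also admits a CT map $\partial H \to \partial X$. Finally, $\partial \phi_G : \partial G \to \partial X$ is a homeomorphism, so precomposing by its inverse (which is itself a CT map, induced by a quasi-inverse of $\phi_G$) and appealing once more to Lemma \ref{ct elem lemma}(1) yields a continuous extension $\partial i : \partial H \to \partial G$. This is the desired CT map for the inclusion $H \hookrightarrow G$.

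There is no genuine obstacle here; everything is diagram-chasing with the existence/naturality results already recorded in Lemma \ref{ct elem lemma}. The only small care required is the choice of a common basepoint so that the two orbit maps actually fit into the commutative square $\phi_G \circ i = j \circ \phi_H$, and the observation that the Milnor–{\v S}varc lemma applies to the $H$-action on $Y$ with its induced length metric (which is proper because $X$ is, and $Y$ is a geodesic metric space by (1)).
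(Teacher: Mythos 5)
Your proof is correct and follows essentially the same route as the paper's: apply the Milnor--{\v S}varc lemma to identify $H$ with $Y$ and $G$ with $X$ via orbit maps, compose the resulting boundary homeomorphisms with the CT map for $Y\hookrightarrow X$ using Lemma \ref{ct elem lemma}(1)--(2), and transfer back along the homeomorphism $\partial G\to\partial X$. The paper phrases the final step in terms of convergent sequences (using uniform metric properness of $H\hookrightarrow G$ to see that $h_n\to\infty$ in $G$), but this is the same argument.
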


The non-injectivity of CT maps motivates the following definition which will be crucial in this paper, cf.\ \cite{mitra-endlam, mahan-rafi}.
\begin{defn}
Suppose $f:Y\map X$ is a map between hyperbolic metric spaces which admits a CT map.
Then the {\em Cannon-Thurston lamination} ({\em or  CT lamination} for short) for this map is given by 
$$ \LL_{X}(Y)=\{ (\xi_1,\xi_2)\in \partial Y\times \partial Y: \xi_1\neq \xi_2, \, \partial f(\xi_1)=\partial f(\xi_2)\}.$$

If $\alpha$ is a (quasi)geodesic line in $Y$ such that $(\alpha(-\infty), \alpha(\infty))\in \LL_{X}(Y)$ then we say that
$\alpha$ is a {\em leaf} of the CT lamination $ \LL_{X}(Y)$.
\end{defn}

Let $f:Y\map X$ be a map between hyperbolic metric spaces  admitting a CT map. Let $Z \subset Y$ be a qi-embedded subset, so that $Z$ is hyperbolic and $\partial Z$ embeds in $\partial Y$ (Lemma \ref{limit set lemma}). 

\begin{defn}\label{defn-carry}
We say that a leaf $\alpha \subset Y$ of $\LL_{X}(Y)$ is carried by $Z$ if $\alpha$
lies in a bounded neighborhood of $Z$. Equivalently (by quasiconvexity of $Z$), $\alpha(\pm \infty)\in \partial Z (\subset \partial Y)$.
\end{defn}

%We deduce a simple Corollary of 
 
\iffalse
\begin{comment}
\begin{cor}\label{general cor before}
	Suppose that $G$ is a hyperbolic group and $H,K$ are two subgroups where $H$ is finitely generated.
 and $\Lambda_G(H)\neq \partial G$ but $\Lambda_G(K)=\partial G$.
	Then (1) $A/K_1$ is finite. In particular $A$ is contained in a finite neighborhood of $K_1$. (2) $[K:K\cap H]=\infty$. Hence $[K:K_1]=\infty$. 
	
Moreover, these conclusions hold if $H$ is qi embedded in $G$, $[G:H]=\infty$ and $\partial G=\Lambda_G(K)$.
\end{cor}
\begin{proof} Since $G$ is hyperbolic it is finitely presented by a theorem of Rips. See for instance \cite[Corollary 2.2A]{gromov-hypgps}.
	Since $\Lambda_G(K)=\partial G$, $[K:K\cap H]=\infty$ for otherwise we would have
	$\partial G=\Lambda_G(K)=\Lambda_G(K\cap H)\subset \Lambda_G(H)$, i.e. $\Lambda_G(H)= \partial G$ which is
	a contradiction.
	
	The last statement follows because $\Lambda_G(H)$ is a proper closed subset of $\partial G$ if
	$H$ is qi embedded in $G$ and $[G:H]=\infty$.
\end{proof}\end{comment}
\fi
A consequence of Proposition \ref{general thm} is the following.

\begin{cor}\label{general cor}
Suppose $G_1<G$ is a hyperbolic subgroup of a hyperbolic group such that the inclusion $G_1\map G$ admits a CT map.
Further let $H, K$ be hyperbolic subgroups of $G_1$ with the following properties:\\
\noindent
(i) The inclusion $K\map G_1$ admits a CT map.\\
(ii) $\Lambda_{G_1}(K)=\partial G_1$.\\
(iii) $H$ is a qi embedded subgroup of $G_1$ with $[G_1:H]=\infty$. \\
(iv) $H$ is not qi embedded in $G$.\\
Then a CT map for the pair $(H,G)$ exists and for any leaf $\alpha$ of the CT lamination $\LL_G(H)$ which is contained in a finite neighborhood of $K$
there exists a finitely generated subgroup $K_1$ of $H\cap K$, such that 
the following hold: \\
(1) $\alpha$ is contained in a finite neighborhood of $K_1$ (in a Cayley graph of $G$). \\
(2)  $\Lambda_K(K_1)\neq \partial K$. In particular, $[K:K_1]=\infty$.\\
Hence,\\
(3) $K_1$ is not qi embedded in $G$, i.e. $K_1$ is distorted in $G$; and\\
(4)  if $K_1$ is qi embedded in $K$ then $K_1$ supports a leaf of the CT lamination $\LL_G(K)$.
\end{cor}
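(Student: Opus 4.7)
The plan is to apply Proposition~\ref{general thm} with $A$ taken to be the vertex set of the leaf $\alpha$, regarded as a bi-infinite geodesic in $\Gamma_H$. Before invoking it, I need to produce the CT map for the pair $(H,G)$: since $H$ is qi embedded in $G_1$ by (iii), Lemma~\ref{ct elem lemma}(2) gives a CT map $\partial H \to \partial G_1$, and composing with the CT map $\partial G_1 \to \partial G$ from the hypothesis via Lemma~\ref{ct elem lemma}(1) yields the CT map $\partial H \to \partial G$, so $\LL_G(H)$ is well-defined. Letting $A = V(\alpha)$, I observe that $A$ is an infinite subset of $H \cap N_D(K)$ and that any two points of $A$ are joined in $\Gamma_H$ by a sub-arc of $\alpha$ lying within the $1$-neighborhood of $A$. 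Proposition~\ref{general thm} will then produce the required finitely generated infinite $K_1 < H \cap K$ with $A/K_1$ finite, giving (1).

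For (2), I will argue by contradiction. Suppose $\Lambda_K(K_1) = \partial K$. A standard limit-set chase using the CT map $\partial j:\partial K \to \partial G_1$ (from (i)), uniform properness of $K \hookrightarrow G_1$, and Lemma~\ref{ct elem lemma}(4) combined with hypothesis (ii) yields $\Lambda_{G_1}(K_1) = \partial j(\Lambda_K(K_1)) = \partial j(\partial K) = \Lambda_{G_1}(K) = \partial G_1$. But $K_1 \subset H$ forces $\Lambda_{G_1}(K_1) \subset \Lambda_{G_1}(H)$, and (iii) together with Lemma~\ref{limit set lemma}(2)(ii) gives $\Lambda_{G_1}(H) \subsetneq \partial G_1$ -- contradiction. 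The index statement $[K:K_1]=\infty$ then follows since any finite-index subgroup shares the full limit set of its overgroup.

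The main technical step will be (3). Assume for contradiction that $K_1$ is qi embedded in $G$. The first subtask is to upgrade this to $K_1$ being qi embedded in $H$: using compatible generating sets along $K_1 \subset H \subset G$, the Lipschitz bound $d_H \leq d_{K_1}$ on $K_1$ combined with the qi embedding of $K_1$ in $G$ and the Lipschitz bound $d_G \leq L\, d_H$ on $H$ forces $d_{K_1}$ and $d_H$ to be comparable on $K_1$. Hence $K_1$ is quasiconvex in $H$. By Lemma~\ref{lem: prop emb}(1), $H \hookrightarrow G$ is uniformly metrically proper, so the containment $\alpha \subset N_R(K_1)$ in $\Gamma_G$ from (1) upgrades to $\alpha \subset N_{R'}(K_1)$ in $\Gamma_H$, and quasiconvexity forces $\alpha(\pm\infty) \in \Lambda_H(K_1)$. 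Now Lemma~\ref{ct elem lemma}(2) makes $\partial K_1 \hookrightarrow \partial H$ an injection with image $\Lambda_H(K_1)$, and composition (Lemma~\ref{ct elem lemma}(1)) shows that the CT map $\partial H \to \partial G$ restricted to $\Lambda_H(K_1)$ agrees with the CT map $\partial K_1 \to \partial G$, which is injective because $K_1$ is qi embedded in $G$. But $\alpha$ being a leaf of $\LL_G(H)$ means its endpoints are identified in $\partial G$ -- contradiction.

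Conclusion (4) will follow immediately from (3). Assuming $K_1$ is qi embedded in $K$, Lemma~\ref{ct elem lemma}(2) gives an injection $\partial K_1 \hookrightarrow \partial K$. By (3), $K_1$ is not qi embedded in $G$, so by Lemma~\ref{ct elem lemma}(3) the CT map $\partial K_1 \to \partial G$ fails to be injective; pick distinct $\xi,\xi' \in \partial K_1$ with the same image. Their (still distinct) images in $\partial K$ are identified under $\partial K \to \partial G$, so the geodesic line in $\Gamma_K$ joining them is a leaf of $\LL_G(K)$ which, by quasiconvexity of $K_1$ in $K$, is carried by $K_1$. The main obstacle throughout is the careful bookkeeping of the various CT maps along $K_1 \subset H \subset G_1 \subset G$ and $K_1 \subset K \subset G_1 \subset G$, making sure they compose consistently and factor appropriately through the relevant limit sets; once this is set up, every step of the argument reduces to a direct invocation of one of the lemmas in the excerpt.
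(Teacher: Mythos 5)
Your proposal is correct and follows essentially the same route as the paper: Proposition \ref{general thm} applied inside $G_1$ with $A=V(\alpha)$ for (1), the identical limit-set chase for (2), the observation that a qi embedding of $K_1$ in $G$ would make $\alpha$ a leaf of $\LL_G(H)$ carried by $K_1$ and hence force $\LL_G(K_1)\neq\emptyset$ for (3), and a leaf of $\LL_G(K)$ carried by $K_1$ for (4). The only (harmless) deviation is in (4), where you extract the leaf from non-injectivity of the CT map $\partial K_1\to\partial G$ via Lemma \ref{ct elem lemma}(3), whereas the paper pushes $\alpha$ into $K_1$ to produce an explicit quasigeodesic $\beta\subset K_1$ at finite Hausdorff distance from $\alpha$.
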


\begin{proof}
 We first note that since the inclusion $G_1\map G$ admits a CT map and $H$ is qi embedded in $G_1$, the inclusion
$H\map G$ admits a CT map. Hence, by Lemma \ref{ct elem lemma}, $\LL_G(H)$ is non-empty. We now apply Proposition \ref{general thm}.
Since $G_1$ is hyperbolic it is finitely presented. See for instance \cite[Corollary 2.2A]{gromov-hypgps}.
Using Proposition \ref{general thm} with $G_1$ in place of $G$ and $\alpha$ in place of $A$ we have a finitely generated
subgroup $K_1$ of $H\cap K$ such that conclusion (1) of the Corollary holds.

To prove (2), suppose $\Lambda_K(K_1)= \partial K$.  Since the inclusion $i: K\map G_1$ admits a CT map
$\partial i: \partial K\map \partial G_1$, it follows that $\Lambda_{G_1}(K_1)=\partial i(\Lambda_K( K_1))=\partial i(\partial K)$.
However, by Lemma \ref{ct elem lemma}(4) $\Lambda_{G_1}(K)=\partial i(\partial K)=\partial G_1$. Hence,
$\Lambda_{G_1}(K_1)=\partial G_1$. On the other hand  $\Lambda_{G_1}(K_1)\subset \Lambda_{G_1}(H)$. Hence,
$\Lambda_{G_1}(H)=\partial G_1$,  contradicting hypothesis (iii).
Hence,
$\Lambda_K(K_1) \neq \partial K$, forcing
 $[K:K_1]=\infty$ and proving (2).

To prove (3), we again argue by contradiction.
Suppose $K_1$ is qi embedded in $G$. Then $\LL_G(K_1)$
is empty by Lemma \ref{ct elem lemma}. Further,  $K_1$ is qi embedded in $H$ as well. Thus $\alpha$ is a leaf of
the CT lamination $\LL_G(H)$ supported by $K_1$, forcing 
$\LL_G(K_1)$ to be non-empty. This   contradiction  proves (3).

The inclusions $K\map G_1$ and $G_1\map G$ admit CT maps. Hence, the inclusion $K\map G$ admits a CT map by Lemma \ref{ct elem lemma}(1).
Now suppose $K_1$ is qi embedded in $K$. Since $\alpha$ is contained in a finite neighborhood of $K_1$,
there is a (bi-infinite) 
quasigeodesic line $\beta$ of $K$ contained in  $K_1$ with $Hd(\alpha, \beta)<\infty$. To see this, suppose $\alpha\subset N_D(K_1)$
where the neighborhood is taken in a Cayley graph of $G$. Then for all $t\in [0,\infty)$ one may choose
$\beta(t)$ to be a point of $K_1$ such that $d(\alpha(t), \beta(t))\leq D+1$. Thus, $\beta$ is a 
quasigeodesic in (a Cayley graph of) $K$  by Lemma \ref{lem: prop emb}. Therefore, $\beta$ is  a leaf of the CT lamination $\LL_G(K)$ 
carried by $K_1$. This finishes the proof of (4). 
\end{proof}

We now recall some existence theorems for CT maps:

\begin{theorem}{\em ( \cite[Theorem 3.10, Corollary 3.11]{mitra-trees})}
Suppose $\pi:\XX\map \TT$ is a tree of uniformly hyperbolic metric spaces with qi embedded condition.
If the total space $\XX$ is hyperbolic then for all $v\in V(\TT)$ the inclusion map $\XX_v\map \XX$
admits a CT map.

In particular given a finite graph of hyperbolic groups $(\mathcal G, \mathcal Y)$ with qi embedded condition, 
if $G=\pi_1(\mathcal G, \mathcal Y)$ is hyperbolic then for all $v\in V(\YY)$, the inclusion map $G_v\map G$ 
admits a CT map.  
\end{theorem}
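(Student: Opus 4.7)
The plan is to invoke the hyperbolic ladder construction due to Mitra \cite{mitra-trees}. The ``in particular'' statement reduces to the first by discretizing the Bass--Serre tree of spaces associated to the graph of groups $(\mathcal G, \mathcal Y)$ into a tree of uniformly quasi-isometric metric graphs, then applying Lemma \ref{ct elem lemma}(1),(2) together with the Milnor-{\v S}varc lemma to transfer existence of CT maps from $\XX_v \hookrightarrow \XX$ to $\Gamma_{G_v} \hookrightarrow \Gamma_G$. So it suffices to prove the first statement. Fix $v \in V(\TT)$ and a basepoint $x_0 \in \XX_v$. By the standard reformulation, a CT map for $\XX_v \hookrightarrow \XX$ exists iff for every $M \geq 0$ there is $N = N(M) \geq 0$ such that every $d_v$-geodesic $\gamma$ with $d_v(x_0, \gamma) \geq N$ has the property that any $\XX$-geodesic between its endpoints stays outside $B_\XX(x_0, M)$.

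The core construction is, for each such $\gamma$, a \emph{hyperbolic ladder} $L_\gamma \subset \XX$ containing $\gamma$ together with a uniformly coarsely Lipschitz retraction $\rho: \XX \to L_\gamma$. The ladder is built by induction on tree distance from $v$: set $\gamma_v := \gamma$; for each edge $e = [u,u']$ of $\TT$ with $u$ already equipped with a slice $\gamma_u \subset \XX_u$, use the uniform qi embedding $\XX_e \hookrightarrow \XX_u$ and uniform hyperbolicity of fibers to conclude that the image of $\XX_e$ is uniformly quasiconvex in $\XX_u$; project $\gamma_u$ onto this image by nearest points, and transfer across $e$ via the qi embedding $\XX_e \hookrightarrow \XX_{u'}$ to produce the next slice $\gamma_{u'} \subset \XX_{u'}$. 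Take $L_\gamma$ to be the union of all slices $\gamma_u$ together with the edge strips lying over tree edges, and define $\rho$ slicewise by nearest-point projection onto $\gamma_u$ inside $\XX_u$. The technical heart of the argument, which is the main obstacle, is to verify that $\rho$ is uniformly coarsely Lipschitz across edges of $\XX$: for points in a common fiber this follows from hyperbolicity plus uniform quasiconvexity of $\gamma_u$; for points straddling a tree edge one must compare the two slices $\gamma_u, \gamma_{u'}$ through the common edge space $\XX_e$, a compatibility which follows directly from the inductive definition together with coarse Lipschitzness of projection onto the uniformly quasiconvex image of $\XX_e$. Since $\XX$ is hyperbolic, the existence of $\rho$ immediately yields that $L_\gamma$ is uniformly quasiconvex in $\XX$, with constants independent of $\gamma$.

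To extract the CT property, let $\lambda$ be an $\XX$-geodesic between the endpoints of $\gamma$ passing within $M$ of $x_0$. Quasiconvexity forces $\lambda$ inside a uniform $D$-neighborhood of $L_\gamma$, so some slice $\gamma_u$ meets $B_\XX(x_0, M+D)$. We then propagate this closeness back along the $\TT$-geodesic from $u$ to $v$: at each edge $e$ we pass through $\XX_e$ using the uniform qi embeddings into the adjacent vertex spaces, converting $\XX$-closeness into closeness inside fibers by the uniform properness function $f$ from the definition of a tree of hyperbolic metric spaces, and controlling the extra error introduced by one ladder step via the standard bound on nearest-point projections onto quasiconvex sets in hyperbolic spaces. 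The cumulative estimate after at most $d_\TT(u,v)$ steps produces a uniform bound $d_v(x_0, \gamma) \leq N(M)$, which is exactly the contrapositive of the desired criterion. Thus the CT map exists; once uniform coarse Lipschitzness of $\rho$ across tree edges is in hand, everything else is routine hyperbolic-geometric propagation along $\TT$.
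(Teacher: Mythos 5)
This theorem is quoted from \cite{mitra-trees} and the paper supplies no proof of its own, so the relevant comparison is with Mitra's original argument; your sketch --- the reduction of the graph-of-groups case to the tree of spaces, the Mitra criterion for existence of CT maps, the inductively constructed hyperbolic ladder $L_\gamma$ with its uniformly coarsely Lipschitz retraction $\rho$, quasiconvexity of the ladder from hyperbolicity of $\XX$, and the final estimate forcing $d_v(x_0,\gamma)\leq N(M)$ --- is precisely that argument, with the genuinely technical step (uniform coarse Lipschitzness of $\rho$ across tree edges) correctly identified as the crux. The one point to make explicit in your last paragraph is why the cumulative error over $d_\TT(u,v)$ propagation steps is uniform: either note that $\pi$ is $1$-Lipschitz so $d_\TT(u,v)\leq M+D$, or, more efficiently, apply $\rho$ to $x_0$ itself (which lands on $\gamma\subset \XX_v$) and invoke uniform properness of $\XX_v\hookrightarrow\XX$ once.
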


\iffalse
\begin{rem}\label{rem: tree}
The second part of the theorem can alternately
 be deduced by applying the first part to the naturally associated tree
of spaces as in the construction in \cite{ps-limset} proving Proposition \ref{prop limset}.
\end{rem}

However, the main tool for the proof of the first part of the 
theorem above was the construction of some special subsets of $\XX$ called {\em ladders} which were then 
adapted to the following situation:

\fi
\begin{theorem}{\em(\cite{mitra-ct})}\label{kernel ct}
Suppose we have a short exact sequence of hyperbolic groups 
$$1\rightarrow K\stackrel{i}{\rightarrow} G\stackrel{\pi}{\rightarrow} H\rightarrow 1.$$
Then a CT map exists for the inclusion $K\map G$.
\end{theorem}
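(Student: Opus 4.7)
The plan is to use the ladder construction of \cite{mitra-ct}. By Example \ref{ex: exact sequence}, fix compatible finite generating sets so as to realize the exact sequence as a metric graph bundle $\pi:\Gamma_G\to\Gamma_H$ in which each fiber is a coset of $K$ that is uniformly quasi-isometric to the Cayley graph $\Gamma_K$. To construct the CT map $\partial K\to\partial G$, it suffices to verify the following standard criterion: for every $M>0$ there exists $N=N(M)>0$ such that whenever $\lambda$ is a $K$-geodesic segment with $d_K(1,\lambda)>N$, every $G$-geodesic $\gamma$ joining the endpoints of $\lambda$ satisfies $d_G(1,\gamma)>M$.

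For such a $\lambda$, I build a \emph{hyperbolic ladder} $B_\lambda\subset\Gamma_G$ as follows. For each vertex $x\in\lambda$, choose a $k$-qi section $s_x$ of $\pi$ over a biinfinite geodesic of $\Gamma_H$ passing through $1$, normalized so that $s_x(1)=x$; uniformity of $k=k(G,K,H)$ follows because the hyperbolicity of $G$, combined with the bundle structure, forces qi sections over geodesics of $\Gamma_H$. Let $B_\lambda$ be the connected subgraph of $\Gamma_G$ spanned by $\bigcup_{x\in\lambda}s_x(\Gamma_H)$, together with the natural horizontal edges of $\Gamma_G$ between adjacent slices. Then $\lambda\subset B_\lambda$ and $\pi(B_\lambda)=\Gamma_H$.

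The core of the argument is to show that $B_\lambda$ is uniformly $C$-quasiconvex in $\Gamma_G$ with $C$ independent of $\lambda$. This is done by constructing a coarsely Lipschitz retraction $r_\lambda:\Gamma_G\to B_\lambda$, defined fiberwise: for $g$ in the fiber $F_b=\pi^{-1}(b)$, let $r_\lambda(g)$ be a nearest-point projection in $F_b$ onto the slice $B_\lambda\cap F_b$, which is a uniform qi image of $\lambda$. Showing that $r_\lambda$ is coarsely Lipschitz on $\Gamma_G$ is the heart of the proof and rests on a flaring property implicit in the hyperbolicity of the total space: along any $\Gamma_H$-geodesic, fiber distances between parallel transports of distinct $K$-elements grow exponentially in the base, and this divergence is precisely what prevents distant points of $\Gamma_G$ from having fiberwise projections that are far apart along $\lambda$.

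Given uniform quasiconvexity of $B_\lambda$, any $G$-geodesic $\gamma$ with endpoints on $\lambda$ lies within a bounded $G$-neighborhood of $B_\lambda$, and a direct estimate bounds $d_G(1,B_\lambda)$ from below: any $p=s_x(b)\in B_\lambda$ satisfies both $d_G(1,p)\geq d_H(1,b)$ (from $\pi$ being $1$-Lipschitz) and $d_G(1,p)\geq d_G(1,x)-k\cdot d_H(1,b)-k$ (from the qi-section bound on $d_G(p,x)$), while $d_G(1,x)\to\infty$ as $d_K(1,\lambda)\to\infty$ by uniform metric properness of the fiber inclusion (Lemma \ref{lem: prop emb}). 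Choosing $N$ large enough thus yields the desired $d_G(1,\gamma)>M$. The main obstacle is ladder quasiconvexity, where the hyperbolicity of all three of $K$, $H$, and $G$ must be combined nontrivially through the flaring phenomenon.
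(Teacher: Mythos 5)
The paper does not prove this statement; it is quoted from \cite{mitra-ct}, so your proposal has to be measured against Mitra's argument there. At the level of architecture you have reconstructed it correctly: build a ladder $B_\lambda$ over a fiber geodesic $\lambda$, produce a fiberwise nearest-point-projection retraction $r_\lambda:\Gamma_G\to B_\lambda$, deduce uniform quasiconvexity of $B_\lambda$, and combine this with a ``$\lambda$ far from $1$ in $\Gamma_K$ implies $B_\lambda$ far from $1$ in $\Gamma_G$'' estimate to verify Mitra's criterion for the existence of a CT map. Your final properness estimate (playing $d_G(1,p)\geq d_H(1,b)$ against $d_G(1,p)\geq d_G(1,x)-k\,d_H(1,b)-k$ and invoking metric properness of the fiber) is exactly the right closing move.

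The genuine problem is in what you yourself call the heart of the proof. You assert that the coarse Lipschitzness of $r_\lambda$ ``rests on a flaring property implicit in the hyperbolicity of the total space,'' with exponential divergence of parallel transports preventing distant fiberwise projections. That is not the mechanism, and no argument is supplied for how flaring would yield the Lipschitz bound. In Mitra's proof the coarse Lipschitzness of the ladder retraction uses only the hyperbolicity of the fiber $K$: the key lemma is that in a $\delta$-hyperbolic space, nearest-point projection onto a geodesic coarsely commutes with uniform quasi-isometries, applied to the parallel-transport quasi-isometries $F_b\to F_{b'}$ between adjacent fibers (induced by conjugation by generators). Consequently the retraction is coarsely Lipschitz even when $G$ is not hyperbolic; the hyperbolicity of $G$ is used only afterwards, to conclude that a coarsely Lipschitz retract of a hyperbolic space is quasiconvex. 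As written, your central step is an appeal to a property that is true for hyperbolic extensions (by Mosher) but is neither proved by you nor the actual engine of the argument, so the proof is incomplete at precisely the point you identify as essential. A secondary, fixable issue: your ladder is described both as a union of sections over a single bi-infinite geodesic through $1\in\Gamma_H$ and as satisfying $\pi(B_\lambda)=\Gamma_H$; Mitra's ladder places a transported copy of $\lambda$ (or the fiber geodesic joining the transported endpoints) in \emph{every} fiber $F_q$, $q\in H$, and the Lipschitz estimate is checked fiber-by-adjacent-fiber over all of $\Gamma_H$.
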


\begin{theorem}{\em (\cite[Theorem 5.3]{mahan-sardar})}\label{ct for fiber of bundles}
Suppose $\pi:\XX\map \BB$ is a metric graph bundle where $\XX$,  and all the fibers are uniformly
hyperbolic. Also we assume that the fibers are non-elementary (i.e.\ the barycenter maps are uniformly coarsely surjective).
Then for any fiber $\FF_b, b\in V(\BB)$, the inclusion $\FF_b\map \XX$ admits a CT map.
\end{theorem}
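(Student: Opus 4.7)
The plan is to apply Mitra's standard criterion for the existence of a Cannon--Thurston map: given a basepoint $p \in \FF_b$, it suffices to show that for every $M>0$ there exists $N>0$ such that any geodesic segment $\gamma$ in $\FF_b$ lying outside the $d_b$-ball $B_{\FF_b}(p,N)$ satisfies $\gamma \cap B_\XX(p,M) = \emptyset$. Equivalently, if $x_n \in \FF_b$ is a sequence with $d_b(p,x_n) \to \infty$, we need $d_\XX(p,x_n) \to \infty$ along \emph{geodesics of $\FF_b$}, not merely pointwise (which follows trivially from uniform metric properness). So the real content is to control entire geodesics of the fiber from the ambient space.

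The mechanism for this is a \emph{ladder construction}. Given a geodesic $\gamma \subset \FF_b$, I would build a subset $L_\gamma \subset \XX$ as follows: for each vertex $x \in \gamma$, choose a uniform qi section $s_x$ through $x$ defined over a suitable subgraph of $\BB$, and let $L_\gamma$ be the union of the images of these sections as $x$ ranges over $\gamma$ (iteratively extended fiber-by-fiber over $\BB$, using Definition \ref{defn-mgbdl}(2) to pass between adjacent fibers). By construction $L_\gamma$ contains $\gamma$ and surjects onto $\BB$ under $\pi$. The central technical step is to establish that $L_\gamma$ is \emph{uniformly quasiconvex in $\XX$}, with constants depending only on the bundle parameters (the function $f$, the hyperbolicity constants of $\XX$ and of the fibers, and a non-elementarity constant for the fibers).

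To prove ladder quasiconvexity, I would construct a coarsely Lipschitz retraction $\Pi_\gamma : \XX \to L_\gamma$: on each fiber $\FF_v$, $\Pi_\gamma$ sends a point $y$ to a nearest point of $L_\gamma \cap \FF_v$ (which is a quasigeodesic of $\FF_v$, uniformly quasiconvex since fibers are hyperbolic). The key estimate is that $\Pi_\gamma$ does not increase ambient distances by more than a multiplicative and additive constant; this propagates in two steps — within a single fiber via hyperbolicity of $\FF_v$ and nearest-point projection, and across adjacent fibers via condition (2) of Definition \ref{defn-mgbdl} together with uniform qi sections. Existence of such a coarsely Lipschitz retraction immediately yields quasiconvexity of $L_\gamma$ in $\XX$ (since $\XX$ is hyperbolic).

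Once quasiconvexity of $L_\gamma$ is in hand, Mitra's criterion follows quickly. Suppose $\gamma$ is a geodesic of $\FF_b$ disjoint from $B_{\FF_b}(p, N)$ for large $N$. Since the inclusion $\FF_b \hookrightarrow \XX$ is uniformly metrically proper (Definition \ref{defn-mgbdl}(1)), $\gamma$ is disjoint from $B_\XX(p, M')$ for $M' = M'(N)$ with $M'(N) \to \infty$ as $N \to \infty$. The retraction $\Pi_\gamma$ fixes $\gamma$ and is coarsely Lipschitz, so any point of $L_\gamma$ within distance $M$ of $p$ in $\XX$ would project to a point of $\gamma$ within a bounded distance (depending on $M$) of $\Pi_\gamma(p)$; by quasiconvexity of $L_\gamma$ and coarse Lipschitzness of $\Pi_\gamma$, we control $d_\XX(p,\gamma)$ from below by a function of $N$ that tends to infinity. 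The hard part — and the one whose details I would grind through most carefully — is the ladder-quasiconvexity/retraction step, and in particular verifying the uniform bound on the variation of nearest-point projections across adjacent fibers; this is where the non-elementarity hypothesis on the fibers and the $\delta$-hyperbolicity of $\XX$ jointly enter, typically via a flaring-type argument (as in the referenced \cite{mahan-sardar}) ruling out fibers that collapse along $L_\gamma$.
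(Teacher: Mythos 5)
Your overall architecture---Mitra's criterion, the ladder $L_\gamma$, a fiberwise nearest-point retraction $\Pi_\gamma:\XX\map L_\gamma$, and uniform quasiconvexity of the ladder---is exactly the strategy of the cited proof in \cite{mahan-sardar} (the present paper only quotes the result, so the comparison is with that source). However, there is a genuine gap, and it appears at both ends of your write-up. Mitra's criterion is misstated: what must be shown is that for every $M$ there is $N$ such that if a geodesic segment $\gamma$ of $\FF_b$ lies outside $B_{\FF_b}(p,N)$, then any geodesic \emph{of $\XX$} joining the endpoints of $\gamma$ misses $B_\XX(p,M)$. The condition you actually wrote, $\gamma\cap B_\XX(p,M)=\emptyset$, concerns $\gamma$ itself; as you yourself observe, it follows at once from Definition \ref{defn-mgbdl}(1), and it is far from sufficient for a CT map (it holds for any uniformly properly embedded subspace, and such inclusions need not admit CT maps). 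The entire purpose of the ladder is to control the ambient geodesic between the endpoints of $\gamma$, which a priori strays arbitrarily far from $\FF_b$; that geodesic never appears in your argument.

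Correspondingly, your closing paragraph does not close. The assertion that a point of $L_\gamma$ near $p$ ``would project to a point of $\gamma$'' under $\Pi_\gamma$ is vacuous, since $\Pi_\gamma$ fixes $L_\gamma$ pointwise and so provides no map from the ladder to $\gamma$. The missing step is the lemma: if $\gamma$ lies outside $B_{\FF_b}(p,N)$ then $L_\gamma$ lies outside $B_\XX(p,\tau(N))$ with $\tau(N)\to\infty$ as $N\to\infty$. This is proved using the vertical structure of the ladder: a point $z\in L_\gamma\cap \FF_v$ is joined to some $z_0\in\gamma$ by a uniform qi section over the geodesic $[b,v]$ in $\BB$; if $d_\BB(b,v)\geq k$ then $d_\XX(z,p)\geq k$ because $\pi$ is $1$-Lipschitz, while if $d_\BB(b,v)<k$ then $d_\XX(z,z_0)$ is bounded in terms of $k$ and $d_\XX(z_0,p)$ is large by uniform properness of $\FF_b\map\XX$; optimizing the choice of $k=k(N)$ yields $\tau$. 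Only after this does quasiconvexity of $L_\gamma$ finish the proof, since the $\XX$-geodesic joining the endpoints of $\gamma$ lies in a uniform neighborhood of $L_\gamma$. A smaller correction: non-elementarity of the fibers enters in order to produce uniform qi sections through \emph{every} point of $\XX$ (via coarse surjectivity of the barycenter maps of Lemma \ref{lem-bary}), which is what makes the ladder and the retraction well defined; it is not primarily a flaring input at the quasiconvexity stage.
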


We note that  hyperbolicity of $\BB$ is not an assumption for Theorem \ref{ct for fiber of bundles}
since it follows from the  hypotheses (\cite{mosher-hypextns} and \cite[Proposition 2.10]{mahan-sardar}).
We have the following corollary. 
\begin{cor}\label{cor: fiber ct}
Suppose $\GB(\YY)$ is a developable complex of non-elementary hyperbolic groups with qi condition over
a finite connected simplicial complex $\YY$ and suppose $G=\pi_1(\GB(\YY))$
is also hyperbolic. Then for all $y\in V(\YY)$, $g\in G$ the inclusion $gG_yg^{-1}\map G$ admits a CT map.
\end{cor}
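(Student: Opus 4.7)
The plan is to combine the geometric realization of a complex of groups as a metric graph bundle (Proposition \ref{prop1: cplx gps}) with the fiber CT map theorem (Theorem \ref{ct for fiber of bundles}) and then transfer the result from the space level to the group level via Corollary \ref{cor: ct lemma}.

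First, I would apply Proposition \ref{prop1: cplx gps} to produce a metric graph bundle $\pi:\XX\map \BB$ on which $G$ acts by isometries, with the action on $\XX$ being proper and cocompact. Since $G$ is assumed hyperbolic and acts properly cocompactly on $\XX$, the Milnor--{\v S}varc lemma forces $\XX$ to be quasi-isometric to (a Cayley graph of) $G$, so $\XX$ is hyperbolic. By part (3) of that proposition, for the given $y\in V(\YY)$ there is some $v\in V(\BB)$ with $G_v$ conjugate to $G_y$ in $G$; by part (4), the $G_v$-action on $\FF_v=\pi^{-1}(v)$ is proper and cocompact. Since $G_v$ is non-elementary hyperbolic (as is every face group, by hypothesis) and since $\YY$ is finite, the fibers are uniformly hyperbolic and uniformly non-elementary.

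Next, Theorem \ref{ct for fiber of bundles} applies: the inclusion $\FF_v\map \XX$ admits a CT map. Note that $\FF_v$ is a connected subgraph of $\XX$ carrying the induced path metric, so the intrinsic metric of $\FF_v$ coincides with the induced length metric from $\XX$, and $\FF_v$ is a hyperbolic geodesic metric space in this metric.

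At this point I would invoke Corollary \ref{cor: ct lemma} with the triple $(G,X,Y,H)=(G,\XX,\FF_v,G_v)$: hypothesis (1) is the hyperbolicity of $\FF_v$ in the induced length metric just noted, (2) is the CT map from Theorem \ref{ct for fiber of bundles}, and (3) is the proper cocompact $G_v$-action supplied by Proposition \ref{prop1: cplx gps}(4). The corollary then yields that $G_v$ is hyperbolic and the inclusion $G_v\hookrightarrow G$ admits a CT map. Finally, since $gG_yg^{-1}$ is conjugate in $G$ to $G_v$ and left multiplication by an element of $G$ is an isometry of the Cayley graph of $G$, pre- and post-composing the CT map for $G_v\hookrightarrow G$ with these isometries (which induce homeomorphisms on the boundaries, by Lemma \ref{ct elem lemma}(1),(2)) produces a CT map for $gG_yg^{-1}\hookrightarrow G$.

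The only delicate point is the matching of metrics: one must confirm that the fiber metric $d_v$ used in Theorem \ref{ct for fiber of bundles} and the induced length metric from $\XX$ required by Corollary \ref{cor: ct lemma} coincide. This is immediate from the metric graph bundle definition, since $\FF_v$ is by definition a connected subgraph of $\XX$ equipped with the induced path metric. Everything else is a routine application of the cited results.
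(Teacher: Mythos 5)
Your proposal is correct and follows essentially the same route as the paper: realize the complex of groups as a metric graph bundle via Proposition \ref{prop1: cplx gps}, apply Theorem \ref{ct for fiber of bundles} to the fibers, and conclude with Corollary \ref{cor: ct lemma}. The only cosmetic difference is that the paper handles all conjugates $gG_yg^{-1}$ at once by noting that the vertex stabilizers over $p^{-1}(y)$ exhaust the conjugates of $G_y$, whereas you fix one vertex and transfer by the isometry of left multiplication; both are fine.
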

\begin{proof} 
By Proposition \ref{prop1: cplx gps} there is a metric graph bundle $\pi:\XX\map \BB$ along with simplicial actions
of $G$ on $\XX$ and $\BB$ such that $\pi$ is $G$-equivariant.
Further, all the fibers are uniformly hyperbolic
and $\XX$ is hyperbolic. Also, there is an isomorphism of graphs $p:\BB/G\map \YY^{(1)}$ such that
for all $y\in \YY^{(0)}$, $\{G_v: v\in p^{-1}(y)\}$, is the set of all conjugates of $G_y$. Moreover, each subgroup
$G_v$ acts properly and cocompactly on $\FF_v$. By Theorem \ref{ct for fiber of bundles} inclusion
of each fiber in $\XX$ admits a CT map. Therefore, we are done by Corollary \ref{cor: ct lemma}.
\end{proof}

The next two theorems \cite{kap-ps,mbdl2} extend the above theorems. For trees of metric
spaces, we have  the following.

\begin{theorem}{\em (CT maps for subtrees of spaces \cite[Chapter 8, Section 8.6]{kap-ps})}\label{ct for subtree}
Suppose $\pi: \XX\map \TT$ is a tree of hyperbolic metric spaces with qi embedded condition and such that
the total space $\XX$ is hyperbolic. Suppose $\TT_1\subset T$ is a subtree and $\XX_1=\pi^{-1}(\TT_1)$.
Then $\XX_1$ is hyperbolic and the inclusion $\XX_1\map \XX$ admits a
CT map.
\end{theorem}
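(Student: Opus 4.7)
The plan is to prove the two assertions in sequence: first, that $\XX_1$ is hyperbolic, and then that the inclusion $\XX_1\hookrightarrow\XX$ admits a Cannon--Thurston map. For the first step, I would observe that the restricted projection $\pi|_{\XX_1}:\XX_1\to\TT_1$ is again a tree of hyperbolic metric spaces with qi embedded condition, using precisely the vertex spaces $\XX_v$ with $v\in V(\TT_1)$, the edge spaces with $e\in E(\TT_1)$, and the very same attaching maps $\phi_e$, all with identical constants $\delta$, $K$, and $f$. By the Bestvina--Feighn combination theorem, hyperbolicity of the total space of such a tree of spaces is equivalent to the hallway flare condition. Every hallway in $\XX_1$ is tautologically a hallway in $\XX$ (sitting over a geodesic in $\TT_1\subset\TT$ with the same qi sections), so flaring in $\XX$, guaranteed by the hyperbolicity hypothesis, descends immediately to $\XX_1$. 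Hence $\XX_1$ is hyperbolic.

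For the second step, I would verify the standard Mitra-type criterion for existence of a CT map: for every $M>0$ there exists $N>0$ such that any geodesic segment $\lambda$ in $\XX_1$ with $d_{\XX_1}(y_0,\lambda)\geq N$ satisfies $d_{\XX}(y_0,\lambda)\geq M$, where $y_0\in\XX_1$ is a fixed basepoint. The tool is Mitra's ladder construction from \cite{mitra-trees}. After reducing (by subdivision along uniform qi sections) to the case where $\lambda$ lies in a single fiber $\XX_v$ with $v\in V(\TT_1)$, the ladder $L_\lambda\subset\XX_1$ is built inductively over $V(\TT_1)$: for adjacent $w$, take $L_\lambda\cap\XX_w$ to be a nearest-point projection in $\XX_w$ of the $\phi_e$-image of $L_\lambda\cap\XX_v$ onto a $\XX_w$-quasigeodesic, and iterate. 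Two key properties are needed. First, $L_\lambda$ is uniformly quasiconvex in $\XX_1$, realized by a coarsely Lipschitz retraction $\rho^1_\lambda:\XX_1\to L_\lambda$; this is Mitra's theorem applied to the hyperbolic tree of spaces $\pi|_{\XX_1}:\XX_1\to\TT_1$ from Step~1. Second, the same set $L_\lambda$ is uniformly quasiconvex in $\XX$, via a coarsely Lipschitz retraction $\rho_\lambda:\XX\to L_\lambda$ obtained by extending the ladder construction across $V(\TT)\setminus V(\TT_1)$ (project each such fiber $\XX_w$ to the fiber $\XX_{w_1}$ over the closest vertex $w_1\in V(\TT_1)$ via the qi sections along $[w,w_1]$, then apply $\rho^1_\lambda$).

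Granting both properties, the criterion is verified as follows. The two retractions agree on $\XX_1$, since on fibers $\XX_u$ with $u\in V(\TT_1)$ both are defined as nearest-point projection to $L_\lambda\cap\XX_u$. If $d_{\XX_1}(y_0,\lambda)\geq N$, then $\rho^1_\lambda(y_0)=\rho_\lambda(y_0)$ lies at $\XX_1$-distance at least $N-C$ from $y_0$. Since $\rho_\lambda$ is coarsely Lipschitz in the ambient metric, this together with the uniform metric properness of the fiber inclusions $\XX_u\hookrightarrow\XX$ (part of the tree-of-spaces data) and the hallway flare condition forces $d_\XX(y_0,L_\lambda)\to\infty$ as $N\to\infty$, and hence $d_\XX(y_0,\lambda)\to\infty$ too. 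The \textbf{main obstacle} is establishing the second property with constants uniform in $\lambda$ and independent of the choice of subtree $\TT_1$: one must control the retraction $\rho_\lambda$ in the $\XX$-metric, where slices of $L_\lambda$ can grow in size, and dominate this growth using the exponential flare of $\XX$-hallways. Handling this uniformly, and then converting the $\XX_1$-distance lower bound into an $\XX$-distance lower bound to close Mitra's criterion, is the technically demanding content of the proof.
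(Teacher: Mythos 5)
First, a point of comparison: the paper does not prove this statement at all --- it is imported from Kapovich--Sardar \cite[Chapter 8, Section 8.6]{kap-ps}, so there is no internal proof to measure you against, only the standard Mitra-style strategy that that reference follows and that you are clearly reconstructing. Your Step 1 is correct and essentially complete: the restriction $\pi|_{\XX_1}:\XX_1\map\TT_1$ inherits all the data with the same constants $\delta, K, f$, every hallway of $\XX_1$ is a hallway of $\XX$, and flaring descends. The only caveat is that you are silently using both directions of Bestvina--Feighn: the combination theorem (flaring implies hyperbolicity) and its converse (hyperbolicity of $\XX$ implies the flare condition), and the converse is a separate theorem that needs to be cited, not part of the combination theorem itself.

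Step 2 contains two genuine gaps. (a) You misstate Mitra's criterion: the required conclusion is that any geodesic of $\XX$ \emph{joining the endpoints of} $\lambda$ lies outside the $M$-ball about $y_0$, not that $\lambda$ itself does. The latter is strictly weaker (when $\XX_1$ is distorted, the connecting $\XX$-geodesic can dive far from $i(\lambda)$) and does not yield continuity of the boundary map. Your subsequent argument --- the $\XX$-geodesic fellow-travels $L_\lambda$ because of the Lipschitz retraction $\rho_\lambda$, and every point of $L_\lambda$ is forced far from $y_0$ --- is aimed at the correct statement, so this is repairable, but as written the criterion you verify is the wrong one. (b) The reduction ``by subdivision along uniform qi sections to the case where $\lambda$ lies in a single fiber'' is not a routine preprocessing step. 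A geodesic of $\XX_1$ is in general not close to any single fiber, and decomposing it requires an a priori description of $\XX_1$-geodesics, up to uniform Hausdorff distance, in terms of ladders (equivalently, that ladders over fiber geodesics are uniformly quasiconvex in $\XX_1$ and that $\XX_1$-geodesics track suitable unions of them). That description is itself one of the main theorems of \cite{kap-ps} and of the bundle analogue in \cite{mbdl2}; it cannot be waved through in a clause. Together with the uniformity issue you do flag (quasiconvexity of $L_\lambda$ in $\XX$ with constants independent of $\lambda$ and of $\TT_1$), these constitute the real content of the proof, so what you have is an accurate road map rather than a proof.
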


One then immediately obtains: 
% (see Remark \ref{rem: tree}):
\begin{cor}{\em (CT maps for subgraph of groups, \cite[Chapter 8, Section 8.11]{kap-ps})}\label{ct for subgraph}
Given a finite graph of hyperbolic groups $(\mathcal G, \mathcal Y)$ with qi embedded condition, 
and a connected subgraph $\mathcal Y_1\subset \YY$, if $G=\pi_1(\mathcal G, \mathcal Y)$ is hyperbolic
then so is $G_1=\pi_1(\mathcal G, \mathcal Y_1)$ and the inclusion map $G_1\map G$ admits a CT map.
\end{cor}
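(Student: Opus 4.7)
The plan is to reduce the corollary to Theorem \ref{ct for subtree} via the standard Bass--Serre construction of a tree of metric spaces from a graph of groups.

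First I would pass from $(\mathcal G,\mathcal Y)$ to a tree of spaces $\pi:\XX\to\TT$ in which $\TT$ is the Bass--Serre tree of $(\mathcal G,\mathcal Y)$ and each vertex (resp.\ edge) space is (qi to) a Cayley graph of the corresponding conjugate of a vertex (resp.\ edge) group. The qi embedded condition on $(\mathcal G,\mathcal Y)$ guarantees that each edge-space-to-vertex-space inclusion is a uniform qi embedding, so $\pi:\XX\to\TT$ is a tree of hyperbolic metric spaces with qi embedded condition. By construction, $G=\pi_1(\mathcal G,\mathcal Y)$ acts properly and cocompactly on $\XX$, so by the Milnor--\v{S}varc lemma $\XX$ is quasi-isometric to a Cayley graph of $G$; in particular $\XX$ is hyperbolic.

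Next I would identify a subtree $\TT_1\subset\TT$ and its preimage $\XX_1=\pi^{-1}(\TT_1)$ with $G_1$. Lift the connected subgraph $\mathcal Y_1\subset\mathcal Y$ to $\TT$: the preimage of $\mathcal Y_1$ under the quotient map $\TT\to\mathcal Y$ is a disjoint union of subtrees, and one of these subtrees $\TT_1$ is stabilized by a conjugate of $G_1=\pi_1(\mathcal G,\mathcal Y_1)$ (this is the standard description of the Bass--Serre tree for a subgraph of groups, using that $\mathcal Y_1\hookrightarrow\mathcal Y$ induces an injection on fundamental groups). Replacing $G_1$ by this conjugate (which does not affect hyperbolicity or the existence of CT maps), $G_1$ acts on $\TT_1$ with vertex and edge stabilizers the appropriate conjugates of vertex and edge groups of $(\mathcal G,\mathcal Y_1)$, and hence acts properly and cocompactly on $\XX_1=\pi^{-1}(\TT_1)$.

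Now I would apply Theorem \ref{ct for subtree} directly: since $\pi:\XX\to\TT$ is a tree of hyperbolic metric spaces with qi embedded condition, $\XX$ is hyperbolic, and $\TT_1\subset\TT$ is a subtree, the pullback $\XX_1$ is hyperbolic and the inclusion $\XX_1\hookrightarrow\XX$ admits a CT map. Since $G_1$ acts properly and cocompactly on the hyperbolic space $\XX_1$, another application of the Milnor--\v{S}varc lemma shows $G_1$ is hyperbolic and quasi-isometric to $\XX_1$ via an orbit map, and similarly for $G$ and $\XX$. By Lemma \ref{ct elem lemma}(2) these quasi-isometries induce boundary homeomorphisms (CT maps), and by Lemma \ref{ct elem lemma}(1) one may compose these CT maps with the CT map for $\XX_1\hookrightarrow\XX$ to conclude that the inclusion $G_1\hookrightarrow G$ admits a CT map.

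The only real content beyond Theorem \ref{ct for subtree} is the passage between the group-theoretic and the tree-of-spaces pictures; the potentially delicate point is the identification of the relevant component of the preimage of $\mathcal Y_1$ in $\TT$ as the Bass--Serre tree of $(\mathcal G,\mathcal Y_1)$ with the appropriate $G_1$-action, but this is standard Bass--Serre theory (cf.\ \cite{scott-wall} and the remark already made in Section~3 regarding the discretized tree of spaces of \cite{ps-limset}).
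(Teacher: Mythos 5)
Your proposal is correct and follows exactly the route the paper intends: the paper derives this corollary from Theorem \ref{ct for subtree} by passing to the Bass--Serre tree of spaces (as set up earlier in Section~3) and then transferring back to the groups via Milnor--\v{S}varc and Lemma \ref{ct elem lemma}; the paper simply states ``one then immediately obtains'' while you fill in the details. Your identification of the relevant component of the preimage of $\mathcal Y_1$ in the Bass--Serre tree, and the use of Corollary \ref{cor: ct lemma}-style reasoning at the end, are precisely the standard steps being elided.
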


For a metric graph bundle, we have:
\begin{theorem}{\em (CT maps for restriction bundles \cite[Section 6]{mbdl2})}\label{thm-subbdlhyp}
Suppose $\pi:\XX\map \BB$ is a metric graph bundle where $\XX$, $\BB$ and all the fibers are uniformly
hyperbolic. Also we assume that the fibers are non-elementary. Suppose $\BB_1\subset \BB$ is a connected
subgraph such that the inclusion map $\BB_1\map \BB$ is a qi embedding. 
Let $\XX_1=\pi^{-1}(\BB_1)$. Then $\XX_1$ is hyperbolic and the inclusion map
$\XX_1\map \XX$ admits a CT map.
\end{theorem}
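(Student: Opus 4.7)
The plan is to proceed in two main stages: first establish that $\XX_1$ is itself a hyperbolic metric graph bundle, then construct the Cannon--Thurston map for the inclusion $\XX_1 \hookrightarrow \XX$ via a comparison of ladders over the two bundles.

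By Lemma \ref{lemma: restriction bundle}, $\pi_1 := \pi|_{\XX_1} : \XX_1 \map \BB_1$ is an $f$-metric graph bundle, with the \emph{same} parameter function $f$ and the \emph{same} fibers $\FF_v$ for $v \in V(\BB_1)$. In particular the fibers of $\pi_1$ are uniformly hyperbolic and non-elementary, and the base $\BB_1$ is hyperbolic since it is qi embedded in the hyperbolic space $\BB$. Moreover, the inclusion $\XX_1 \hookrightarrow \XX$ is uniformly metrically proper: if $x,y \in \XX_1$ lie at bounded $\XX$-distance, then $\pi(x),\pi(y)$ lie at bounded $\BB$-distance, hence at bounded $\BB_1$-distance by the qi embedding, after which uniform metric properness of fibers in $\XX$ controls $d_{\XX_1}(x,y)$. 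To deduce hyperbolicity of $\XX_1$ I would invoke the flaring criterion of \cite{mahan-sardar}: any $k$-qi section $s$ of $\pi_1$ over a geodesic $\alpha \subset \BB_1$ is also a uniform qi section of $\pi$ over the quasigeodesic $\alpha \subset \BB$ (the qi embedding of $\BB_1$ upgrades $k$ to some uniform $k'$). Hyperbolicity of $\XX$ forces such sections to flare in the $\XX$-fiber metric; uniform metric properness of fibers in $\XX$ then translates this to flaring in $\XX_1$, yielding hyperbolicity of $\XX_1$.

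For the Cannon--Thurston map I would use the ladder construction of \cite{mitra-ct, mahan-sardar}. Given a geodesic $\gamma \subset \XX_1$, project to the quasigeodesic $\bar\gamma := \pi(\gamma) \subset \BB_1$, which is also a uniform quasigeodesic in $\BB$ by the qi-embedding hypothesis. Over $\bar\gamma$ one builds a ladder $L_{\XX_1}(\gamma) \subset \XX_1$ using qi sections in $\XX_1$, and an analogous ladder $L_{\XX}(\gamma) \subset \XX$ using qi sections in $\XX$; both are uniformly qi embedded in their respective total spaces. Compatibility of qi sections (a qi section in $\XX_1$ is \emph{a fortiori} a qi section in $\XX$ over the same base path) gives $L_{\XX_1}(\gamma) \subset L_{\XX}(\gamma)$, and up to uniformly bounded Hausdorff error $L_{\XX_1}(\gamma)$ is identified with $L_{\XX}(\gamma) \cap \XX_1$. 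A Mitra-type retraction then shows that $\XX$-geodesics joining endpoints of $\gamma$ lie in a bounded neighborhood of $L_{\XX}(\gamma)$.

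Finally I would verify the Mitra criterion for existence of a CT map: fix $x_0 \in \XX_1$; given $M > 0$ and an $\XX_1$-geodesic $\gamma$ lying outside the $N$-ball about $x_0$ in $\XX_1$, the qi-embedding of $L_{\XX_1}(\gamma)$ in $\XX_1$ forces the endpoints of $\gamma$ to lie far, in the ladder metric, from the nearest-point projection of $x_0$ to $L_{\XX_1}(\gamma)$. Since $L_{\XX_1}(\gamma) \subset L_{\XX}(\gamma)$ and $L_{\XX}(\gamma)$ is qi embedded in $\XX$, this distance survives in the larger ambient space, so any $\XX$-geodesic joining the endpoints of $\gamma$, which stays close to $L_{\XX}(\gamma)$, remains outside the $M$-ball about $x_0$ in $\XX$ provided $N$ is large enough depending on $M$. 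The genuine obstacle is the delicate compatibility between the two ladders and their retractions: the qi-embedding of $\BB_1$ into $\BB$ is precisely what is needed to keep the ladder in $\XX$ from ``spreading out'' transverse to $\XX_1$ beyond a controlled neighborhood, and without this hypothesis neither the identification $L_{\XX_1}(\gamma) = L_{\XX}(\gamma) \cap \XX_1$ up to bounded error, nor the uniform qi embedding of $L_{\XX}(\gamma)$, would be available.
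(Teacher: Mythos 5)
You should first note that the paper does not actually prove this statement: it is imported wholesale from \cite[Section 6]{mbdl2}, and the only in-paper comment is that the hyperbolicity of $\XX_1$ follows from \cite[Remark 4.4]{mahan-sardar}. Your first stage — the restriction bundle via Lemma \ref{lemma: restriction bundle}, hyperbolicity of $\BB_1$ from the qi embedding, and transfer of the flaring condition from $\pi$ to $\pi_1$ by replacing qi sections over a $\BB_1$-geodesic with qi sections over a nearby $\BB$-geodesic — is essentially that remark spelled out, and is sound.

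The Cannon--Thurston half, however, contains a concrete false step and mislocates the real difficulty. First, for a geodesic $\gamma$ of $\XX_1$ the projection $\pi(\gamma)$ is \emph{not} in general a quasigeodesic of $\BB_1$: $\pi$ is only $1$-Lipschitz, and if $x,y$ lie in a common fiber $\FF_b$ at large fiber distance, the $\XX_1$-geodesic joining them typically travels far into the bundle and back, so its projection is a long loop based at $b$. The ladder therefore cannot be erected over $\pi(\gamma)$ as you propose; the ladders of \cite{mitra-ct,mahan-sardar} are built from \emph{fiber} geodesics flowed around by qi sections, and treating an arbitrary pair $x,y\in\XX_1$ requires combining a base geodesic $[\pi(x),\pi(y)]\subset\BB_1$, qi lifts of it through $x$ and $y$, and a fiber ladder where the lifts meet — this decomposition is where the actual work of \cite{mbdl2} lies. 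Second, your closing claim that the qi embedding of $\BB_1$ ``keeps the ladder in $\XX$ from spreading out transverse to $\XX_1$'' is backwards: the $\XX$-ladder genuinely does spread over all of $\BB$ (this is visible in Theorem \ref{pullback lamination}, where the associated flow escapes to a point of $\partial\BB\setminus\partial\BB_1$). What has to be shown is that the portion of the ladder — equivalently, of the $\XX$-geodesic $[x,y]_{\XX}$ — lying over $\BB\setminus\BB_1$ cannot return near the basepoint when the $\XX_1$-geodesic stays far from it; this requires hyperbolicity of ladders and coarse retractions onto them in conjunction with the qi embedding of $\BB_1$, and is not supplied by the containment $L_{\XX_1}(\gamma)\subset L_{\XX}(\gamma)$ alone. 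So you have named the right toolbox, but the verification of Mitra's criterion as written is not yet a proof.
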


We note that the conclusion about hyperbolicity of $\XX_1$ in the above theorem is a consequence of
\cite[Remark 4.4]{mahan-sardar}. The theorem immediately implies the following. 
\begin{cor}\label{ct for pullback}(\cite[Section 6]{mbdl2})
(1) Suppose we have a developable complex of non-elementary hyperbolic groups $\GB(\YY)$ with qi condition
over a finite connected simplicial complex $\YY$, such that $G=\pi_1(\GB(\YY))$ is hyperbolic. 
Suppose $\YY_1$ is a good subcomplex of $\YY$ (as per Definition \ref{defn: good subcomplex}) and $G_1=\pi_1(\GB(\YY_1))$.
Then $G_1$ is hyperbolic and the inclusion $G_1\map G$ admits a CT map.\\
(2) Given an exact sequence of infinite hyperbolic groups 
$$1\rightarrow K\stackrel{i}{\rightarrow} G\stackrel{\pi}{\rightarrow} Q\rightarrow 1,$$
if $Q_1<Q$ is qi embedded then $G_1=\pi^{-1}(Q_1)$ is hyperbolic and the inclusion $G_1\map G$ admits a CT map.
\end{cor}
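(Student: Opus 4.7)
The plan is to reduce both statements to Theorem~\ref{thm-subbdlhyp} by realizing the group-theoretic setup as a metric graph bundle, identifying the pullback over a qi-embedded sub-base with a space on which $G_1$ acts properly and cocompactly, and then transferring the conclusions to the group setting via Milnor--{\v S}varc together with Lemma~\ref{ct elem lemma}.

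Part (2) is the cleaner application. Choose a finite generating set $S$ of $G$ which contains a generating set $A$ of $K$, and such that $B=\pi(S)\setminus\{1\}$ contains a generating set $B_1$ of $Q_1$. Example~\ref{ex: exact sequence} then produces a metric graph bundle $\pi:\XX\to\BB$ with $\XX=\Gamma(G,S)$, $\BB=\Gamma(Q,B)$, and fibers equal to left translates of $\Gamma(K,A)$; hyperbolicity of $G$, $Q$, and $K$ makes $\XX$, $\BB$, and the fibers uniformly hyperbolic (and the fibers non-elementary in the generic case, which is the only one requiring the theorem). The qi embedding $Q_1\hookrightarrow Q$ is precisely the qi embedding $\BB_1=\Gamma(Q_1,B_1)\hookrightarrow\BB$, and by Lemma~\ref{lemma: restriction bundle} the pullback $\XX_1=\pi^{-1}(\BB_1)$ is the metric graph bundle $\Gamma(G_1,S_1)\to\Gamma(Q_1,B_1)$ of Example~\ref{ex: exact sequence}. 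Theorem~\ref{thm-subbdlhyp} now gives that $\XX_1$ is hyperbolic and the inclusion $\XX_1\to\XX$ admits a CT map, which is exactly the desired statement about $G_1\hookrightarrow G$.

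For part (1), I would start from Proposition~\ref{prop1: cplx gps} to obtain a metric graph bundle $\pi:\XX\to\BB$ on which $G$ acts, with uniformly non-elementary hyperbolic fibers (the face groups of $\GB(\YY)$) and with $\XX$ hyperbolic by Milnor--{\v S}varc applied to the proper, cocompact action of the hyperbolic group $G$; hyperbolicity of $\BB$ is then automatic by the remark following Theorem~\ref{ct for fiber of bundles}. Part (5) of the same proposition, applied to the good subcomplex $\YY_1$, supplies a $G_1$-invariant connected subgraph $\BB_1\subset\BB$ on which $G_1$ acts cocompactly and such that $G_1$ acts properly and cocompactly on $\XX_1=\pi^{-1}(\BB_1)$. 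The cone-injectivity clause of Definition~\ref{defn: good subcomplex} is precisely the statement that $\BB_1\hookrightarrow\BB$ is a qi embedding, so Theorem~\ref{thm-subbdlhyp} applies to yield $\XX_1$ hyperbolic and a CT map $\XX_1\to\XX$. Milnor--{\v S}varc applied to $G_1\curvearrowright\XX_1$ and $G\curvearrowright\XX$ produces $G_1$- and $G$-equivariant quasi-isometries between the group Cayley graphs and the total spaces; Lemma~\ref{ct elem lemma}(1),(2) then transfers the CT map to the inclusion $G_1\hookrightarrow G$ and gives hyperbolicity of $G_1$.

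The main obstacle is not any single hard step but the careful translation between the combinatorial hypothesis on $\YY_1$ (goodness = injective plus cone-injective) and the geometric hypothesis on $\BB_1$ (qi embedding into $\BB$) required by Theorem~\ref{thm-subbdlhyp}; once this translation is in place, and the uniform non-elementarity of fibers has been noted, the rest is a direct application of the machinery from \cite{mbdl2} plus standard Milnor--{\v S}varc bookkeeping.
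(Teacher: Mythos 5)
Your proposal is correct and follows essentially the same route the paper intends: the corollary is stated as an immediate consequence of Theorem~\ref{thm-subbdlhyp}, obtained by realizing the two group-theoretic situations as metric graph bundles via Example~\ref{ex: exact sequence} and Proposition~\ref{prop1: cplx gps}(5) respectively, with cone-injectivity supplying the qi embedding $\BB_1\hookrightarrow\BB$ and Milnor--{\v S}varc plus Lemma~\ref{ct elem lemma} transferring the conclusion back to the groups. The only point worth tightening is your aside that the fibers are non-elementary ``in the generic case'': in part (2) this actually always holds, since an infinite normal subgroup of infinite index in a hyperbolic group cannot be two-ended, while in part (1) non-elementarity of the face groups is part of the hypothesis.
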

\iffalse
\begin{comment}
\begin{rem}\label{rem: pullback}
Corollary \ref{ct for pullback} is derived from the preceding theorem by applying the construction of metric bundles 
as in Example 
\ref{ex: exact sequence} and \ref{ex: complex groups}. 
%Recall that the construction of metric bundles  in Example \ref{ex: exact sequence} is from an exact sequence and a developable complex of hyperbolic groups with qi condition.
\end{rem}\end{comment}
\fi

We  also have the following  statements for the leaves of the CT lamination.
The CT lamination for a subtree of spaces  satisfies the following.

\begin{theorem}{\em (CT lamination for subtree of spaces, \cite[Chapter 8, Section 8.7]{kap-ps})}\label{subtree lamination}
Assume the hypotheses of Theorem \ref{ct for subtree}. Suppose that $\alpha$ is a leaf of
$\LL_{\XX}(\XX_1)$. There exist $v\in V(\TT_1)$ and a geodesic line $\beta\subset \XX_v$ such that
$\beta$ is also a quasi-geodesic in $\XX_1$ and $Hd(\alpha, \beta)<\infty$.
Moreover,  there is a point $\xi\in \partial \TT\setminus \partial \TT_1$ and a quasi-isometric
lift  $\gamma$ of the geodesic in $T$ joining $v$ to $\xi$ such that $\lim_{n\map \infty} \alpha(n)=\gamma(\infty)$.
\end{theorem}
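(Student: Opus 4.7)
The plan is to peel off the behavior of the leaf $\alpha$ by projecting it to the base tree $\TT_1$ and then analyzing the resulting flow in the total space. The first step is to show that $\pi(\alpha)$ is bounded in $\TT_1$, which I would prove by contradiction. A standard fact for trees of hyperbolic metric spaces with the qi embedded condition is that tree geodesic rays lift to quasi-geodesic rays in the total space, and distinct tree rays produce distinct limits in $\partial\XX$. Consequently, if $\pi(\alpha)$ were unbounded, the two ends $\alpha(\pm\infty)$ would escape to (possibly equal) points of $\partial\TT_1$, and each end would converge in $\partial\XX$ to the limit of the corresponding tree lift; a careful comparison then forces the two limits in $\partial\XX$ to be distinct, contradicting that $\alpha$ is a leaf of $\LL_{\XX}(\XX_1)$. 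Therefore $\alpha$ lies in a uniform neighborhood of some fiber $\XX_v$ with $v\in V(\TT_1)$.

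Next I would produce the geodesic $\beta$. Applying Lemma \ref{lem: prop emb}(2) to the uniformly metrically proper inclusion $\XX_v\hookrightarrow\XX_1$, the nearest-point projection of $\alpha$ onto $\XX_v$ is a quasigeodesic in the intrinsic metric $d_v$ with endpoints $\xi_1,\xi_2\in\partial\XX_v$. Let $\beta\subset\XX_v$ be a $d_v$-geodesic joining $\xi_1$ to $\xi_2$. The Morse lemma in the hyperbolic space $\XX_v$ gives bounded $d_v$-Hausdorff distance between $\beta$ and the projected quasigeodesic, and hence bounded $\XX_1$-Hausdorff distance $Hd_{\XX_1}(\alpha,\beta)<\infty$. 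Since $\beta$ is then Hausdorff-close in $\XX_1$ to the $\XX_1$-quasigeodesic $\alpha$ with the same endpoints at infinity, a standard parameterization argument (Morse-type) shows that $\beta$ itself is a quasigeodesic of $\XX_1$.

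For the final assertion, observe that the two endpoints of $\beta$ in $\partial\XX_v\subset\partial\XX_1$ are identified by the CT map $\partial\XX_1\to\partial\XX$ to a common point $\eta\in\partial\XX$; in other words, $\beta$ is a leaf of the fiber lamination $\LL_{\XX}(\XX_v)$. The ladder construction of \cite{mitra-ct} associated to $\beta$ is a quasiconvex subset of $\XX$ whose asymptotic pinching selects a tree direction $\xi\in\partial\TT$: a quasi-isometric lift $\gamma$ of the $\TT$-geodesic from $v$ to $\xi$ satisfies $\gamma(\infty)=\eta$, and by construction $\alpha(n)\to\gamma(\infty)$ as $n\to\infty$. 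The crucial point is that $\xi\notin\partial\TT_1$. Indeed, if $\xi$ were in $\partial\TT_1$, the same ladder construction could be carried out entirely inside $\XX_1=\pi^{-1}(\TT_1)$, and the corresponding pinching would identify the two endpoints of $\beta$ already in $\partial\XX_1$, contradicting their distinctness in $\partial\XX_v\subset\partial\XX_1$.

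The principal obstacle I expect is this last step---locating the tree direction $\xi$ and excluding $\xi\in\partial\TT_1$---which requires a careful parallel analysis of the ladder construction performed inside the ambient space $\XX$ versus inside the sub-tree-of-spaces $\XX_1$, exploiting that the sole mechanism for pinching fiber-leaf endpoints in a tree of hyperbolic spaces is the escape of the ladder along a tree direction. The boundedness of $\pi(\alpha)$ in the first step is also nontrivial, relying on the standard but delicate fact that distinct tree rays in $\TT$ give rise to distinct quasi-isometric lifts converging to distinct points of $\partial\XX$.
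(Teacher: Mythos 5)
The paper does not actually prove this statement: it is imported wholesale from the unpublished reference \cite{kap-ps}, so there is no in-house argument to compare yours against, and I can only judge the outline on its own terms. Its architecture is the expected one (show the leaf is carried by a single fiber, replace it by a fiber geodesic, then locate the pinching direction in $\partial \TT\setminus\partial\TT_1$), and the middle step---producing $\beta$ from $\alpha$ via Lemma \ref{lem: prop emb}(2) and the Morse lemma in $\XX_v$---is fine and mirrors what the paper itself does in the proof of Corollary \ref{general cor}(4). But the two decisive steps are asserted rather than proved, and the assertions on which they rest are not correct as stated.

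First, the boundedness of $\pi(\alpha)$. Your contradiction hinges on the ``standard fact'' that distinct tree rays produce distinct limits in $\partial\XX$. This is not a standard fact, and it is essentially equivalent to the injectivity statement you are trying to prove: the conclusion of the theorem itself says that the pinch point of a leaf \emph{is} the limit $\gamma(\infty)$ of a qi lift over some ray of $\TT$, so limits of lifts over distinct rays can certainly coincide with CT-images of fiber boundary points, and ruling out such coincidences for rays inside $\TT_1$ is the heart of the matter, not an input to it. You also leave unaddressed the cases where $\pi\circ\alpha$ is unbounded but oscillates (so neither end of $\pi\circ\alpha$ converges in $\TT_1\cup\partial\TT_1$), where only one end escapes, and where both ends escape to the \emph{same} point of $\partial\TT_1$---in which last case ``distinct rays give distinct limits'' says nothing. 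Second, in the final step the exclusion $\xi\notin\partial\TT_1$ is argued correctly in spirit (a pinch realized over a ray of $\TT_1$ would already identify the endpoints in $\partial\XX_1$), but the prior claim---that the only mechanism identifying the endpoints of $\beta$ is the contraction of its ladder along some tree direction $\xi$, with $\alpha(n)\to\gamma(\infty)$ for a lift $\gamma$ over $[v,\xi)$---is precisely the main content of the ladder analysis of \cite{mitra-trees} and \cite{kap-ps}, and is only named, not carried out. Since you flag both points yourself as the principal obstacles, the proposal should be read as a plausible plan with the two genuinely hard steps still open.
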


One then immediately obtains: % (see Remark \ref{rem: tree}):
\begin{cor}\label{lamination for subtree}{\em (CT lamination for subgraph of groups, \cite[Chapter 8, section 8.11]{kap-ps})}\label{lamination for subgraph}
Assume the hypotheses of Corollary \ref{ct for subgraph}.
Given a leaf $\alpha$ of the CT lamination $\LL_G(G_1)$, there exist $v\in V(\YY_1)$ and $g\in G_1$
such that $\alpha$ is contained in a finite neighborhood of $gG_vg^{-1}$. More precisely
there is a geodesic $\beta$ in $gG_vg^{-1}$ which is also a quasi-geodesic in $G_1$ and $Hd(\alpha, \beta)<\infty$.
\end{cor}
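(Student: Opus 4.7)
The plan is to deduce this from Theorem \ref{subtree lamination} via the Bass-Serre tree of metric spaces construction (\cite{scott-wall,BF}, and \cite[Section 3]{ps-limset} as referenced above). Given the finite graph of hyperbolic groups $(\mathcal G,\mathcal Y)$ with qi embedded condition, we first realize a $G$-equivariant tree of hyperbolic metric spaces $\pi:\XX\map\TT$: the tree $\TT$ is the Bass-Serre tree, each vertex space $\XX_w$ (resp.\ edge space $\XX_e$) carries a proper cocompact action of the stabilizer of $w$ (resp.\ $e$), and the qi embedded condition for the graph of groups is precisely the qi embedded condition for the tree of spaces. Hyperbolicity of $G$ together with the Milnor-\v Svarc lemma gives hyperbolicity of $\XX$. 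The connected subgraph $\mathcal Y_1$ lifts, using the injectivity $G_1=\pi_1(\mathcal G,\mathcal Y_1)\hookrightarrow G$, to a $G_1$-invariant subtree $\TT_1\subset\TT$ canonically identified with the Bass-Serre tree of $(\mathcal G,\mathcal Y_1)$; the preimage $\XX_1=\pi^{-1}(\TT_1)$ is then $G_1$-invariant, and $G_1$ acts properly cocompactly on it.

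Fix a basepoint $x_0\in\XX_1$ over a vertex of $\TT_1$, and let $\phi:G\map\XX$, $g\mapsto g.x_0$, be the orbit map. By the Milnor-\v Svarc lemma \cite[Proposition 8.19]{bridson-haefliger}, $\phi$ is a $G$-equivariant quasi-isometry whose restriction to $G_1$ is a $G_1$-equivariant quasi-isometry onto a coarsely dense subset of $\XX_1$. By Lemma \ref{ct elem lemma}(2), these quasi-isometries induce homeomorphisms $\partial G\cong\partial\XX$ and $\partial G_1\cong\partial\XX_1$ compatible with the CT maps for $(G_1,G)$ and $(\XX_1,\XX)$. Consequently, the CT laminations $\LL_G(G_1)$ and $\LL_\XX(\XX_1)$ correspond bijectively, and any leaf $\alpha$ of $\LL_G(G_1)$ is taken by $\phi$ to within finite Hausdorff distance of a leaf $\alpha'$ of $\LL_\XX(\XX_1)$.

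Now apply Theorem \ref{subtree lamination} to $\alpha'$: there exist $v\in V(\TT_1)$ and a geodesic line $\beta'\subset\XX_v$ that is also a quasi-geodesic in $\XX_1$ with $Hd(\alpha',\beta')<\infty$. By Bass-Serre theory, the $G_1$-stabilizer of $v$ is a conjugate $gG_\sigma g^{-1}$ for some $\sigma\in V(\mathcal Y_1)$ and some $g\in G_1$, and this conjugate acts properly cocompactly on $\XX_v$. The Milnor-\v Svarc lemma then yields a $gG_\sigma g^{-1}$-equivariant quasi-isometry $gG_\sigma g^{-1}\map\XX_v$; composing with the coarse inverse of $\phi|_{G_1}$ produces a quasi-geodesic $\tilde\beta$ in $G_1$ contained in $gG_\sigma g^{-1}$ with $Hd(\alpha,\tilde\beta)<\infty$ in $G$. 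Since $gG_\sigma g^{-1}$ is hyperbolic, $\tilde\beta$ lies within bounded Hausdorff distance of a genuine geodesic line $\beta$ in $gG_\sigma g^{-1}$, giving the refined conclusion.

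The main obstacle I anticipate is the bookkeeping among the several metrics in play: the ambient metric on $\XX$, the induced path metrics on $\XX_1$ and $\XX_v$, and the word metrics on $G$, $G_1$, and $gG_\sigma g^{-1}$. Specifically, one must verify, using the qi embedded condition at each step and Lemma \ref{lem: prop emb}, that a geodesic in $\XX_v$ which is a quasi-geodesic in $\XX_1$ corresponds, under the Milnor-\v Svarc quasi-isometries, to a quasi-geodesic in the word metric on $G_1$ whose image lies in the conjugate subgroup $gG_\sigma g^{-1}$; everything else in the argument is essentially a transcription between the space-level statement of Theorem \ref{subtree lamination} and the group-level statement at hand.
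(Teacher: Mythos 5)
Your argument is correct and is exactly the deduction the paper intends: the corollary is stated as an immediate consequence of Theorem \ref{subtree lamination}, obtained by passing to the Bass--Serre tree of spaces, applying the Milnor--\v{S}varc lemma to identify $G$ with $\XX$ and $G_1$ with $\XX_1$ (so that the two CT laminations correspond), and translating the vertex space $\XX_v$ back into the conjugate vertex group $gG_vg^{-1}$. The metric bookkeeping you flag at the end is handled in the paper by Lemma \ref{lem: prop emb} together with the Morse lemma, just as you describe.
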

Similar statements hold also for the restriction bundle of a metric graph bundle:

\begin{theorem}{\em (CT lamination for restriction bundle, \cite[Section 6]{mbdl2})}\label{pullback lamination}
Assume the hypotheses of Theorem \ref{thm-subbdlhyp}.
Suppose that $\alpha$ is a leaf of
$\LL_{\XX}(\XX_1)$. Then for all $v\in V(\BB_1)$ there is a geodesic line $\beta\subset \XX_v$ such that
$\beta$ is also a quasi-geodesic in $\XX_1$ and $Hd(\alpha, \beta)<\infty$.
Moreover, there exists $\xi\in \partial \BB\setminus \partial \BB_1$ and a qi section  $\gamma$ over a geodesic
in $\BB$ converging to $\xi$ such that $\lim_{n\map \infty} \alpha(n)=\gamma(\infty)$.
\end{theorem}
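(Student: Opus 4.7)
The plan is to adapt the strategy of the tree-of-spaces version, Theorem \ref{subtree lamination}, to the metric graph bundle setting. The essential new feature is that the bundle structure provides Mj--Sardar qi sections that transport fiber geodesics across the base with uniform control, which is precisely why the conclusion delivers a geodesic $\beta\subset\FF_v$ in \emph{every} fiber $v\in V(\BB_1)$ rather than in just one distinguished vertex space. Throughout I would use, as black boxes, the existence of CT maps $\partial\XX_1\to\partial\XX$ and $\partial\FF_v\to\partial\XX$ (Theorems \ref{thm-subbdlhyp} and \ref{ct for fiber of bundles}), the structural description of leaves of $\LL_\XX(\FF_v)$ underlying the proof of Theorem \ref{ct for fiber of bundles} in \cite{mahan-sardar}, and the flaring/ladder estimates developed there.

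The first and most substantial step is to show that $\pi(\alpha)\subset\BB_1$ is bounded, equivalently that $\alpha$ lies in a bounded $\XX_1$-neighborhood of some single fiber $\FF_{v_0}$. I would argue by contradiction. If $\pi(\alpha)$ were unbounded, then after passing to subsequences the two ends of $\pi(\alpha)$ accumulate on points of $\partial\BB_1$, which embed in $\partial\BB$ via the qi embedding $\BB_1\hookrightarrow\BB$ (Lemma \ref{limit set lemma}(1)). In the generic case these accumulation points are distinct ideal points of $\partial\BB$; lifting the corresponding geodesic rays in $\BB$ to qi sections in $\XX$ and using flaring, one obtains distinct ideal points of $\partial\XX$ associated to $\alpha(+\infty)$ and $\alpha(-\infty)$, contradicting the leaf hypothesis. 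The degenerate case, where both ends accumulate on a common $\eta\in\partial\BB$, is handled using non-elementarity of the fibers together with the hyperbolicity of $\XX_1$: the two ends of $\alpha$ must eventually separate within a common fiber along the qi section over $\eta$, which again forces distinct boundary points in $\partial\XX$.

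Once horizontal boundedness is established, the nearest-point projection of $\alpha$ onto $\FF_{v_0}$ is well-defined up to bounded error by the uniform properness of $\FF_{v_0}\hookrightarrow\XX_1$ (Definition \ref{defn-mgbdl}(1) and Lemma \ref{lem: prop emb}), and stability of quasi-geodesics in the hyperbolic space $\FF_{v_0}$ yields a bi-infinite geodesic $\beta_{v_0}\subset\FF_{v_0}$ with $Hd(\alpha,\beta_{v_0})<\infty$. For any other $v\in V(\BB_1)$, I would pick a $\BB_1$-geodesic from $v_0$ to $v$ and transport $\beta_{v_0}$ using qi sections of $\pi$ along it: the two points $\beta_{v_0}(\pm\infty)\in\partial\FF_{v_0}$ move to a pair of points of $\partial\FF_v$ joined by a geodesic line $\beta_v\subset\FF_v$; the Mj--Sardar flow estimates of \cite{mahan-sardar} keep $\beta_v$ at bounded $\XX_1$-Hausdorff distance from $\beta_{v_0}$, hence from $\alpha$, and $\beta_v$ is a quasi-geodesic in $\XX_1$ by Lemma \ref{lem: prop emb}(2). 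Since $\alpha(\pm\infty)$ collapse in $\partial\XX$, so do $\beta_v(\pm\infty)$, so $\beta_v$ is a leaf of $\LL_\XX(\FF_v)$. The structure theorem underlying Theorem \ref{ct for fiber of bundles} then supplies $\xi\in\partial\BB$ together with a qi section $\gamma$ over a geodesic in $\BB$ from $v$ to $\xi$ such that $\lim_{n\to\infty}\alpha(n)=\gamma(\infty)$. Finally $\xi\notin\partial\BB_1$: otherwise $\gamma$ could be arranged inside $\XX_1$, forcing $\beta_v(+\infty)=\beta_v(-\infty)$ already in $\partial\XX_1$ and contradicting the leaf hypothesis.

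The principal obstacle I anticipate is the horizontal boundedness step: leaves of $\LL_\XX(\XX_1)$ are a priori arbitrary bi-infinite geodesics of $\XX_1$, and ruling out unbounded base projection requires a delicate comparison of $\XX_1$-geometry with $\XX$-geometry along $\alpha$. The qi-embeddedness of $\BB_1$ in $\BB$ is essential here, as it allows one to transfer flaring-type estimates from the ambient bundle $\pi:\XX\to\BB$ down to the restriction bundle $\pi_1:\XX_1\to\BB_1$ and thereby separate the two ends of $\alpha$ at infinity in $\partial\XX$.
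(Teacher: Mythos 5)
First, a point of comparison: the paper does not actually prove Theorem \ref{pullback lamination} — it is imported wholesale from \cite[Section 6]{mbdl2} — so there is no internal proof to measure your argument against. Your outline is a plausible reconstruction of the shape of the argument in that reference: reduce to a single fiber, flow the resulting fiber geodesic to every other fiber of $\BB_1$ by qi sections, extract the direction $\xi\in\partial\BB$ from the Mj--Sardar description of fiberwise leaves, and rule out $\xi\in\partial\BB_1$ using qi-embeddedness of $\BB_1$. The second and third stages of your sketch are essentially sound modulo the quoted machinery, and your argument for $\xi\notin\partial\BB_1$ (a qi section over a $\BB$-ray to a point of $\partial\BB_1$ can be replaced by one lying in $\XX_1$, so the endpoints of $\beta_v$ would already be identified in $\partial\XX_1$) is the right one.

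The genuine gap is in your first step, which is also where the entire technical content of the theorem lives. You assert that if $\pi(\alpha)$ is unbounded then the two ends of $\pi(\alpha)$ accumulate on points of $\partial\BB_1$, and that lifting rays toward these points separates $\alpha(+\infty)$ from $\alpha(-\infty)$ in $\partial\XX$. Neither claim is justified as stated: $\pi(\alpha)$ is only the Lipschitz image of an $\XX_1$-geodesic, not a quasigeodesic of $\BB_1$, so an unbounded end need not converge to any boundary point (it may have many accumulation points, or one end may project boundedly while the other does not); and even granting that $\pi(\alpha(n))\map\eta\in\partial\BB_1$, the assertion that $\alpha(n)$ then converges in $\partial\XX$ to the endpoint of a qi section over a ray to $\eta$ is precisely the explicit description of the CT map $\partial\XX_1\map\partial\XX$ — the main result of \cite[Section 6]{mbdl2} — and cannot be obtained by invoking ``flaring'' as a black box. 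The same objection applies to your ``degenerate case.'' In effect, your proof presupposes a structure theorem for $\partial\XX_1\map\partial\XX$ (every boundary point of $\XX_1$ is either a fiberwise point or the endpoint of a qi section over a ray in $\BB_1$, and the CT map is injective on the latter class because $\BB_1$ is qi embedded in $\BB$) that is logically prior to, and essentially equivalent to, the statement being proved. Making this rigorous requires reproducing the ladder and flaring analysis of \cite{mahan-sardar} and \cite{mbdl2}, not merely gesturing at it.
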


This immediately gives: % (see Remark \ref{rem: pullback}):
\begin{cor}\label{lamination for pullback}{\em (CT lamination for subcomplexes of groups, \cite[Section 6]{mbdl2})}

(1) Assume the hypotheses of Corollary \ref{ct for pullback}(1). Then for any $v\in V(\YY_1)$ and $g\in G$,
any leaf $\alpha$ of the CT lamination $\LL_{G}(G_1)$ is contained in a finite neighborhood of  $gG_vg^{-1}$.

(2) Assume the hypotheses  of Corollary \ref{ct for pullback}(2).
Then given any leaf $\alpha$ of the CT lamination $\LL_G(G_1)$ there is a geodesic line $\beta$ in $K$ 
such that it is a quasi-geodesic of $G_1$ and $Hd(\alpha, \beta)<\infty$.
%Further, there exists $\xi\in \partial Q\setminus \partial Q_1$ such that 
%over any geodesic of $Q$ converging  to $\xi$ there is a qi section $\gamma$
%satisfying $\lim_{n\map \infty} \alpha(n)=\gamma(\infty)$.
\end{cor}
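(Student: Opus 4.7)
Both parts reduce to Theorem \ref{pullback lamination} applied to a suitable metric graph bundle model. In each case one first constructs the bundle $\pi:\XX\to\BB$ together with the subbundle $\XX_1\to\BB_1$, verifies $\BB_1\hookrightarrow\BB$ is qi embedded so that Theorems \ref{thm-subbdlhyp} and \ref{pullback lamination} apply, and then translates the conclusion that a leaf $\alpha$ lies close to a geodesic line in some fiber $\FF_v$ into the desired group-theoretic statement via the Milnor-{\v S}varc lemma. The substantive work is already black-boxed in Theorem \ref{pullback lamination}; here the only genuinely nontrivial point is the final translation from fiber geometry in $\XX$ to subgroup geometry in $\Gamma_G$.

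For part (2), Example \ref{ex: exact sequence} gives $\XX = \Gamma(G,S)$, $\BB = \Gamma(Q,B)$, $\XX_1 = \Gamma(G_1,S_1)$, $\BB_1 = \Gamma(Q_1,B_1)$, with each fiber $\FF_v$ a translate of $\Gamma(K,A)$. The qi embedding $Q_1\hookrightarrow Q$ gives the required qi embedding $\BB_1\hookrightarrow\BB$. I then apply Theorem \ref{pullback lamination} at the identity vertex $1_Q\in V(\BB_1)$: the fiber is (identified with) $\Gamma(K,A)$, and the theorem directly produces a geodesic line $\beta$ in $K$ at finite Hausdorff distance from $\alpha$ in $\XX$ that is also a quasigeodesic of $\XX_1 = \Gamma(G_1,S_1)$, i.e.\ a quasigeodesic of $G_1$.

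For part (1), apply Proposition \ref{prop1: cplx gps}(5) to produce $\pi:\XX\to\BB$ together with the $G_1$-invariant subgraph $\BB_1$ satisfying $p(\BB_1/G_1)=\YY_1$. The cone-injective clause in the definition of ``good'' gives $\BB_1\hookrightarrow\BB$ qi embedded. Theorem \ref{pullback lamination} then says that, for any $u\in V(\BB_1)$, any leaf $\alpha$ of $\LL_\XX(\XX_1)$ lies within finite Hausdorff distance in $\XX$ of a geodesic in $\FF_u$. Given $v\in V(\YY_1)$, I choose $u\in V(\BB_1)$ lifting $v$; then $G_u$ is a conjugate of $G_v$ in $G$ and acts properly and cocompactly on $\FF_u$ by Proposition \ref{prop1: cplx gps}(4). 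By Milnor-{\v S}varc applied to $G\curvearrowright\XX$, the leaf $\alpha$ lies in a finite neighborhood of $G_u$ in $\Gamma_G$.

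It remains to replace $G_u$ by an arbitrary conjugate $gG_vg^{-1}$. Mimicking the argument in Corollary \ref{cor: fiber limit set}: $gG_vg^{-1}$ stabilizes some vertex $w\in V(\BB)$ and acts cocompactly on $\FF_w$; Lemma \ref{bundle lemma1} gives $Hd(\FF_u,\FF_w)<\infty$ in $\XX$; Lemma \ref{Hd lemma} then forces $Hd(G_u, gG_vg^{-1})<\infty$ in $\Gamma_G$. Combining with the previous paragraph, $\alpha$ lies in a finite neighborhood of $gG_vg^{-1}$ in $\Gamma_G$, as required. Since we only need finiteness of the neighborhood (not any uniformity over leaves, vertices, or $g$), no delicate constant chasing is needed.
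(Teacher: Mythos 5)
Your proposal is correct and is essentially the derivation the paper intends: the corollary is stated as an immediate consequence of Theorem \ref{pullback lamination} applied to the bundle models of Proposition \ref{prop1: cplx gps}(5) and Example \ref{ex: exact sequence}, translated back to Cayley graphs via Milnor--{\v S}varc. Your extra step passing from the stabilized fiber $G_u$ to an arbitrary conjugate $gG_vg^{-1}$ via Lemmas \ref{bundle lemma1} and \ref{Hd lemma} is exactly the right way to fill in the one detail the paper leaves implicit.
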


\iffalse
The first part of the above corollary can also have a statement similar to the last part of Theorem \ref{pullback lamination}
but since it is a little clumsy we skip mentioning it.

\fi

%%%%%%%%%%%%%%%%%%%%%%%%%%%%%%%%%%%%%%%%%%%%%%%%%%%%%%%%%%%%%%%%%%%%%%%%%%%%%%%%
%%%%%%%%%%%%%%%%%%%%%%%%%%%%%%%%%%%%%%%%%%%%%%%%%%%%%%%%%%%%%%%%%%%%%%%%%%%%%%%%

\section{Distortion comes from  fibers}\label{sec-propagate}
In this section, we prove the main theorems of this paper
and provide precise statements in support of Scholium \ref{schol-propagate}.
 The first two subsections correspond to the two main sources of examples of metric graph bundles
that we mentioned in section 3.1. In the last section we discuss applications to graphs of groups.

\subsection{Exact sequences of hyperbolic groups}\label{sec-es}
Let $$\,1\map K\map G\stackrel{\pi}{\map} Q\map 1 \quad (*)\,$$ be an exact sequence of infinite hyperbolic groups
and $Q_1< Q$ be a qi embedded subgroup. Let $G_1=\pi^{-1}(Q_1)$. Then $G_1$ is a hyperbolic group by \cite[Remark 4.4]{mahan-sardar}.
Suppose $H$ is a qi embedded, infinite index subgroup of $G_1$. The following theorem asserts  that if $H$ is distorted  in $G$ (cf.\ Definition \ref{defn-disto}; \cite[Chapter 4]{gromov-ai}), then
this is due to the presence of a finitely generated subgroup $K_1
< K \cap H$ such that $K_1$ is distorted in $G$. In other words, the cause
of distortion lies in finitely generated subgroups of the fiber group $K$.

\begin{theorem}\label{general fiber} With the above assumptions and notation,
	suppose $H$ is not qi embedded in $G$. Then there is a finitely generated, infinite subgroup $K_1$ of
	$K\cap H$ such that $\Lambda_{K}(K_1) \neq \partial K$,
	 $[K:K_1]=\infty$ and $K_1$  is distorted in $G$.
\end{theorem}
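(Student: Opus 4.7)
The plan is to reduce the statement to an application of Corollary \ref{general cor}, with the subgroups $H$ and $K$ of $G_1$ playing the roles indicated there. The main content is to verify all four hypotheses of that corollary and then to exhibit a leaf of the CT lamination $\LL_G(H)$ that is contained in a finite neighborhood of $K$, so that the conclusions of Corollary \ref{general cor} can be invoked.

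First I would check the hypotheses of Corollary \ref{general cor}. Since $Q_1$ is qi embedded in $Q$, Corollary \ref{ct for pullback}(2) gives that $G_1$ is hyperbolic and the inclusion $G_1 \hookrightarrow G$ admits a CT map. Because $K$ is normal in $G$ and $\pi(K)=\{1\}\subset Q_1$, we have $K\trianglelefteq G_1$; applying $G_1$ in place of $G$ in the exact sequence $(*)$ gives the restricted short exact sequence $1\to K\to G_1\to Q_1\to 1$ of hyperbolic groups. Theorem \ref{kernel ct} then yields a CT map for the inclusion $K\hookrightarrow G_1$, verifying (i). Normality of $K$ in $G_1$ combined with Lemma \ref{limit set lemma}(2)(i) gives $\Lambda_{G_1}(K)=\partial G_1$, which is (ii). Hypotheses (iii) and (iv) are exactly the assumptions on $H$ in the theorem.

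Next I would produce the required leaf. Since $H$ is qi embedded in $G_1$ and $G_1\hookrightarrow G$ admits a CT map, composition (Lemma \ref{ct elem lemma}(1),(2)) shows that $H\hookrightarrow G$ admits a CT map. As $H$ is not qi embedded in $G$, Lemma \ref{ct elem lemma}(3) guarantees that $\LL_G(H)$ is non-empty; pick any leaf $\alpha$. Viewed in $G_1$, the quasi-geodesic line $\alpha$ remains a quasi-geodesic by Lemma \ref{lem: prop emb}, and its two endpoints in $\partial G_1$ (identified via the qi embedding $\partial H\hookrightarrow \partial G_1$) are distinct but have the same image in $\partial G$. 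Hence $\alpha$ is also a leaf of $\LL_G(G_1)$. Corollary \ref{lamination for pullback}(2) then produces a geodesic $\beta\subset K$ with $Hd(\alpha,\beta)<\infty$, so $\alpha$ is carried by $K$ in the sense of Definition \ref{defn-carry}.

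Finally, Corollary \ref{general cor} applied to this leaf $\alpha$ delivers a finitely generated subgroup $K_1<H\cap K$ satisfying conclusions (1)--(4) there. Conclusion (2) of that corollary gives $\Lambda_K(K_1)\neq \partial K$ and hence $[K:K_1]=\infty$, and conclusion (3) gives that $K_1$ is distorted in $G$; infiniteness of $K_1$ follows from $[K:K_1]=\infty$ (or directly from the fact that $\alpha$ is infinite and lies in a finite neighborhood of $K_1$ by (1)). This gives the subgroup required by the theorem. The main technical point — and the step most likely to need care — is the passage from a leaf of $\LL_G(H)$ to one of $\LL_G(G_1)$ that is then carried by $K$; once this is in place the rest is a direct citation of Corollary \ref{general cor}.
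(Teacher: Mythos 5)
Your proposal is correct and follows essentially the same route as the paper: establish the CT maps via Corollary \ref{ct for pullback}(2), Theorem \ref{kernel ct} and Lemma \ref{ct elem lemma}, extract a leaf of $\LL_G(H)$ from the failure of qi embedding, show it is carried by $K$ via Corollary \ref{lamination for pullback}(2), and conclude by Corollary \ref{general cor}. Your explicit verification that the leaf of $\LL_G(H)$ is also a leaf of $\LL_G(G_1)$, and that $K\trianglelefteq G_1$ gives $\Lambda_{G_1}(K)=\partial G_1$, only makes precise steps the paper leaves implicit.
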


\begin{proof}
The inclusion $G_1\map G$ admits a CT map by  Corollary \ref{ct for pullback}(2). 
Since $H$ is qi embedded in $G_1$, there is a CT map for the inclusion $H\map G_1$ by Lemma \ref{ct elem lemma}(2).
 By Lemma \ref{ct elem lemma}(1) the inclusion $i_{H,G}:H\map G$ also admits a CT map 
$\partial i_{H,G}: \partial H\map \partial G$. Since $H$ is not qi embedded in $G$, by Lemma \ref{ct elem lemma}(3)
there is a geodesic line $\gamma$ in $H$ whose end points are identified by the CT map 
$\partial i_{H,G}$. By Corollary \ref{lamination for pullback}(2), $\gamma$ is contained in a bounded neighborhood of $K$.

\iffalse
	content...

there exists a geodesic line 
$\alpha \subset K$ within a finite Hausdorff distance (in $G_1$, and hence in $G$) of $\gamma$ such that
\begin{enumerate}
\item $\alpha$ is  a quasigeodesic in $G_1$ and hence in $H$;
\item $\alpha$ is a leaf of the CT lamination $\LL_G(H)$. 
\end{enumerate} 
\fi

Recall that $\Lambda_G(K)=\partial G$ by Lemma \ref{limit set lemma}(2)(i). Also the inclusion $K\map G_1$
admits a CT map by Theorem \ref{kernel ct}.  The theorem now follows immediately from Corollary \ref{general cor}.
\end{proof}

\noindent {\bf Surface group fibers:} We specialize the exact sequence $(*)$ above to the case where 
\begin{enumerate}
\item $K= \pi_1(\Sigma)$, 
with $\Sigma$ a closed orientable surface of genus at least $2$. 
\item $Q$ is a subgroup of $MCG(\Sigma)$.
\end{enumerate}
 It follows
\cite{hamen,kl-coco,mahan-sardar} that $Q$ is an infinite convex cocompact subgroup
of the mapping class group $MCG(\Sigma)$.
We shall need the following Theorem by Dowdall, Kent and Leininger \cite[Theorem 1.3]{dkl} (see also \cite[Theorem 4.7]{mahan-rafi} for a different proof).
\begin{theorem} \label{thm-dklmr1}
	Consider the exact sequence $(*)$ with $K=\pi_1(\Sigma)$ ($\Sigma$ closed) and $Q$ convex cocompact. 
	Let $K_1$ be a finitely generated infinite index subgroup of $K$. Then  $K_1$ is quasi-convex  in $G$.
\end{theorem}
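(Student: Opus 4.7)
The plan is to argue by contradiction, using Corollary \ref{general cor} together with a structural fact about the Cannon-Thurston lamination $\LL_G(K)$ that comes from convex cocompactness of $Q$. First, I would record what is freely available: since $Q$ is convex cocompact, $G$ is hyperbolic and the inclusion $K\hookrightarrow G$ admits a CT map by Theorem \ref{kernel ct}. Moreover, any finitely generated infinite-index subgroup of the surface group $K=\pi_1(\Sigma)$ is the fundamental group of a compact incompressible subsurface (by the compact core theorem for surfaces) and is therefore quasiconvex in $K$; in particular $K_1$ is qi embedded in $K$, so the composition $K_1\hookrightarrow K\hookrightarrow G$ admits a CT map.

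Now assume for contradiction that $K_1$ is not qi embedded in $G$, so that $\LL_G(K_1)$ is nonempty by Lemma \ref{ct elem lemma}(3). I would apply Corollary \ref{general cor} with $G_1:=K$, $H:=K_1$, and the subgroup called ``$K$'' in the corollary equal to our $K$: hypothesis (i) is Theorem \ref{kernel ct}, (ii) holds trivially, (iii) is the surface-group fact above, and (iv) is the standing assumption. The corollary then produces a finitely generated subgroup $K_2\le K_1$ with $\Lambda_K(K_2)\ne\partial K$, and in particular $[K:K_2]=\infty$. Applying the surface-group fact once more shows $K_2$ is qi embedded in $K$, so conclusion (4) of the corollary produces a leaf $\beta$ of $\LL_G(K)$ carried (in the sense of Definition \ref{defn-carry}) by $K_2$.

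The main obstacle is to contradict the existence of such a $\beta$. Under an appropriate marking, the finitely generated infinite-index subgroup $K_2$ corresponds to a proper essential subsurface $W\subsetneq \Sigma$, so the projection of $\beta$ to $\Sigma$ would be contained in $W$. The plan is to show that every leaf of $\LL_G(K)$ projects to a lamination on $\Sigma$ that is \emph{filling} --- i.e.\ intersects every essential simple closed curve --- which immediately contradicts containment in a proper subsurface. This filling property is the genuine geometric input of the theorem, and is precisely where convex cocompactness of $Q$ enters: it forces every accumulation point of a $Q$-orbit in the Thurston boundary (equivalently, every algebraic ending lamination associated to an element or limit of $Q$, in the sense of \cite{mitra-endlam}) to be supported by a filling lamination, and these laminations describe exactly the leaves of $\LL_G(K)$ via the Cannon-Thurston identifications. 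Importing this structural statement is the real content of the theorem; once it is in hand, the contradiction is immediate and the quasiconvexity of $K_1$ in $G$ follows.
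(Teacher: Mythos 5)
First, a point of calibration: the paper does not prove Theorem \ref{thm-dklmr1} at all. It is imported verbatim from Dowdall--Kent--Leininger \cite[Theorem 1.3]{dkl} (with \cite[Theorem 4.7]{mahan-rafi} cited as an alternative proof) and used as a black box in the proof of Theorem \ref{surface fiber}, so there is no internal argument to compare yours against. That said, your reduction is sound as far as it goes and is essentially the skeleton of the argument in \cite{mahan-rafi}. Your preliminary facts are correct (finitely generated infinite-index subgroups of a closed surface group are carried by compact essential subsurfaces and are quasiconvex in $K$), and the application of Corollary \ref{general cor} with $G_1=K$, $H=K_1$ and the corollary's ``$K$'' equal to $K$ itself is legitimate: the standing hypothesis that $G_1=K\hookrightarrow G$ admits a CT map is Theorem \ref{kernel ct} (note that this, rather than hypothesis (i), is where Theorem \ref{kernel ct} is actually used --- hypothesis (i) is the identity map and is vacuous here), (ii) is trivial, and (iii), (iv) are as you say. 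Conclusions (2) and (4) then yield a finitely generated, infinite-index, quasiconvex $K_2<K$ carrying a leaf of $\LL_G(K)$, whence a leaf whose geodesic realization on $\Sigma$ misses an essential simple closed curve and so is non-filling.

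The gap is the one you flag yourself: the assertion that every leaf of $\LL_G(K)$ is filling is not proved, and it is the entire mathematical content of the theorem rather than a routine import. Establishing it requires (a) identifying $\LL_G(K)$ with the union of algebraic ending laminations $\bigcup_{z\in\partial Q}\Lambda_z$ in the sense of \cite{mitra-endlam}, (b) identifying each $\Lambda_z$ with a genuine surface lamination coming from the Gromov boundary of the curve complex (Klarreich's theorem), which is precisely where convex cocompactness of $Q$ enters, and (c) the arationality of such boundary laminations. None of (a)--(c) is available from the machinery developed in this paper --- indeed the paper explicitly records the absence of such a lamination theory in more general settings as Problem \ref{qn-el}, and this is why Theorems \ref{surface fiber} and \ref{freefiber} have no analogues for complexes or graphs of groups. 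So as written your proposal reduces the theorem to its hard core without proving it; to be a proof it would either have to carry out (a)--(c) or, more honestly, simply cite \cite{dkl} or \cite{mahan-rafi} as the paper does.
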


 Suppose $Q_1< Q$ is a qi embedded subgroup and $G_1=\pi^{-1}(Q_1)$. Then $Q_1$ is also convex cocompact \cite{hamen,kl-coco}. Further, by  Corollary 
 \ref{ct for pullback}(2),
$G_1$ is also hyperbolic. We continue using the notation before Theorem \ref{general fiber}.

\begin{theorem}\label{surface fiber} 
	Suppose $H<G_1$ is an infinite index, quasiconvex subgroup. Then $H$ is quasiconvex  in $G$.
\end{theorem}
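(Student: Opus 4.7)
The plan is to proceed by contradiction and reduce Theorem \ref{surface fiber} to the combination of the general ``fiber distortion'' principle (Theorem \ref{general fiber}) with the Dowdall--Kent--Leininger quasiconvexity theorem (Theorem \ref{thm-dklmr1}). The hypotheses have been set up precisely so that these two ingredients talk to each other.

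First I would verify that the setup of Theorem \ref{general fiber} is available. The exact sequence $1\to K\to G\to Q\to 1$ consists of infinite hyperbolic groups (with $K=\pi_1(\Sigma)$ and $G$ hyperbolic by the convex cocompactness of $Q$, see \cite{farb-coco,hamen,mahan-sardar}). The subgroup $Q_1<Q$ is qi embedded, and $H<G_1$ is qi embedded with infinite index in $G_1$. These are exactly the standing hypotheses for Theorem \ref{general fiber}.

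Now assume, for contradiction, that $H$ is not qi embedded in $G$. Applying Theorem \ref{general fiber} produces a \emph{finitely generated} infinite subgroup $K_1 < K\cap H$ with $[K:K_1]=\infty$ and $K_1$ distorted in $G$. At this point the argument becomes purely about subgroups of the fiber $K=\pi_1(\Sigma)$: we have a finitely generated, infinite, infinite-index subgroup of a closed surface group sitting inside $G$, and Theorem \ref{general fiber} has told us that it is distorted in $G$.

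This contradicts Theorem \ref{thm-dklmr1}. Indeed, since $Q$ is convex cocompact and $K=\pi_1(\Sigma)$ is a closed surface group, the Dowdall--Kent--Leininger theorem asserts that every finitely generated infinite-index subgroup of $K$ is quasiconvex, hence undistorted, in $G$. Applying this to $K_1$ directly contradicts the distortion produced in the previous step, completing the proof.

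The main conceptual obstacle is nothing beyond what has already been done: the technical heart of the argument is packaged into Theorem \ref{general fiber}, whose proof in turn rests on the CT map and CT lamination results for the pair $(G_1,G)$ (Corollaries \ref{ct for pullback} and \ref{lamination for pullback}) together with the coarse topological Proposition \ref{general thm}. Once Theorem \ref{general fiber} is in hand, the only additional input is the black-box fact that convex cocompactness of $Q$ forces quasiconvexity of \emph{all} finitely generated infinite-index subgroups of $K$ in $G$, which is exactly Theorem \ref{thm-dklmr1}. The entire force of the surface-group hypothesis is concentrated here.
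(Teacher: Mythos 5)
Your proposal is correct and is essentially identical to the paper's own proof: argue by contradiction, apply Theorem \ref{general fiber} to produce a finitely generated, infinite, infinite-index subgroup $K_1 < K\cap H$ that is distorted in $G$, and observe that this contradicts Theorem \ref{thm-dklmr1}. The additional verification of the standing hypotheses and the remarks on where the technical work lives are accurate but do not change the route.
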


\begin{proof}
Suppose not. Then, by Theorem \ref{general fiber}, there exists a finitely generated, infinite subgroup $K_1$ of
$K\cap H$ such that 
\begin{enumerate}
\item $[K:K_1]=\infty$.
\item $K_1$ is not quasiconvex in $G$.
\end{enumerate}
This contradicts Theorem \ref{thm-dklmr1}.
\end{proof}

%%%%%%%%%%%%%%%%%%%%%%%%%%%%%%%%%%%%%%%%%%%%%%%%%%%%%%%%%%%%%%%%%%%%%%%%%%%

\noindent {\bf Free group fibers:} Next, instead of specializing to $K=\pi_1(\Sigma)$, we specialize the exact sequence $(*)$  to the case where 
\begin{enumerate}
	\item $K= F_n$, the free group on $n$ generators
	($n>2$), 
	\item $Q$ is a subgroup of $Out(F_n)$.
\end{enumerate}
and continue with the rest of the
notation before Theorem \ref{general fiber}. We recall from \cite{dt1, hamhen} that a subgroup
$Q< Out(F_n)$  is said to be {\em convex cocompact}    if some (and hence any)
orbit of $Q$ in the free factor complex $\FF_n$ is qi embedded. We say that 
$\phi \in Out(F_n)$ is hyperbolic, if $F_n \rtimes_\phi \Z$ is hyperbolic.
Also, we say that
$Q< Out(F_n)$ is {\em  purely atoroidal} if every element of $Q$ is hyperbolic. We shall need the following:

\begin{theorem} \label{thm-free}   \cite[Theorem 7.9]{dt1}\cite[Theorem 5.14]{mahan-rafi}
Consider the exact sequence $(*)$ with $K=F_n$, $(n>1)$ and $Q$ 
purely atoroidal and convex cocompact in $Out(F_n)$. 
Let $K_1$ be a finitely generated infinite index subgroup of $K$. Then  $K_1$ is quasi-convex  in $G$.
\end{theorem}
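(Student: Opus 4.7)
The plan is to prove quasiconvexity of $K_1$ in $G$ by a Cannon-Thurston lamination argument, parallel to the Dowdall-Kent-Leininger proof (and the Mj-Rafi reproof) of Theorem \ref{thm-dklmr1} in the surface case, but substituting free-group machinery for surface-group machinery. The structural setup is already in place: since $Q$ is purely atoroidal and convex cocompact in $Out(F_n)$, the extension $G$ is hyperbolic (Dowdall-Taylor, Hamenst\"adt), and the inclusion $K = F_n \hookrightarrow G$ admits a Cannon-Thurston map by Theorem \ref{kernel ct}. Hence $\LL_G(K) \subset \partial^2 K$ is defined. Since $F_n$ is free and $K_1$ is finitely generated, $K_1$ is automatically qi embedded in $K$, so by Lemma \ref{ct elem lemma}(1)-(2) the composition $K_1 \hookrightarrow K \hookrightarrow G$ also admits a CT map.

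Next I would reduce the conclusion to a statement about leaves of $\LL_G(K)$. By Lemma \ref{ct elem lemma}(3), $K_1$ fails to be qi embedded in $G$ precisely when the CT map $\partial K_1 \to \partial G$ identifies some pair of distinct points of $\partial K_1$. Since this CT map factors through $\partial K$, such an identification corresponds exactly to a leaf of $\LL_G(K)$ with both endpoints in the limit set $\partial K_1 \subset \partial K$, i.e., a leaf carried by $K_1$ in the sense of Definition \ref{defn-carry}. Thus it suffices to show that no leaf of $\LL_G(K)$ is carried by $K_1$.

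The heart of the argument is the structural description of $\LL_G(K)$. Following Kapovich-Lustig, and implicit in Mitra's original ladder construction from \cite{mitra-ct}, the leaves of $\LL_G(K)$ can be identified with the \emph{algebraic ending laminations} of $F_n$ associated to $Q < Out(F_n)$. Convex cocompactness and pure atoroidality of $Q$ force, via Bestvina-Feighn-Handel train-track theory together with Bestvina-Reynolds / Hamenst\"adt, that every such algebraic lamination is \emph{arational}: no leaf is supported on any proper free factor of any finite index subgroup of $F_n$. By Marshall Hall's theorem, a finitely generated infinite index subgroup $K_1 < F_n$ is a free factor of some finite index subgroup of $F_n$. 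Hence $K_1$ cannot carry any leaf of an arational lamination, and $K_1$ is qi embedded in $G$.

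The principal obstacle is the structural identification in the previous paragraph: matching the CT lamination $\LL_G(K)$ with the arational algebraic laminations of $Q$, and verifying arationality itself from the hypotheses on $Q$. This is the technical core of the Dowdall-Taylor paper \cite{dt1}, using folding sequences in Outer space and the geometry of the free factor complex. The alternative Mj-Rafi approach \cite{mahan-rafi} partially bypasses this identification by working directly with the metric (graph) bundle structure of $G$ over a Cayley graph of $Q$ and deriving quasiconvexity from a flaring condition and a coarse dichotomy for orbits of $Q$ on $F_n$; either route, however, ultimately hinges on the input that convex cocompactness in the free factor complex together with atoroidality produces only arational ending laminations.
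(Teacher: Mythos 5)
This statement is not proved in the paper at all: Theorem \ref{thm-free} is imported verbatim, with the citations \cite[Theorem 7.9]{dt1} and \cite[Theorem 5.14]{mahan-rafi} serving as its proof, exactly as Theorem \ref{thm-dklmr1} is imported from \cite{dkl}. So there is no internal argument to compare yours against; the only question is whether your sketch is a faithful and sound account of how the cited results are obtained.

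As such an account it is essentially accurate, and your reduction steps are consistent with the paper's own toolkit: hyperbolicity of $G$ from \cite{dowtay}, the CT map for $K\map G$ from Theorem \ref{kernel ct}, automatic quasiconvexity of a finitely generated $K_1$ in the free group $K$, and the contrapositive of Lemma \ref{ct elem lemma}(3) reducing quasiconvexity of $K_1$ in $G$ to the statement that no leaf of $\LL_G(K)$ is carried by $K_1$ in the sense of Definition \ref{defn-carry}. But you should be clear that at this point nothing has been proved yet: the entire mathematical content of the theorem is the assertion that $\LL_G(K)$ coincides with the algebraic ending lamination of $Q$ and that this lamination is arational in the strong sense that no leaf is carried by any finitely generated infinite index subgroup of $F_n$. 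That assertion \emph{is} Theorem 7.9 of \cite{dt1} (and Theorem 5.14 of \cite{mahan-rafi}), so your argument is a proof modulo the precise results it is meant to establish. One smaller caveat: your detour through Marshall Hall's theorem requires arationality not just with respect to proper free factors of $F_n$ but with respect to free factors of arbitrary finite index subgroups; this does hold (it is the same lifting phenomenon the paper later invokes via Reynolds \cite{preynolds-gd} in the proof of Proposition \ref{prop-ctreg}), but it is not a consequence of the usual definition of arationality and needs to be justified --- in practice the references prove the ``no leaf carried by a finitely generated infinite index subgroup'' statement directly, so the Hall's theorem step buys nothing beyond restating the required input.
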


Next, suppose $Q_1< Q$ is a qi embedded subgroup and $G_1=\pi^{-1}(Q_1)$. Then $Q_1$ is purely atoroidal convex cocompact \cite{dt1,dowtay,hamhen}.  By   Corollary 
\ref{ct for pullback}(2),
$G_1$ is also hyperbolic. We continue with the notation before Theorem \ref{general fiber}.

\begin{theorem}\label{freefiber} 
Under the hypotheses of Theorem \ref{thm-free}, let $Q_1< Q$ be qi embedded and $G_1=\pi^{-1}(Q_1)$.
	Suppose $H<G_1$ is an infinite index, qi embedded subgroup. Then $H$ is qi embedded in $G$.
\end{theorem}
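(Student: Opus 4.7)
The plan is to follow the exact pattern of the proof of Theorem \ref{surface fiber}, using Theorem \ref{thm-free} in place of Theorem \ref{thm-dklmr1}. First I would verify that the hypotheses of Theorem \ref{general fiber} are met. By assumption $K=F_n$ and $Q$ is purely atoroidal convex cocompact in $Out(F_n)$, so both $K$ and $G$ (and hence $Q$) are infinite hyperbolic; $Q_1<Q$ is qi embedded by assumption, so $G_1=\pi^{-1}(Q_1)$ is hyperbolic by Corollary \ref{ct for pullback}(2); and $H<G_1$ is qi embedded of infinite index. Thus the setup of Theorem \ref{general fiber} applies verbatim.

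Next I argue by contradiction. Suppose $H$ were not qi embedded in $G$. Theorem \ref{general fiber} then produces a finitely generated infinite subgroup $K_1<K\cap H$ with $[K:K_1]=\infty$ and such that $K_1$ is distorted in $G$. Since $K=F_n$, the subgroup $K_1$ is a finitely generated free subgroup of $F_n$ of infinite index.

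Now I would invoke Theorem \ref{thm-free}: as already noted, a qi embedded subgroup $Q_1$ of a purely atoroidal convex cocompact $Q<Out(F_n)$ is again purely atoroidal and convex cocompact (by \cite{dt1,dowtay,hamhen}), so the exact sequence $1\to K\to G_1\to Q_1\to 1$ also satisfies the hypotheses of Theorem \ref{thm-free}. Applying Theorem \ref{thm-free} to the exact sequence $1\to K\to G\to Q\to 1$ itself, we conclude that $K_1$ is quasiconvex, hence qi embedded, in $G$. This directly contradicts the conclusion from Theorem \ref{general fiber} that $K_1$ is distorted in $G$, completing the proof.

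I do not expect any real obstacle: all the technical heavy lifting (existence and structure of the Cannon--Thurston map for the pair $(G_1,G)$, and the quasiconvexity of infinite index finitely generated subgroups of the free fiber) has already been assembled in Theorem \ref{general fiber} and Theorem \ref{thm-free}. The only small point to be careful about is that Theorem \ref{thm-free} requires infinite index of $K_1$ in $K$, and this is exactly what Theorem \ref{general fiber} guarantees.
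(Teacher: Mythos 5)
Your proposal is correct and follows exactly the paper's argument: the paper proves Theorem \ref{freefiber} by declaring it an exact replica of the proof of Theorem \ref{surface fiber}, i.e.\ by contradiction, invoking Theorem \ref{general fiber} to produce a finitely generated infinite-index distorted subgroup $K_1 < K\cap H$ and then contradicting Theorem \ref{thm-free}. Your additional checks (hyperbolicity of $G_1$ via Corollary \ref{ct for pullback}(2), and that $Q_1$ inherits purely atoroidal convex cocompactness) match the remarks the paper makes just before the theorem statement.
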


\begin{proof}
The proof is an exact replica of the proof of
Theorem \ref{surface fiber} with the use of Theorem \ref{thm-dklmr1} replaced by Theorem \ref{thm-free}.
\end{proof}

\noindent {\bf A counter-example:} We give an example to show that the conclusion of Theorem \ref{freefiber} fails if $Q$ is not assumed to be convex cocompact even if $G$ is hyperbolic. 
It follows from Theorem \ref{general fiber} that
 the failure is due to the
existence of an infinite index quasiconvex subgroup $K_1$ of the normal free subgroup $K$
such that $K_1$ is distorted in $G$. 
In fact, the examples
below  show that the corresponding
results of \cite{dt1,mahan-rafi} fail without the convex cocompactness assumption. We start with the following construction due to Uyanik
\cite[Corollary 1.5]{uyanik} and Ghosh \cite[Theorem 5.6]{ghosh2}.

\begin{defn}
Two automorphisms $\phi, \psi \in Out(F_n)$ are said to be \emph{commensurable}
if there  exist $m, l \neq 0$ such that $  \phi^m = \psi^l$.
\end{defn}

\begin{theorem}\label{thm-uyagh}\cite{uyanik,ghosh2}
Let $\phi \in Out(F_n)$ be a fully irreducible and atoroidal outer automorphism.   Then,  for any (not necessarily fully irreducible) atoroidal outer automorphism $\psi \in Out(F_n)$ which is not commensurable with $\phi$, there exists $N \in \mathbb{N}$ such that, for all $m, l \geq N$, the subgroup $Q=\langle \phi^m, \psi^l \rangle <
Out(F_n)$ is purely atoroidal and the corresponding extension $G$ of $F_n$ by $Q$
is hyperbolic.
\end{theorem}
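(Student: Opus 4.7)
The plan is to produce $Q=\langle \phi^m, \psi^l \rangle$ via a ping-pong argument in a hyperbolic $Out(F_n)$-complex in which both $\phi$ and $\psi$ act loxodromically with disjoint attracting/repelling fixed points, and then invoke the Dowdall--Taylor characterization of hyperbolic $F_n$-extensions. The relevant complex is the \emph{co-surface graph} $\mathcal{CS}_n$ of Dowdall--Taylor, on which an element of $Out(F_n)$ acts loxodromically if and only if it is atoroidal. Thus both $\phi$ and $\psi$ act loxodromically on $\mathcal{CS}_n$.

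First I would extract, for each of $\phi$ and $\psi$, an attracting/repelling pair of points in $\partial \mathcal{CS}_n$ (equivalently, an invariant quasi-axis). For the fully irreducible atoroidal $\phi$ this is classical (from Bestvina--Feighn type results for fully irreducibles), while for the general atoroidal $\psi$ the existence of such a loxodromic structure on $\mathcal{CS}_n$ is exactly the content of the Dowdall--Taylor theorem. Next I would use the non-commensurability hypothesis, combined with the North-South dynamics of the fully irreducible $\phi$, to show that the attracting/repelling points of $\phi$ and of $\psi$ are distinct in $\partial \mathcal{CS}_n$. This step uses Uyanik's analysis of when two atoroidal outer automorphisms can share a fixed point at infinity: sharing a fixed point forces commensurability, which is excluded.

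With disjoint fixed points in hand, a standard Tits/ping-pong argument in $\mathcal{CS}_n$ yields $N$ such that for all $m, l \geq N$ the group $Q=\langle \phi^m,\psi^l\rangle$ is free of rank $2$ and the orbit map $Q \to \mathcal{CS}_n$ is a quasi-isometric embedding whose image is quasiconvex, with every nontrivial element of $Q$ acting loxodromically on $\mathcal{CS}_n$. By the loxodromic-iff-atoroidal equivalence, this means $Q$ is purely atoroidal. Finally, by the main theorem of Dowdall--Taylor, a finitely generated subgroup of $Out(F_n)$ that is purely atoroidal and qi-embeds in $\mathcal{CS}_n$ produces a hyperbolic extension $1 \to F_n \to G \to Q \to 1$; applying this to our $Q$ gives the conclusion.

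The main obstacle is the dynamical input needed for the ping-pong when $\psi$ is only atoroidal (not fully irreducible): general atoroidal elements need not have clean source--sink dynamics on every natural $Out(F_n)$-complex, so the argument requires the correct choice of model (the co-surface graph) together with Uyanik's and Ghosh's refinements showing that non-commensurable atoroidal pairs have disjoint limit sets there. Once that dynamical setup is in place, the ping-pong and the appeal to the Dowdall--Taylor combination theorem are routine.
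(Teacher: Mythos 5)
The paper does not actually prove this statement: Theorem \ref{thm-uyagh} is quoted verbatim from Uyanik \cite[Corollary 1.5]{uyanik} and Ghosh \cite[Theorem 5.6]{ghosh2}, so there is no internal proof to compare against. Measured against the cited sources, your sketch is essentially a reconstruction of Uyanik's strategy: ping-pong to get a free, purely atoroidal Schottky subgroup, followed by the Dowdall--Taylor criterion (purely atoroidal plus qi-embedded orbit in the co-surface graph implies hyperbolic extension) to conclude. The main difference is where the ping-pong happens: Uyanik plays ping-pong on the space of projectivized geodesic currents, using the generalized north--south dynamics of atoroidal automorphisms there, and only afterwards verifies the qi-embedding into the co-surface graph via the Kapovich--Lustig intersection form; you propose to play ping-pong directly on $\mathcal{CS}_n$ using the loxodromic-iff-atoroidal equivalence. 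Your route is viable and arguably cleaner for deducing that the orbit map is a qi embedding (a Schottky argument for loxodromics with disjoint endpoint pairs in a hyperbolic graph needs no acylindricity), and pure atoroidality then falls out since every nontrivial element of the qi-embedded free group has positive translation length on $\mathcal{CS}_n$.

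The one step you assert rather than prove is the crux, and it is where full irreducibility of $\phi$ is genuinely used: that non-commensurability forces the fixed-point pairs of $\phi$ and $\psi$ in $\partial\mathcal{CS}_n$ to be disjoint. ``Sharing a fixed point forces commensurability'' is not a formal consequence of north--south dynamics; what is actually needed is that the stabilizer in $Out(F_n)$ of the attracting (or repelling) lamination/fixed point of a fully irreducible atoroidal automorphism is virtually cyclic (Bestvina--Feighn--Handel, Kapovich--Lustig), so that any $\psi$ fixing one of $\phi$'s endpoints shares a power with $\phi$. Note that this argument is asymmetric: it handles $\psi^{\pm}\in\{\phi^{+},\phi^{-}\}$ but says nothing if neither generator is fully irreducible, which is exactly why the hypothesis on $\phi$ appears in the statement. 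If you supply that stabilizer theorem explicitly, your outline closes up into a correct proof.
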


Now, let $F_n = A\ast B$ be the product of two free
factors, each of rank 3 or more. Let $\psi_A$ and $\psi_B$ be fully irreducible purely atoroidal automorphisms of $A, B$ respectively, and let
$\psi = \psi_A \ast \psi_B$. Let $\phi$ be a fully irreducible purely atoroidal automorphism of $F_n$. Theorem \ref{thm-uyagh} furnishes a 
family of purely atoroidal subgroups $Q=\langle \phi^m, \psi^l \rangle <
Out(F_n)$ for all $m, l$ large enough such that 
the corresponding extension $G$ of $F_n$ by $Q$
is hyperbolic. Now, let $Q_1 =\langle \phi^m \rangle < Q$
and $G_1 < G$ be given by $\pi^{-1} (Q_1)$, where $\pi:G \to Q$ is the natural quotient map. Similarly, let $Q_2 =\langle \psi^l \rangle < Q$
and $G_2 < G$ be given by $\pi^{-1} (Q_2)$.
Then $A<F_n<G_1$ is quasiconvex by Theorem \ref{thm-free}. But $A<F_n<G_2$ is not quasiconvex, since $A$ is distorted in $ (A \rtimes_{\psi_A^l} \Z) < G_2$.
Hence $A$ is not quasiconvex in $G$. 

In fact $A$ (or $B$) can be used 
for building more complicated examples as well. Pick any element
$\gamma \in G_1 \setminus A$. Then by the usual ping-pong argument
$A_2 = \langle A, \gamma^m\rangle $ is quasiconvex in $G_1$ for all large enough $m$. But $A_2$ is not quasiconvex in $G$ since it contains the distorted subgroup
$A$. 
Thus,
the example above can be refined to prove the following. 
\begin{prop}\label{prop-ctreg}
Let $F_n, \phi, G_1$ be as 
above. Let $H < G_1$ be an infinite quasiconvex subgroup of infinite index such that $H$ is not virtually cyclic.
Then there exists a finite index subgroup $G_1' < G_1$, so that 
\begin{enumerate}
\item $G_1' = F_m \rtimes \Z$, where the semi-direct product is given by a   positive power $\phi_1$
of a lift  of $\phi$ to a finite index subgroup $F_m$ of $F_n$,
 \item a purely atoroidal  automorphism $\psi$ of $F_m$ such that 
 $Q=\langle \psi, \phi_1 \rangle < Out(F_m)$ is free, and $G = F_m \rtimes Q$ is hyperbolic.
 \item $G_1' \cap H$ is of finite index in $H$ and is not quasiconvex in $G$.
\end{enumerate}
\end{prop}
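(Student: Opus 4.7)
The plan is to enhance the explicit example from the discussion preceding the proposition: rather than starting with an $H$ that already contains a full rank-$3$ free factor $A$, one locates a finitely generated rank-$3$ subgroup of $H\cap F_n$ and then engineers the data $F_m$, $A\ast B$, $\psi$, $G$ so that this subgroup sits inside the free factor which will get distorted. The first step is to show that $H\cap F_n$ contains a finitely generated subgroup $K_0$ of rank at least $3$. Since $H$ is quasiconvex in the hyperbolic group $G_1 = F_n\rtimes_{\phi^m}\mathbb Z$ and is not virtually cyclic, $H\cap F_n$ must be infinite (otherwise $\pi|_H:H\to\langle \phi^m\rangle$ would have finite kernel, forcing $H$ to be virtually cyclic). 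Being an infinite subgroup of a free group, $H\cap F_n$ is free; if its rank were $1$, $H$ would be a $\mathbb Z$-by-$\mathbb Z$ group, hence virtually $\mathbb Z^2$ or virtually cyclic and therefore either non-hyperbolic or virtually cyclic, contradicting quasiconvexity of $H$ in $G_1$ together with the hypothesis. Thus $H\cap F_n$ has rank $\geq 2$ and we pick $K_0$ of rank $3$ inside it.

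The main technical step is to combine Marshall Hall's theorem with residual finiteness of $F_n$ to produce a finite-index subgroup $F_m\leq F_n$ with the following properties: (a)~$F_m$ is invariant under $\phi^{mN}$ for some $N\geq 1$, yielding a lift $\phi_1 = \phi^{mN}|_{F_m}\in\operatorname{Aut}(F_m)$; and (b)~there is a free product decomposition $F_m = A\ast B$ with $A\subseteq H\cap F_m$ a free factor of rank $3$ and $\operatorname{rank}(B)\geq 3$. Marshall Hall's theorem (via Stallings foldings) produces a finite-index subgroup of $F_n$ in which $K_0$ sits as a free factor with an arbitrarily large-rank complement; passing to the characteristic core in $F_n$ gives $\phi$-invariance; one then re-applies Marshall Hall inside this $\phi$-invariant $F_m$ to a rank-$3$ subgroup of $H\cap F_m$. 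Reconciling the free-factor decomposition with $\phi$-invariance of the same $F_m$ is the hardest part of the proof.

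Next, set $\psi = \psi_A\ast\psi_B\in\operatorname{Out}(F_m)$ where $\psi_A\in\operatorname{Out}(A)$ and $\psi_B\in\operatorname{Out}(B)$ are fully irreducible and purely atoroidal (which exist because both factors have rank $\geq 3$). Then $\psi$ is purely atoroidal. Since $\psi$ preserves the free product decomposition while an appropriate power of $\phi_1$ is fully irreducible on $F_m$, the two outer classes are non-commensurable, and Theorem \ref{thm-uyagh} applied to $F_m$ yields, after replacing $\phi_1$ and $\psi$ by suitable positive powers, that $Q = \langle\phi_1,\psi\rangle\leq\operatorname{Out}(F_m)$ is free of rank $2$, purely atoroidal, and $G = F_m\rtimes Q$ hyperbolic. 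Put $G_1' = F_m\rtimes\langle\phi_1\rangle\subset G$; this is a finite-index subgroup of $G_1$, so $H\cap G_1'$ is of finite index in $H$.

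Finally, $A\subseteq H\cap G_1'$ since $A\subseteq H$ and $A\subseteq F_m\leq G_1'$. Being a finitely generated, infinite-index subgroup of the normal fiber $F_n$ of $G_1$, the subgroup $A$ is quasiconvex in $G_1$ by Theorem \ref{thm-free}; it is therefore quasiconvex in the finite-index $G_1'$, and then in $H\cap G_1'$ (both $A$ and $H\cap G_1'$ are quasiconvex in $G_1'$ with $A\subseteq H\cap G_1'$). On the other hand $A$ is exponentially distorted in $G$: the subgroup $A\rtimes_{\psi_A}\mathbb Z \leq F_m\rtimes\langle\psi\rangle\leq G$ is a hyperbolic free-by-cyclic group whose fiber $A$ is exponentially distorted, and distortion of a subgroup only grows upon passing to a larger ambient group. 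If $H\cap G_1'$ were quasiconvex in $G$, the chain $A\hookrightarrow H\cap G_1'\hookrightarrow G$ would be a composition of quasi-isometric embeddings, forcing $A$ quasiconvex in $G$ — a contradiction.
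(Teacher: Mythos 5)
Your strategy is essentially the paper's: extract a finitely generated subgroup of rank at least $3$ from $H\cap F_n$, realize it inside a free factor of a $\phi$-invariant finite-index subgroup $F_m$, form $\psi=\psi_A\ast\psi_B$, invoke Theorem \ref{thm-uyagh}, and derive the contradiction from the distortion of the factor in $A\rtimes_{\psi_A}\Z\le G$. Your opening reduction (why $H\cap F_n$ has rank $\ge 2$) and your closing contradiction (quasiconvexity of $A$ in $H\cap G_1'$ plus composition of qi embeddings versus exponential distortion of $A$ in $G$) are both correct. But the step you yourself flag as ``the hardest part'' --- reconciling the free-factor decomposition with $\phi$-invariance of the \emph{same} subgroup $F_m$ --- is a genuine gap, and the route you sketch does not close it: if you first pass to a characteristic core to gain $\phi$-invariance and then re-apply Marshall Hall inside that core, the second application returns a finite-index subgroup that is in general no longer $\phi$-invariant, so you are back where you started. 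The difficulty is self-inflicted: you are insisting that the original rank-$3$ subgroup $K_0$ itself survive as a free factor. The fix is to apply Hall once to obtain a finite-index $F_k\le F_n$ with $K_0$ a free factor, then choose a $\phi$-invariant (e.g.\ characteristic) finite-index subgroup $F_m\le F_n$ contained in $F_k$, and observe via the Kurosh subgroup theorem that $K_2:=K_0\cap F_m$ is \emph{automatically} a free factor of $F_m$. Since $K_2$ has finite index in $K_0$, it still has rank $\ge 3$ by Nielsen--Schreier, still lies in $H$, and is still quasiconvex in $G_1'$, so it serves in place of your $A$ throughout; this is exactly the order of operations in the paper's proof.

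Two smaller points. You assert that an appropriate power of the lift $\phi_1$ is fully irreducible on $F_m$; this is needed both for non-commensurability with $\psi$ and to apply Theorem \ref{thm-uyagh}, and it is not automatic --- the paper cites Reynolds \cite{preynolds-gd} (as summarized in \cite[Theorem 4.10]{mahan-rafi}) for the fact that a lift of a fully irreducible automorphism to a finite-index subgroup remains fully irreducible. You should also verify, or arrange by a further passage to finite index, that the complementary Kurosh factor $B$ has rank at least $3$ so that a fully irreducible atoroidal $\psi_B$ exists; this should be folded into the same invariance step above.
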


\begin{proof}	
Since $H < G_1$ is an infinite quasiconvex subgroup of infinite index  that  is not virtually cyclic, $H \cap F_n$ is infinite, free of rank greater than one.
Let $K_1 < H \cap F_n$ be a free subgroup of finite rank greater than 2.
Note that $K_1$ is quasiconvex in $G_1$ by Theorem \ref{freefiber}.
 Now, by Hall's theorem,
there exists a finite index subgroup  $F_k$  of $F_n$, containing $K_1$, such that $K_1$
is a free factor of $F_k$ (this is essentially the LERF property for free
groups \cite{scott-lerf}). By passing to a further finite index subgroup $F_m$ of $F_n$, we can assume that
\begin{enumerate}
\item $\phi$ lifts to an automorphism $\phi'$ of $F_m$.
\item   $K_1\cap F_m= K_2$ is a free factor of $F_m$.
\end{enumerate}
Let $F_m = K_2 * B$ be a free factor decomposition. Let $\psi_A, \psi_B$
be fully irreducible purely hyperbolic automorphisms of $K_2$, $B$ respectively as in the construction of the counterexample preceding
Proposition \ref{prop-ctreg}. (Note that we can assume, without loss of generality that $B$ has rank greater than 2 by passing to finite index subgroups if necessary.) Let $\psi_0 = \psi_A * \psi_B$ as before.

By work of Reynolds \cite{preynolds-gd} summarized in \cite[Theorem 4.10]{mahan-rafi}, a lift of a fully irreducible automorphism is 
fully irreducible.
Then Theorem \ref{thm-uyagh} furnishes a 
positive integer $N$ such that $Q=\langle (\phi')^l, \psi_0^m\rangle$ is 
a purely atoroidal free subgroup of $Out(F_m)$, and $G=F_m \rtimes Q$ is hyperbolic. Choosing $\psi = \psi_0^m$, $\phi_1 = (\phi')^l$
and $G_1' = F_m \rtimes \langle \phi_1\rangle$, it follows that $K_2$ is distorted in
$G$ as in the construction of the counterexample described before
the present proposition. Since $H$ contains $K_2$, and $K_2$ 
 (being a finite index subgroup of $K_1$) is quasiconvex in $G_1'$,
it follows that
$G_1' \cap H$ is not quasiconvex in $G$. We note in conclusion that since $[G_1:G_1']< \infty$, therefore, $[H:G_1' \cap H]< \infty$.
\end{proof}

%%%%%%%%%%%%%%%%%%%%%%%%%%%%%%%%%%%%%%%%%%%%%%%%%%%%%%%%%%%%%%%%%%%%%%%%%%%%%%%%%

\subsection{Complexes of hyperbolic groups with qi condition} 
Suppose $\YY$ is a finite connected simiplicial complex and $\GB(\YY)$ is a developable
complex of nonelementary hyperbolic groups with qi condition over $\YY$. Moreover, suppose $G=\pi_1(\GB(\YY))$
is hyperbolic. Let $\YY_1$ be a good subcomplex (as per Definition \ref{defn: good subcomplex}) 
of $\YY$, and  $G_1=\pi_1(\GB(\YY_1))$. Then by Corollary \ref{ct for pullback} $G_1$ is
hyperbolic and the inclusion $G_1\map G$ admits a CT map. \\

\noindent {\bf Similarities and differences with Theorem \ref{general fiber}:}\\
We briefly outline the similarities and differences
between Theorems \ref{thm-cx} and Theorem \ref{thm-graph} below on the one hand, and Theorem \ref{general fiber} and its consequences on the other. Formally, all three -- Theorem \ref{general fiber}
 Theorem \ref{thm-cx} (3) and Theorem \ref{thm-graph} (2) -- identify a distorted
finitely generated subgroup $K_1 \subset H \cap K$, where $H$ and $K$ have different interpretations according to context. In all three cases, $H$ is a qi embedded subgroup of $G_1$, where
\begin{enumerate}
\item $G_1 = \pi^{-1}(Q_1) $ in Theorem \ref{general fiber}, and maybe thought of geometrically as a subbundle of $G$.
\item $G_1 = \pi_1(\mathbb{G} (\YY_1))$ in Theorem \ref{thm-cx}, where $\YY_1$ is a good subcomplex of a finite complex of groups.
\item $G_1$ is the fundamental group of a \emph{connected subgraph} of a finite graph 
of groups in Theorem \ref{thm-graph}.
\end{enumerate}

Again, $K$ is
\begin{enumerate}
	\item the normal subgroup in Theorem \ref{general fiber} and may be geometrically regarded as the fiber over a point.
	\item a face group in Theorem \ref{thm-cx}, and may  be geometrically regarded as the fiber over  (the barycenter of) a face.
	\item a vertex group in  Theorem \ref{thm-graph} and may again be geometrically regarded as the fiber over a point.
\end{enumerate}

The difference between Theorems \ref{thm-cx} and Theorem \ref{thm-graph} below on the one hand, and Theorem \ref{general fiber} on the other, lie in the applications. We do not have the analogs of Theorems \ref{surface fiber} of \ref{freefiber}. The crucial missing ingredient is a robust theory of laminations in the generality 
mentioned below:

\begin{prob}\label{qn-el}
Develop a theory of  laminations along the lines of \cite{mitra-endlam,kl10, kl15,dkt,mahan-rafi}
for
\begin{enumerate}
\item Hyperbolic complexes of groups,
\item Hyperbolic trees of spaces.
\end{enumerate}
\end{prob}

Thus, the main implication of the following
theorem is that an infinite index qi embedded subgroup $H$ of $G_1$ fails to be qi embedded in $G$ only if there
is  a face group, say $G_v$, $v\in V(\YY)$, 
and a finitely generated subgroup of
  $G_v \cap H$ that is
 not qi embedded in $G$.

\iffalse
%However, we cannot make stronger statements like those of Theorem \ref{surface fiber} and Theorem \ref{freefiber}
because of the paucity of information on the CT laminations for the inclusions $G_v< G$, $v\in V(\YY)$. 
\fi

\begin{theorem}\label{thm-cx}
Suppose $H$ is an infinite index, qi embedded subgroup of $G_1$ which is distorted in $G$. 
Then the inclusion $H\map G$ admits a CT map. Further, for 
any $g\in G_1$ and any $v\in V(\YY_1)$,
 and any leaf $\alpha$ of the CT lamination $\LL_G(H)$, $\alpha$
is contained in a finite neighborhood of $gG_vg^{-1}$.

Moreover, given a leaf $\alpha$ of $\LL_G(H)$, $g\in G_1$ and $v\in V(\YY)$
there is a finitely generated subgroup $K_1<H\cap gG_vg^{-1}$ such that the following hold:\\
(1) $\alpha$ is contained in a finite neighborhood of $K_1$.\\
(2) $\Lambda_{gG_vg^{-1}}(K_1)\neq \partial (gG_vg^{-1})$. In particular, $[gG_vg^{-1}:K_1]=\infty$.\\
(3) $K_1$ is a distorted subgroup of $G$. \\ 
(4) If $K_1$ is qi embedded in $gG_vg^{-1}$ then it supports a leaf of the CT lamination $\LL_G(gG_vg^{-1})$.
\end{theorem}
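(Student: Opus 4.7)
The plan is to bootstrap from the existence theorems for CT maps and CT laminations developed in the previous section, and then to invoke Corollary \ref{general cor} as the engine that extracts the finitely generated distorted subgroup inside $H \cap gG_vg^{-1}$.

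\emph{Existence and non-triviality of the CT map for $H \to G$.} Since $\YY_1$ is good, Corollary \ref{ct for pullback}(1) furnishes a CT map $\partial G_1 \to \partial G$. Because $H$ is qi embedded in $G_1$, Lemma \ref{ct elem lemma}(2) gives an injective CT map $\partial H \to \partial G_1$, and composing via Lemma \ref{ct elem lemma}(1) yields a CT map $\partial H \to \partial G$. Non-emptiness of the lamination $\LL_G(H)$ then follows from Lemma \ref{ct elem lemma}(3), since $H$ is distorted (hence not qi embedded) in $G$.

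\emph{Leaves of $\LL_G(H)$ are carried by fiber groups.} Given a leaf $\alpha$ of $\LL_G(H)$, viewed as a bi-infinite geodesic in $H$, it is a quasigeodesic in $G_1$ via the qi embedding $H \hookrightarrow G_1$. Its endpoints $\alpha(\pm\infty)\in \partial H$ are distinct; their images in $\partial G_1$ remain distinct (by injectivity of $\partial H \to \partial G_1$) but are identified by $\partial G_1 \to \partial G$ (by naturality of the CT map obtained through Lemma \ref{ct elem lemma}(1)). Hence $\alpha$ is a leaf of $\LL_G(G_1)$, and Corollary \ref{lamination for pullback}(1) then tells us that $\alpha$ lies in a finite neighborhood of every $gG_vg^{-1}$ with $v \in V(\YY_1)$ and $g \in G_1$ (indeed any $g \in G$).

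\emph{Extracting $K_1$.} Fix a leaf $\alpha$, $g \in G_1$, and $v \in V(\YY_1)$, and set $K = gG_vg^{-1}$. I then apply Corollary \ref{general cor} to the triple $(G_1, H, K)$. The required hypotheses hold: the CT map for $G_1 \to G$ was verified in the first step; the CT map $K \to G_1$ and the limit-set equality $\Lambda_{G_1}(K) = \partial G_1$ come respectively from Corollaries \ref{cor: fiber ct} and \ref{cor: fiber limit set} applied to the complex of groups $\GB(\YY_1)$, whose fundamental group is $G_1$; and the two conditions on $H$ (qi embedded of infinite index in $G_1$, not qi embedded in $G$) are immediate from the hypotheses. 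Since $\alpha$ lies in a finite neighborhood of $K$ by the previous step, Corollary \ref{general cor} delivers a finitely generated $K_1 < H \cap K$ satisfying conclusions (1)--(4) verbatim.

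\emph{Main obstacle.} The essential technical input is the description of leaves of $\LL_G(G_1)$ provided by Corollary \ref{lamination for pullback}(1) from \cite{mbdl2}: without the guarantee that each leaf lies in a finite neighborhood of some fiber group, there would be no mechanism to feed Proposition \ref{general thm} and hence no way to produce a finitely generated subgroup of $H \cap K$. Once that input is available, the remainder of the argument is formal bookkeeping with the CT-map existence and structure theorems collected earlier; in particular, one has to take care that the goodness of $\YY_1$ is used exactly where it is needed, so that the face groups $gG_vg^{-1}$ for $v \in V(\YY_1)$ make sense and behave correctly as subgroups of $G_1$.
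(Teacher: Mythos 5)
Your proposal is correct and follows essentially the same route as the paper's own proof: establish the CT map for $H\map G$ by composing via $G_1$, observe that a leaf of $\LL_G(H)$ is a leaf of $\LL_G(G_1)$ and hence carried by each $gG_vg^{-1}$ via Corollary \ref{lamination for pullback}(1), and then feed the triple $(G_1,H,gG_vg^{-1})$ into Corollary \ref{general cor}, with the hypotheses supplied by Corollaries \ref{cor: fiber ct} and \ref{cor: fiber limit set}. The only (harmless) difference is that you spell out the injectivity argument showing $\alpha$ is a leaf of $\LL_G(G_1)$, which the paper leaves implicit.
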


\begin{proof}
 By Corollary \ref{ct for pullback}(1) the inclusion $G_1\map G$ admits a CT map.
By Lemma \ref{ct elem lemma}(1) the inclusion $H\map G_1$ admits a CT map since $H$ is a qi embedded subgroup of $G_1$.
Hence, the inclusion $H\map G$ admits a CT map too. Also by lemma \ref{ct elem lemma}(4) $\mathcal L_G(H)\neq \emptyset$
since $H$ is distorted in $G$. Next, we note that $\alpha$ is a quasigeodesic in $G_1$ too since $H$ is qi embedded in $G_1$.
Thus it is a leaf of the CT lamination $\LL_G(G_1)$. Hence, by Corollary \ref{lamination for pullback}(1) it follows that 
for all $g\in G_1$ and $v\in \YY$,
$\alpha$ is contained in a finite neighborhood of $gG_vg^{-1}$.

The remaining parts of the theorem follow immediately from Corollary \ref{general cor}. We quickly check
the various hypotheses of  Corollary \ref{general cor}. We 
have already noted that the inclusion $G_1\map G$
admits a CT map. Let $K=gG_vg^{-1}$. Clearly, $H, K$ are hyperbolic groups.  By Corollary \ref{cor: fiber ct} the
inclusion $K\map G_1$ admits a CT map. By Corollary \ref{cor: fiber limit set} $\Lambda_{G_1}(K)=\partial G_1$. Lastly
$H$ is given to be qi embedded in $G_1$ with $[G_1:H]=\infty$. 
Thus, the  hypotheses of  Corollary \ref{general cor} are satisfied,
completing the proof of this theorem.
\end{proof}

To conclude this subsection, we mention a construction due to
Min \cite[Theorem 1.1]{min}  of a graph of groups where
\begin{enumerate}
\item Each edge and vertex group is a closed hyperbolic surface 
group.
\item The inclusion map of an edge group
into a vertex group takes the edge group injectively onto a subgroup
 of finite index in the vertex group.
 \item The resulting graph of groups is hyperbolic.
\end{enumerate}

By choosing the maps in the above construction carefully,
Min furnishes examples \cite[Section 5]{min} that are
 not abstractly commensurable to a surface-by-free group. In particular, the resulting groups are not commensurable to those occurring in the
 exact sequence $(*)$ in Section \ref{sec-es}. However, since the graphs of hyperbolic groups
in this construction satisfy  property (2), i.e. the qi condition, they give rise to metric graph bundles. Min's examples from \cite{min} were generalized 
to the context of free groups in \cite{ghosh-mj}.
In the next subsection we consider
more general graphs of hyperbolic groups. 

\subsection{Graphs of groups}
 Suppose  $(\mathcal G, \mathcal Y)$ is a  graph of hyperbolic groups where $\YY$ is a finite connected graph. Suppose $(\GG, \YY_1)$ is a subgraph of groups
where $\YY_1$ is a connected subgraph of $\YY$. Let $G=\pi_1(\mathcal G, \mathcal Y)$ and $G_1=\pi_1(\mathcal G, \mathcal Y_1)$.
Suppose $G$ is hyperbolic. Then by Corollary \ref{ct for subgraph} the group $G_1$ is also hyperbolic and the inclusion $G_1\map G$ admits a CT map.
As has been the theme of this paper, the theorem below shows that the existence of an infinite index, qi embedded subgroup $H$ of $G_1$ 
which fails to be qi embedded in $G$ implies 
that there is a vertex $v\in \YY_1$ and a finitely generated subgroup of $G_v\cap H$ which is not qi embedded in $G$.

\begin{theorem}\label{thm-graph}
Suppose $H$ is a qi-embedded,
infinite index subgroup of $G_1$ such that $H$ is not qi embedded in $G$. 
Then there exists a CT map for the pair $(H,G)$. Let
$\alpha$ be a leaf of the CT lamination for the inclusion $H\map G$.
Then
there exists $y\in V(\YY_1)$, $g\in G_1$ and a finitely generated subgroup $K_1<gG_yg^{-1}\cap H$ such that the
following hold:\\
(1) $\alpha$ is contained in a finite neighborhood of $K_1$.\\
(2) $K_1$ is distorted in $G$.\\
(3) If $K_1$ is qi embedded in $gG_yg^{-1}$ then it supports a leaf of the CT lamination $\LL_G(gG_yg^{-1})$.
\end{theorem}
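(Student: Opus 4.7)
The plan is to run essentially the same strategy used for Theorem \ref{thm-cx}, modified to use the tree-of-spaces analogues of the subbundle results in place of the complex-of-groups versions. I will show in sequence: (a) existence of the CT map for $(H,G)$, (b) that any leaf $\alpha$ of $\LL_G(H)$ is actually a leaf of $\LL_G(G_1)$, so by Corollary \ref{lamination for subgraph} it is confined to a finite neighborhood of some conjugate of a vertex group, and (c) the two group-theoretic statements (2) and (3) about the finitely generated subgroup $K_1$ furnished by Proposition \ref{general thm}.

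First I would assemble the CT map. By Corollary \ref{ct for subgraph}, $G_1$ is hyperbolic and $G_1 \to G$ admits a CT map; since $H$ is qi embedded in $G_1$ it enjoys its own CT map into $G_1$ (Lemma \ref{ct elem lemma}(2)), and composing with that of $G_1 \to G$ (Lemma \ref{ct elem lemma}(1)) gives a CT map $\partial i_{H,G}\colon \partial H \to \partial G$. Because $H$ is not qi embedded in $G$, Lemma \ref{ct elem lemma}(3) supplies a leaf $\alpha$ of $\LL_G(H)$. Since $H\hookrightarrow G_1$ is a qi embedding, $\alpha$ is a quasigeodesic of $G_1$ whose two endpoints in $\partial G_1$ are identified under $\partial G_1\to \partial G$; hence $\alpha$ is a leaf of $\LL_G(G_1)$. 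Corollary \ref{lamination for subgraph} then yields $y\in V(\YY_1)$ and $g\in G_1$ with $\alpha$ contained in a finite neighborhood of $K:=gG_yg^{-1}$.

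Next I would apply Proposition \ref{general thm} with $G_1$ in the role of the ambient finitely presented group (finitely presented because hyperbolic), $H$ in the role of the qi-embedded subgroup whose Cayley graph carries $\alpha$, and $K$ as above; the connectivity hypothesis holds automatically because $\alpha$ is a path. This yields a finitely generated infinite subgroup $K_1<H\cap K$ with $\alpha$ contained in a finite neighborhood of $K_1$, giving (1). For (2), assume for contradiction that $K_1$ is qi embedded in $G$. Then $\partial K_1 \hookrightarrow \partial G$ injectively (Lemma \ref{ct elem lemma}(2)). Pick $\beta(t)\in K_1$ at bounded $G$-distance from $\alpha(t)$; by Lemma \ref{lem: prop emb}(1) the inclusion $K_1\to G$ is uniformly metrically proper, so $\beta$ is a bi-infinite quasigeodesic in $K_1$ with two distinct endpoints in $\partial K_1$. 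Their images in $\partial G$ must coincide with the $\partial G$-endpoints of $\alpha$ (they are at finite Hausdorff distance in $G$), but the latter are a single point since $\alpha$ is a leaf of $\LL_G(H)$, contradicting injectivity.

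Finally, for (3), assume $K_1$ is qi embedded in $K$. The inclusion $G_y\to G_1$ admits a CT map (the tree-of-spaces theorem of \cite{mitra-trees} applied to $G_1=\pi_1(\GG,\YY_1)$), whence $K\to G_1$, and then $K\to G$, admits a CT map. Using Lemma \ref{lem: prop emb}(1) for $K\hookrightarrow G$, the companion $\beta$ of $\alpha$ becomes a quasigeodesic in $K$; since $K_1$ is qi embedded in $K$, Lemma \ref{lem: prop emb}(2) upgrades $\beta$ to a quasigeodesic of $K_1$. Its two $\partial K_1$-endpoints, hence two distinct $\partial K$-endpoints, are identified under the CT map $\partial K\to \partial G$, exhibiting $\beta$ as a leaf of $\LL_G(K)$ carried by $K_1$, which proves (3). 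The only step requiring real care—and thus the main obstacle—is the application of Proposition \ref{general thm}: the set $A$ of $H$-vertices traversed by $\alpha$ must sit in a $D$-neighborhood of $K$ in the Cayley graph of $G_1$ (which follows from step (b) once the constants are tracked), and the connecting paths inside $N_r(A)\subset \Gamma_H$ exist because $\alpha$ itself is such a path; the verification is straightforward provided one is careful about which metric is being used at each stage.
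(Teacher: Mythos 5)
Your proposal is correct and follows essentially the same route as the paper: establish the CT map for $(H,G)$ via Corollary \ref{ct for subgraph} and Lemma \ref{ct elem lemma}, observe that a leaf of $\LL_G(H)$ is a leaf of $\LL_G(G_1)$ and invoke Corollary \ref{lamination for subgraph} to confine it near a conjugate $gG_yg^{-1}$, then apply Proposition \ref{general thm} to the triple $(G_1,H,gG_yg^{-1})$ and deduce (2) and (3) exactly as in Corollary \ref{general cor}. The only difference is cosmetic: you unwind the contradiction arguments for (2) and (3) explicitly where the paper simply cites the proof of Corollary \ref{general cor}.
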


We note that in this case 
the homomorphisms from the respective edge groups into the vertex groups are only qi-embeddings, and not necessarily quasi-isometries.
Thus the corresponding tree of metric spaces is not a metric bundle in general. Consequently Theorem \ref{thm-graph} above is not
a consequence of any of the results proved so far.

{\em Proof of Theorem \ref{thm-graph}:}
 The proof runs along the line of the proof of Corollary \ref{general cor} using Proposition \ref{general thm}.
By Corollary \ref{ct for subgraph} the group $G_1$ is hyperbolic and the inclusion $G_1\map G$ admits a CT map.
Also since $H$ is qi embedded in $G$ the inclusion $H\map G_1$ admits a CT map. Hence, by Lemma \ref{ct elem lemma}(1)
the inclusion $H\map G$ admits a CT map. Since $H$ is not qi-embedded in $G$ (by hypothesis),
 $\LL_G(H)$ is non-empty by Lemma \ref{ct elem lemma}. 
 
 Since $H$ is qi embedded in $G_1$, $\alpha$
is a quasigeodesic in $G_1$ and therefore it is a leaf of the CT lamination $\LL_G(G_1)$. Hence, by Corollary 
\ref{lamination for subgraph} there is $y\in V(\YY_1)$, $g\in G_1$, and a geodesic line $\beta$ in
$gG_yg^{-1}$, such that $Hd(\beta, \alpha)<\infty$ and $\beta$ is also a quasigeodesic in $G_1$.

We can now apply Proposition \ref{general thm} with $A=\alpha$. The triple of groups $(G_1, H, gG_yg^{-1})$
plays the role of $(G, H, K)$ in Proposition \ref{general thm}. It follows that there is a finitely generated infinite group $K_1<gG_yg^{-1}\cap H$
such that $\alpha$ is contained in a finite neighborhood of $K_1$. This verifies (1).

Conclusion (2), (3) follow
from (1) as in the proof of Corollary \ref{general cor}. We include the argument for the sake of 
completeness. We argue by contradiction by assuming $K_1$ is qi embedded in $G$. Then $K_1$ is qi embedded in $H$ as well. Hence $\alpha$ is
 a leaf of $\LL_G(H)$ supported by $K_1$. Hence $K_1$ is distorted in $G$,
 contradicting our assumption. 
 
Finally,  if $K_1$ is qi embedded in $K$ then it is hyperbolic.  Then $K_1$ supports $\beta$, which is a leaf
of the CT lamination $\LL_G(gG_yg^{-1})$. This completes the proof of the theorem. \qed  \\

\noindent {\bf Acknowledgments:} We are grateful to the anonymous referee for a careful reading of the manuscript and several helpful suggestions for improvement.

\end{document}